\documentclass[11pt, reqno]{article}

\usepackage{fullpage}
\usepackage{enumerate}
\usepackage{setspace}

\usepackage{amsmath,amsfonts,amssymb,graphics,amsthm, comment}
\usepackage{hyperref}
\hypersetup{
    colorlinks=true,
    linkcolor=blue,
    citecolor=red,
    urlcolor=blue,
    pdfborder={0 0 0}
}

\usepackage[T1]{fontenc}
\usepackage{mathabx,graphicx}

\usepackage{graphicx}
\usepackage{xcolor}
\usepackage{bbm}
\usepackage{wrapfig} 

\usepackage[font=sf, labelfont={sf,bf}, margin=1cm]{caption}

\usepackage{cleveref}
  \crefname{theorem}{Theorem}{Theorems}
  \crefname{thm}{Theorem}{Theorems}
  \crefname{lemma}{Lemma}{Lemmas}
  \crefname{lem}{Lemma}{Lemmas}
  \crefname{remark}{Remark}{Remarks}
  \crefname{prop}{Proposition}{Propositions}
\crefname{notation}{Notation}{Notations}
\crefname{claim}{Claim}{Claims}
  \crefname{defn}{Definition}{Definitions}
  \crefname{corollary}{Corollary}{Corollaries}
  \crefname{section}{Section}{Sections}
  \crefname{figure}{Figure}{Figures}
    \crefname{assumption}{Assumption}{Assumptions}

\newtheorem{thm}{Theorem}[section]

\newtheorem{lemma}[thm]{Lemma}
\newtheorem{corollary}[thm]{Corollary}
\newtheorem{prop}[thm]{Proposition}
\newtheorem{defn}[thm]{Definition}

\newtheorem{problem}[thm]{Open Problem}
\newtheorem{assumption}[thm]{Assumptions}
\numberwithin{equation}{section}

\theoremstyle{definition}
\newtheorem{remark}[thm]{Remark}

\def \b {\mathsf{bias}}

\def \eqd {\stackrel{(d)}{=}}
\def \min {\text{min}}

\def \1  {(1)}
\def \2 {(2)}
\def \ii {(i)}

\def\cI{\mathcal{I}}

\def \ve {\varepsilon}

\def\P{\mathbb{P}}
\def\E{\mathbb{E}}
\def\C{\mathbb{C}}
\def\R{\mathbb{R}}

\def\N{\mathbb{N}}
\def\D{\mathbb{D}}

\def  \p- {p\textunderscore}

\def\eps{\varepsilon}

\def\e{\text{e}}
\def\ph{\varphi}
\def \b {\boldsymbol}

\DeclareMathOperator{\gff}{GFF}

\DeclareMathOperator{\var}{Var}

\def  \ve {\varepsilon}
\DeclareMathOperator{\supp}{Support}
\DeclareMathOperator{\Var}{Var}

\newcommand{\notet}[1]{{\color{red}{[#1]}}}

\title{A characterisation of the Gaussian free field}
\author{Nathana\"el Berestycki\thanks{Supported in part by EPSRC grants EP/L018896/1 and EP/I03372X/1} \and Ellen Powell \thanks{Supported by EPSRC grant EP/H023348/1 and NCCR SwissMAP} \and Gourab Ray\thanks{Supported in part by EPSRC grant EP/I03372X/1 and by University of Victoria start-up 10000-27458.}}

\begin{document}

\maketitle
\begin{abstract}
We prove that a random distribution in two dimensions which is conformally invariant and satisfies a natural domain Markov property is a multiple of the Gaussian free field. This result holds subject only to a fourth moment assumption.
\end{abstract}

\section{Introduction}

\subsection{Setup and main result}

 The \textbf{Gaussian free field} (abbreviated GFF)
 has emerged in recent years as an object of central importance in probability theory. In two dimensions in particular, the GFF is conjectured (and in many cases proved) to arise as a universal scaling limit from a broad range of models, including the Ginzburg--Landau $\nabla \varphi$ interface model (\cite{GOS, NS, MillerGL}), the height function associated to planar domino tilings and the dimer model (\cite{Kenyon_GFF, dubedat_torsion, BLRdimers,BLRtorus,DubedatGheissari,Li}), and the characteristic polynomial of random matrices (\cite{cue,RiderVirag,gue}). It also plays a crucial role in the mathematically rigourous description of Liouville quantum gravity; see in particular \cite{DKRV, DOZZ} and \cite{DuplantierMillerSheffield} for some recent major developments (we refer to \cite{polyakovstrings} for the original physics paper). Note that the interpretations of Liouville quantum gravity in the references above are slightly different from one another, and are in fact more closely related to the GFF with Neumann boundary conditions than the GFF with Dirichlet boundary conditions treated in this paper.

 As a canonical random distribution enjoying conformal invariance and a domain Markov property, the GFF is also intimately linked to the Schramm--Loewner Evolution (SLE). In particular SLE$_4$ and related curves can be viewed as level lines of the GFF (\cite{SchrammSheffield, SchrammSheffieldDiscrete, Dubedat, PowellWu}). In fact, this connection played an important role in the approach to Liouville quantum gravity developed in \cite{DuplantierSheffield, DuplantierMillerSheffield, zipper, LQGandTBMI} (see also \cite{LQGNotes} for an introduction).

It is natural to seek an axiomatic characterisation of the GFF which could explain this ubiquity. In the present article we propose one such characterisation, in the spirit of Schramm's celebrated characterisation of SLE as the unique family of conformally invariant laws on random curves satisfying a domain Markov property \cite{schrammcharacterisation}. 

As the GFF is a random distribution (and not a random function) we will need to pay attention to the measure-theoretic formulation of the problem. We start by introducing some notations.
Let $D$ be a simply connected domain and let $C_c^\infty(D)$ be the space of smooth functions that are compactly supported in $D$ (the space of so-called \textbf{test functions}). We equip it with the topology such that $\phi_n\to 0$ if and only if there is some $M\Subset D$ containing the supports of all the $\phi_n$, and all the derivatives of $\phi_n$ converge uniformly to $0$. (Here and in the rest of the paper, the notation $M\Subset D$ means that the closure of $M$ is compact and contained in the open set $D$.)  For any two test functions $\phi_1,\phi_2$, we define $$(\phi_1,\phi_2) := \int \phi_1(z) \phi_2(z)dz,$$ and for any test function $\phi$ we call $(\phi,1)$ the \emph{mass} of $\phi$.

In order to avoid discussing random variables taking values in the space of distributions in $D$ we take the simpler and more general point of view that we have a stochastic process $h^D = (h^D_\phi)_{\phi \in C^\infty_c(D) }$ indexed by test functions and which is linear in $\phi$: that is, for any $\lambda, \mu \in \R$ and $\phi, \phi'\in C_c^\infty(D)$, $$h^D_{\lambda \phi + \mu \phi'} = \lambda h^D_\phi + \mu h^D_{\phi'},$$ almost surely. We then write, with an abuse of notation, $(h^D, \phi) = h^D_\phi$ for $\phi \in C_c^\infty(D)$. We call $\Gamma^D $ the law of the stochastic process $(h^D_\phi)_{\phi \in C_c^\infty(D)}$. Thus $\Gamma^D$ is a probability distribution on $\R^{C^\infty_c(D)}$ equipped with the product topology; by Kolmogorov's extension theorem $\Gamma^D$ is characterised by its consistent finite-dimensional distributions, i.e., by the joint law of $(h^D, \phi_1), \ldots, (h^D, \phi_k)$ for any $k \ge 1$ and any $\phi_1, \ldots, \phi_k \in C_c^\infty(D)$.


\medskip Suppose that $\Gamma:=\{ \Gamma^D\}_{D \subset \C}$ is a collection of such measures, where $D\subset \C$ ranges over all simply connected proper domains and $\Gamma^D$ is as above for each simply connected proper domain $D$. We will always denote by $\mathbb{D}$ the unit disc of the complex plane. We now state our assumptions:

\begin{assumption} Let $D \subset \C$ be a proper simply connected open domain, and let $h^D$ be a sample from $\Gamma^D$. We assume the following:

\label{ass:ci_dmp}
\begin{enumerate}[(i)]

\item \textbf{(Moments, stochastic continuity.)} For every $\phi \in C_c^\infty (D)$,
$$\E[(h^D,\phi)]=0 \;\; \text{and} \;\; \E[(h^D,\phi)^4]<\infty. $$
Moreover,  there exists a continuous bilinear form $K_2^D$ on $C_c^\infty(D)\times C_c^\infty(D)$  such that
	\[ \E[(h^D,\phi)(h^D,\phi')]=K_2^D(\phi,\phi'), \quad \quad \phi, \phi' \in C_c^\infty(D).\]

\item \textbf{(Dirichlet boundary conditions)} Suppose that $(f_n)_{n \ge 1}$ is a sequence of nonnegative, radially symmetric functions in $C_c^\infty(\D)$, with uniformly bounded mass and such that for every $M \Subset \D$,  $\supp(f_n) \cap M = \emptyset$ for all large enough $n$. Then we have $\Var((h^\D,f_n)) \to 0 \text{ as } n \to \infty.$

\item \textbf{(Conformal invariance.)} Let $f: D \to D'$ be a bijective conformal map. Then
$
\Gamma^{D}  = \Gamma^{D'}\circ f,
$
where $\Gamma^{D'} \circ f$ is the law of the stochastic process $(h^{D'}, |(f^{-1})'|^2 (\phi \circ f^{-1}))_{\phi \in C_c^\infty(D)}$.

\item \textbf{(Domain Markov property)}. Suppose $D' \subset D$ is a simply connected Jordan domain. Then  we can decompose
$
h^D=  h^{D'}_D+\ph_D^{D'}
$
where:
\begin{itemize}
	\item $ h^{D'}_D$ is independent of $\ph_D^{D'}$;
	\item $(\ph_D^{D'},\phi)_{\phi\in C_c^\infty(D)}$ is a stochastic process indexed by $C_c^\infty(D)$ that is a.s. linear in $\phi$ and such that $(\ph_D^{D'},\phi)_{\phi\in C_c^\infty(D')}$ a.s. corresponds to integrating against a harmonic function in $D'$;
	\item $((h^{D'}_D,\phi))_{\phi\in C_c^\infty(D)}$ is a stochastic process indexed by $C_c^\infty(D)$, such that $(h^{D'}_D,\phi)_{\phi\in C_c^\infty(D')}$ has law $\Gamma^{D'}$ and $(h^{D'}_D,\phi)=0$ a.s. for any $\phi$ with $\supp(\phi)\subset D\setminus D'$.
\end{itemize}
\end{enumerate}
\end{assumption}

\begin{remark}
	\label{rmk::markov_zero_outside}
	Note that in the domain Markov property, we have (by linearity) that if $D'\subset D$ is simply connected, and $\phi_1=\phi_2$ on $D'$, then $(h^{D'}_D,\phi_1)=(h^{D'}_D,\phi_2)$ almost surely.
\end{remark}

	When we discuss the domain Markov property later in the paper, we will often simply say that $$ ``\, \ph_D^{D'} \text{ is harmonic in } D'\, , \, h_D^{D'} \text{ is } 0 \text{ in } D\setminus D' \text{ and }\, h_D^{D'}\overset{(d)}{=}h^{D'}\text{ in } D'\,".$$ These statements should be interpreted as described rigorously in Assumptions \ref{ass:ci_dmp}.

\begin{remark}
	The finite fourth moment condition implies, in particular, that there exists a quadrilinear form $K_4^D$ on $(C_c^\infty(D))^{\otimes 4}$ such that for every $\phi_1,\cdots, \phi_4 \in C_c^\infty(D)$,
	$$ \E[(h^D,\phi_1)(h^D,\phi_2)(h^D,\phi_3)(h^D,\phi_4)]=K_4^D(\phi_1,\phi_2,\phi_3,\phi_4).$$
\end{remark}

\begin{lemma}\label{lem:unicity_decomposition}
The assumption of zero boundary conditions implies that the domain Markov decomposition from (iv) is unique.
\end{lemma}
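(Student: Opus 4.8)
The plan is to take two pairs of processes $(h_1,\varphi_1)$ and $(h_2,\varphi_2)$, each indexed by $C_c^\infty(D)$ and each satisfying all the bullet points of Assumption \ref{ass:ci_dmp}(iv), with $h_1+\varphi_1=h_2+\varphi_2=h^D$, and to show that $(h_1,\phi)=(h_2,\phi)$ a.s. for every $\phi$ (equivalently $\varphi_1=\varphi_2$). I would split the test functions into three regimes and treat the interior of $D'$ first, since this is where the zero boundary condition does the real work. Write $H_1,H_2$ for the random harmonic functions on $D'$ that represent $\varphi_1,\varphi_2$ on $C_c^\infty(D')$, as provided by (iv); note also that both harmonic parts have mean zero, since $\E[(h^D,\phi)]=0$ and $h_i|_{D'}\sim\Gamma^{D'}$ is centred.

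The core step recovers $H_i$ \emph{pointwise} as an explicit $L^2$-limit of $h^D$ alone. Fix $w\in D'$ and a conformal map $\psi:\D\to D'$ with $\psi(0)=w$. Take nonnegative, radially symmetric $f_n\in C_c^\infty(\D)$ of mass $1$ whose supports leave every compact subset of $\D$, and transport them to $g_n:=|(\psi^{-1})'|^2\,(f_n\circ\psi^{-1})\in C_c^\infty(D')$. By the mean value property for the harmonic function $H_i\circ\psi$ together with the change of variables $w=\psi(u)$, one gets the exact identity $(\varphi_i,g_n)=\int_{\D}(H_i\circ\psi)(u)\,f_n(u)\,du=(H_i\circ\psi)(0)=H_i(w)$ for every $n$. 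On the other hand, conformal invariance (iii) gives $(h_i,g_n)\eqd(h^{\D},f_n)$, so by the Dirichlet boundary condition (ii) and centredness $(h_i,g_n)\to 0$ in $L^2$. Since $H_i(w)=(h^D,g_n)-(h_i,g_n)$, letting $n\to\infty$ yields $H_i(w)=L^2\text{-}\lim_n (h^D,g_n)$, a limit that does not depend on $i$. Hence $H_1(w)=H_2(w)$ a.s.; ranging $w$ over a countable dense subset of $D'$ and using a.s. continuity of harmonic functions, $H_1\equiv H_2$ a.s., so $\varphi_1=\varphi_2$ and $h_1=h_2$ on $C_c^\infty(D')$.

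For test functions with $\supp(\phi)\subset D\setminus D'$ there is nothing to do: both $h_i$ vanish there by (iv), so $\varphi_1=\varphi_2=h^D$. For a general (straddling) $\phi$, I would invoke Remark \ref{rmk::markov_zero_outside}, which says that $(h_i,\phi)$ depends only on $\phi|_{D'}$, and approximate $\phi$ by $\chi_k\phi$ with smooth cutoffs $\chi_k\uparrow 1$ compactly inside $D'$. Since $\chi_k\phi\in C_c^\infty(D')$, the previous step gives $(h_1,\chi_k\phi)=(h_2,\chi_k\phi)$, while the error $(h_i,(1-\chi_k)\phi)$ is governed (through the Remark) by a function supported in a shrinking collar near $\partial D'$; showing that this error tends to $0$ in $L^2$ and passing to the limit would finish the proof.

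I expect the main obstacle to be precisely this last collar estimate: Assumption \ref{ass:ci_dmp}(ii) supplies vanishing variance only for \emph{radially symmetric} boundary-concentrating sequences, whereas the collar functions $(1-\chi_k)\phi$ are not radial, so extracting their decay requires either a domination/averaging argument reducing to the radial case or the continuity of $K_2^{D'}$ combined with the boundary assumption. The interior step is comparatively clean once one notices the key mechanism—the mean value property turns the pairing of $\varphi_i$ with a radial boundary-concentrating bump into the \emph{exact} value $H_i(w)$, so that the zero boundary condition forces this value to be the same measurable functional of $h^D$ in both decompositions.
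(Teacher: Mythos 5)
Your core argument is exactly the paper's: the paper likewise fixes $z\in D'$, transports radially symmetric, mass-one bumps $f_n$ (whose supports escape every compact of $\D$) through a conformal map $F:D'\to\D$ to obtain $g_n=|F'|^2(f_n\circ F)$, uses the mean value property to see that pairing the harmonic part against $g_n$ gives \emph{exactly} its value at $z$, and uses the Dirichlet boundary condition together with conformal invariance (and centredness) to kill the zero-boundary parts, finishing with a countable dense set of points and continuity of harmonic functions. The only cosmetic difference is that the paper runs the computation on the difference of the two decompositions --- $(\ph_D^{D'}-\tilde\ph_D^{D'},g_n)$ is constant in $n$ and equal to $\ph_D^{D'}(z)-\tilde\ph_D^{D'}(z)$, while $(h_D^{D'}-\tilde h_D^{D'},g_n)\to 0$ in probability --- whereas you recover each value $H_i(w)$ as the $L^2$-limit of $(h^D,g_n)$, a functional of $h^D$ alone; these are the same mechanism.

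Concerning the test functions that straddle $\partial D'$: the collar estimate you flag as the main obstacle is not carried out in the paper either. The paper disposes of general $\phi\in C_c^\infty(D)$ in a single sentence, combining the pointwise equality of the harmonic parts on $D'$ with Remark \ref{rmk::markov_zero_outside} and the fact that both decompositions agree with $h^D$ on test functions supported in $D\setminus D'$; no cutoff or limiting argument is given for functions supported on neither side, and indeed (as you observe) Assumption \ref{ass:ci_dmp}(ii) does not directly control non-radial collar functions at this stage of the paper. So your concern identifies a genuine elision, but it is one shared by the published proof rather than resolved by it. Note also that every subsequent invocation of this uniqueness statement (in Lemma \ref{lemma::harm_av_monotone}, Lemma \ref{lemma::kernel_function} and Proposition \ref{prop:circle_avg_pointwise_Gaussian}) uses only the pointwise identification of the harmonic part inside $D'$, which your argument establishes completely.
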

\begin{proof}
Suppose that we have two such decompositions:
\begin{equation}\label{eqn::DMP_uniqueness}
h^D = h^{D'}_D+\ph_D^{D'} =  \tilde{h}^{D'}_D+\tilde{\ph}_D^{D'}.
\end{equation}
Suppose that we have two such decompositions:
\begin{equation}\label{eqn::DMP_uniqueness}
h^D = h^{D'}_D+\ph_D^{D'} =  \tilde{h}^{D'}_D+\tilde{\ph}_D^{D'}.
\end{equation}
Pick any $z\in D'$ and let $F:D'\to \D$ be a conformal map that sends $z$ to $0$. Further, let $(f_n)_{n\ge 1}$ be a sequence of nonnegative radially symmetric, mass one functions in $C_c^\infty(\D)$, that are eventually supported outside any $K\Subset \D$, and set $g_n := |F'|^2 (f_n \circ F)$ for each $n$. Then the assumption of Dirichlet boundary conditions plus conformal invariance implies that $(h^{D'}_D-\tilde{h}^{D'}_D,g_n )\to 0$ in probability as $n\to \infty$. In turn, by (\ref{eqn::DMP_uniqueness}), this means that $(\ph_D^{D'}-\tilde{\ph}_D^{D'},g_n) \to 0$ in probability.

However, since $(\ph_D^{D'} - \tilde{\ph}_D^{D'})$ restricted to $D'$ is a.s. equal to a harmonic function, and since the $f_n$'s are radially symmetric with mass one, we have
\[ (\ph_D^{D'}-\tilde{\ph}_D^{D'},g_n) =((\ph_D^{D'}-\tilde{\ph}_D^{D'})\circ F^{-1}, f_n)=(\ph_D^{D'}-\tilde{\ph}_D^{D'})\circ F^{-1}(0)=\ph_D^{D'}(z)-\tilde{\ph}_D^{D'}(z)\] for every $n$. This implies that for each fixed $z\in D'$, $\ph_D^{D'}(z)=\tilde{\ph}_D^{D'}(z)$ a.s. Applying this to a countable dense subset of $z\in D'$, together with the fact that $h^D=\ph_D^{D'}=\tilde{\ph}_D^{D'}$ a.s. outside of $D'$, see Remark \ref{rmk::markov_zero_outside}, then implies that $\ph_D^{D'}$ and $\tilde \ph_D^{D'}$ are a.s. equal as stochastic processes indexed by $C_c^\infty(D)$.
\end{proof}


\begin{defn} \label{def::gff} A mean zero Gaussian free field $h_{\gff} =h^D_{\gff}  $
with zero boundary conditions is a stochastic process indexed by test functions $(h_{\gff}, \ph)_{\ph \in C_c^\infty(D)}$ such that:

\begin{itemize}
\item $h_{\gff}$ is a centered Gaussian field; for any $n\ge 1$ and any set of test functions $\phi_1,\cdots, \phi_n \in C_c^\infty(D)$, $((h_{\gff},\phi_1),\cdots, (h_{\gff},\phi_n))$ is a Gaussian random vector with mean ${\mathbf{0}}$;
\item for any two test functions $\phi_1,\phi_2 \in C_c^\infty(D)$,
$$
\E[(h_{\gff},\phi_1) , (h_{\gff},\phi_2)] = \int_{D} G^D(z,w) \phi_1(z)\phi_2(w)dzdw
$$
where $G^D$ is the Green's function with Dirichlet boundary conditions on $D$.
\end{itemize}
\end{defn}

It is well known and easy to check (see e.g. \cite{LQGNotes}) that Assumptions \ref{ass:ci_dmp} are satisfied for the collection of laws $\{\Gamma^D_{\gff}; D \subset \C\}$ obtained by considering the GFF, $h^D_{\gff}$, in proper simply connected domains. More generally any multiple of the GFF $\alpha h^D_{\gff}$ (with $\alpha \in \R$) will verify these assumptions. (In fact, the boundary conditions satisfied by the GFF are much stronger than what we assume: it is not just the average value of the GFF on the unit circle which is zero, but, e.g., the average value on any open arc of the unit circle.) The main result of this paper is the following converse:

\begin{thm}\label{thm::characterisation_gff}
Suppose the collection of laws $\{\Gamma^D\}_{D\subset \mathbb{C}}$ satisfy Assumptions \ref{ass:ci_dmp} and let $h^D$ be a sample from $\Gamma^D$. Then there exists $\alpha \in \R$ such that $h^D = \alpha h_{\gff}^D$ in law, as stochastic processes.
\end{thm}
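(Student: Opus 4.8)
The plan is to prove the theorem in two stages: first identify the covariance of $h^D$ as a multiple of the Green's function, and then upgrade this to the statement that $h^D$ is Gaussian. For the second stage, note that by the a.s.\ linearity of $\phi\mapsto (h^D,\phi)$ together with the Cram\'er--Wold device it suffices to show that $(h^D,\phi)$ is a centred Gaussian for \emph{every single} test function $\phi$: joint Gaussianity of $(h^D,\phi_1),\dots,(h^D,\phi_k)$ then follows by applying this to the arbitrary linear combinations $\sum_i\lambda_i\phi_i$. Combined with the first stage this will force $h^D\overset{(d)}{=}\alpha h_{\gff}^D$.

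For Stage~1 (covariance), I would first argue that the continuous bilinear form $K_2^D$ is represented by a kernel $k^D(z,w)$ which is smooth off the diagonal. This comes from the domain Markov property: arguing as in Lemma~\ref{lem:unicity_decomposition}, if $D'\subset D$ is a Jordan subdomain containing $z_0$ but not $w$, then pairing with test functions supported near $z_0$ and near $w$, and using that $\ph_D^{D'}$ is harmonic in $D'$ and independent of $h^{D'}_D$, shows that $z\mapsto k^D(z,w)$ is harmonic on $D\setminus\{w\}$. Conformal invariance then forces $k^D(z,w)=\psi(\rho_D(z,w))$ for a single universal function $\psi$, where $\rho_D$ is the conformally invariant (pseudo-hyperbolic) distance. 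Taking $D=\mathbb{D}$ and $w=0$, the harmonicity of the radial function $r\mapsto\psi(r)$ on $\mathbb{D}\setminus\{0\}$ gives $\psi(\rho)=-\alpha^2\log\rho+b$, i.e.\ $k^{\mathbb{D}}=\alpha^2 G^{\mathbb{D}}+b$ for constants $\alpha^2\ge 0$ (the sign dictated by positivity of the variance near the diagonal) and $b$. Finally the Dirichlet condition pins down $b$: testing against the radial, mass-one functions $f_n$ pushed to the boundary, one has $\int\!\!\int G^{\mathbb{D}}f_nf_n\to 0$ while $(\int f_n)^2=1$, so $\Var((h^{\mathbb{D}},f_n))\to b$, forcing $b=0$. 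Hence $K_2^D=\alpha^2\int\!\!\int G^D(z,w)\phi(z)\phi'(w)\,dz\,dw$, matching $\alpha h_{\gff}^D$.

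For Stage~2 (Gaussianity), fix $\phi$. Iterating the domain Markov decomposition along a sequence of nested Jordan domains $D=D_0\supset D_1\supset\cdots\supset D_n$ writes $(h^D,\phi)=\sum_{k=1}^n X_{n,k}$ as a sum of independent summands; choosing the decomposition fine, and using conformal invariance together with the covariance computed in Stage~1 to control the individual variances, I would arrange uniform asymptotic negligibility, $\max_k\Var(X_{n,k})\to 0$ as $n\to\infty$. A fixed random variable that is, for every $n$, such a sum of an independent uniformly negligible array is \emph{infinitely divisible}; since $(h^D,\phi)$ has finite fourth moment, its L\'evy measure $\nu$ satisfies $\kappa_4((h^D,\phi))=\int x^4\,\nu(dx)$, and because $\nu\ge0$ the law is Gaussian \emph{if and only if} its fourth cumulant vanishes. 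The entire problem is thus reduced to showing $\kappa_4((h^D,\phi))=0$.

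Establishing $\kappa_4=0$ is the step I expect to be the main obstacle. Since fourth cumulants are additive over independent summands, $\kappa_4((h^D,\phi))=\sum_{k=1}^n\kappa_4(X_{n,k})$ for every $n$, so if one can prove a \emph{scale-uniform} bound of the form $\kappa_4(X_{n,k})\le C\,\Var(X_{n,k})^2$ then $\kappa_4((h^D,\phi))\le C\,\max_k\Var(X_{n,k})\sum_k\Var(X_{n,k})\to 0$, giving $\kappa_4=0$. The difficulty is exactly this uniform control: Assumption~(i) supplies a finite fourth moment for each \emph{fixed} test function, but the summands $X_{n,k}$ range over configurations at arbitrarily fine scales, and there is no dilation symmetry (on the cylinder coordinate $t=\log(1/|z|)$ the relevant increment process has stationary independent increments but no scaling invariance) to make the ratio $\E[X_{n,k}^4]/\Var(X_{n,k})^2$ automatically bounded. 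I therefore expect the heart of the proof to be a quantitative argument---using conformal invariance, the domain Markov property, and the continuity of $K_4^D$---that propagates the per-test-function fourth-moment hypothesis into such a uniform bound (equivalently, that forces the conformally invariant connected four-point form to vanish). Everything else---the reduction to one-dimensional marginals, the identification of the covariance, and the passage from $\kappa_4=0$ to Gaussianity---is comparatively routine.
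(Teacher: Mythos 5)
Your Stage~1 follows essentially the paper's own route (harmonicity of the two-point kernel off the diagonal, conformal invariance, and the Dirichlet assumption to remove the additive constant), though you gloss over real work: one must first \emph{construct} a pointwise kernel and prove that the abstract bilinear form $K_2^D$ is represented by integration against it. In the paper this takes the circle-average construction, the a priori logarithmic bounds of Proposition~\ref{prop:K_a.s._finite}, and Lemma~\ref{lemma::circ_avg_good_approx}; that part of your sketch is recoverable. Your Stage~2, however, contains a genuine gap, and you have located it yourself: the whole argument reduces to the scale-uniform bound $\lvert \kappa_4(X_{n,k})\rvert \le C\,\Var(X_{n,k})^2$ (any exponent $2-\eta$ would do), and you do not prove it --- you only state that you \emph{expect} it to be provable. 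This is not a routine technicality; it is exactly the hard core of the theorem, and it is precisely where the Dirichlet boundary condition must enter quantitatively. The a priori estimates available from Assumption~(i) (Lemma~\ref{Variance}, Proposition~\ref{prop:K_a.s._finite}) bound fourth moments only by logarithms of scales and separations; they say nothing that compares the fourth moment of a single thin increment $X_{n,k}$ with the square of its (small) variance, and without the boundary condition such a bound is simply false --- this is how a Poissonian-type field would evade the argument.

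For comparison, the paper's treatment of this exact point is Proposition~\ref{lemma::voronoi_estimates}: for a harmonic average over a wedge of angular width $a$, evaluated at a point at distance $\eps$ from $\partial \D$, the fourth moment is $O\bigl((\eps^4/a^4 \wedge 1)\log^4 a\bigr)$; the gain $\eps^4/a^4$ over the a priori bound is extracted from the Dirichlet condition via explicit conformal maps of wedges and estimates on the four-point kernel, and it feeds into the Kolmogorov-type estimate $\E[(Y_s - Y_{s+\eps})^4] \le C \eps^{2-\eta}$. The paper then concludes by continuity plus the Dubins--Schwarz theorem rather than by infinite divisibility. Your reduction (uniformly asymptotically negligible triangular array $\Rightarrow$ infinite divisibility; finite fourth moment and L\'evy measure $\nu \ge 0$ $\Rightarrow$ Gaussian iff $\kappa_4 = 0$; additivity of cumulants over independent summands) is correct and is arguably a cleaner scaffold than the martingale argument, and the estimate it demands is of the same nature as Proposition~\ref{lemma::voronoi_estimates}, so your skeleton could in principle be completed by importing that work. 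But as submitted, the proposal establishes the theorem only modulo its hardest step.
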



\begin{remark}
Given the close relationship between the GFF and SLE, it is natural to  wonder if the characterisation Theorem \ref{thm::characterisation_gff} could be deduced from Schramm's celebrated characterisation (and discovery) of SLE curves \cite{schrammcharacterisation}. Perhaps if one is also given an appropriately defined notion of \emph{local sets} in addition to the field (see \cite{SchrammSheffield,BTLS}), one could identify these local sets as SLE type curves with some unknown parameter. However, even this would not be sufficient to identify the field as the GFF. Indeed, note that the CLE$_\kappa$ nesting fields (\cite{NestingFieldCLE}) provide examples of conformally invariant random fields coupled with SLE-type local sets, yet are only believed to be Gaussian in the case $\kappa = 4$.
\end{remark}

\subsection{Role of our assumptions}
We take a moment to discuss the role of our assumptions. The fundamental assumptions of Theorem \ref{thm::characterisation_gff} are (ii), (iii) and (iv) which cannot be dispensed with. To see that they are necessary, the reader might consider the following two examples:
\begin{itemize}
  \item The magnetisation field in the critical Ising model (\cite{CamiaGarbanNewman1, CamiaGarbanNewman2});
  \item The CLE$_\kappa$ nesting field (\cite{NestingFieldCLE}).
\end{itemize}

In both these examples, conformal invariance (or at least conformal covariance) and even a form of domain Markov property (but not exactly the one formulated here) hold; yet neither of these are the GFF (except in the second case when $\kappa = 4$). These two examples are the kind of possible counterexamples to keep in mind when considering Theorem \ref{thm::characterisation_gff} or possible variants.

\medskip The role of Assumption (i) however is more technical and is instead the result of a choice and/or limitations of our proof.

We do not know whether a fourth moment assumption is necessary. Our use of this assumption is to rule out by Kolmogorov estimates the possibility of Poissonian-type jumps. To explain the problem, the reader might think of the following rough analogy: if a centered process has independent and stationary increments, it does not follow that it is Brownian motion even if it has finite second moment; for instance, $(N_t - t)_{t \ge 0}$, where $N_t$ is a standard Poisson process satisfies these assumptions. 
See the section on open problems for more discussion.

Regarding the assumption of stochastic continuity, we point out that $(\phi, \phi') \mapsto K(\phi, \phi') = \E[ (h^D, \phi) (h^D, \phi')]$ is clearly a bilinear map. So the assumption we make is simply that this map is jointly continuous.  Another way to rephrase this assumption is to say that $\varphi \mapsto (h^D, \varphi)$ is continuous in $L^2(\P)$ (referred to as \textbf{stochastic continuity} by some authors), which seems quite basic.

\subsection{The one-dimensional case}

In one-dimension, the zero boundary GFF reduces to a Brownian bridge (see e.g. Sheffield \cite{Sheffield}). However, even in this classical setup it seems that a characterisation of the Brownian bridge along the lines we have proposed in Theorem \ref{thm::characterisation_gff} was not known. Of course we need to pay some attention to the assumptions here, since it is not the case that a GFF is scale-invariant in dimension $d\ne 2$. 
Instead, the Brownian bridge enjoys Brownian scaling.

 Let $\cI$ be the space of all closed, bounded intervals of $\R$ and assume that for each $I\in \cI$ we have a stochastic process $X^I=(X^I(t))_{t \in I }$ indexed by the points of $I$. We let $\mu^I$ be the law of the stochastic process $(X^I(t))_{t\in I}$, so that $\mu^I$ is a probability distribution on $\R^{I}$ equipped with the product topology. Similarly to the two-dimensional case, by Kolmogorov's extension theorem, $\mu^I$ is characterised by its consistent finite-dimensional distributions, i.e., by the joint law of $X^I(t_1), \ldots, X^I(t_k)$ for any $k \ge 1$ and any $t_1,\cdots, t_k\in I$.

\begin{assumption}\label{ass:BB}
We make the following assumptions.

\begin{enumerate}[(i)]

 \item {\textbf{(Tails)} For each $I$ and $t\in I$, $\E[\log^+|X^I(t)|]<\infty$.}

 \item { \textbf{(Stochastic continuity)} For each $I$ the process $(X^I(t))_{t\in I}$ is stochastically continuous: that is, $\lim_{s\to t} \P(|X^I(t)-X^I(s)|>\eps)=0$ for every $\eps>0$.}

\item \textbf{(Zero boundary condition.)} For each interval $I =[a,b]$, $X^{I}(a)=X^I(b) =0$.

\item \textbf{(Domain Markov property.)} For each $I'  =[a,b] \subset I$, conditioned on $(X^I(t))_{t \in I \setminus I'}$,  the law of $(X^I(s))_{ s \in I'}$ is the same as
$$
L(s) + \tilde X^{I'}(s) \quad ; \quad s \in I'
$$
where $L(s)$ is a linear function interpolating between $X^I(a)$ and $X^I(b)$ and $\tilde X^{I'}$ is an independent copy of $X^{I'}$.

\item \textbf{(Translation invariance and scaling.)} For any $a\in \R, c>0$
$$
(X^{I-a}(t-a))_{t \in I}   \eqd (X^{I}(t))_{t \in I}
$$
 and
 $$
 (\frac{1}{\sqrt{c}}X^{cI}(ct))_{t \in I} \eqd (X^I(t))_{t \in I}.
 $$
\end{enumerate}
\end{assumption}

Our result in this case is as follows:

\begin{thm}
  \label{thm:BB} Subject to Assumptions \ref{ass:BB}, a sample $X^I$ has the law of a multiple $\sigma$ of a Brownian bridge on the interval $I$, from zero to zero.
\end{thm}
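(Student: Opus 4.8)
The plan is to reduce everything to a single one-dimensional marginal and then to show that marginal is Gaussian; the rest is soft. By translation invariance and Brownian scaling (assumption (v)) it suffices to treat $X := X^{[0,1]}$, and the key object is the midpoint value $M := X(1/2)$.

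First I would extract a self-similar ``midpoint-displacement'' (Lévy--Ciesielski type) structure from the domain Markov property. Conditioning on $X(1/2)$ and applying the domain Markov property (iv) separately to $[0,1/2]$ and $[1/2,1]$, the zero boundary condition (iii) forces the interpolating linear functions to be the chords from $(0,0)$ to $(1/2,X(1/2))$ and from $(1/2,X(1/2))$ to $(1,0)$, so that $X$ restricted to each half is the corresponding chord plus an independent sub-bridge, with the two sub-bridges independent of each other and of $X(1/2)$. Iterating dyadically, the displacements $Z_{n,j} := X((j+\tfrac12)2^{-n}) - \tfrac12\bigl(X(j2^{-n}) + X((j+1)2^{-n})\bigr)$ are mutually independent, and by scaling (v) each satisfies $Z_{n,j} \eqd 2^{-n/2} M$. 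Consequently the value of $X$ at any dyadic point is a finite linear combination of independent scaled copies of $M$, and by stochastic continuity (ii) the value at a general point is an in-probability limit of such dyadic values. Hence once $M$ is shown to be centred Gaussian, every finite-dimensional marginal of $X$ is automatically Gaussian.

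The heart of the argument, and the step I expect to be hardest, is proving that $M$ is Gaussian. The idea is that the Brownian ($\sqrt c$) scaling pins the stability index to $2$. Concretely I would show, using the domain Markov property together with scaling, that $M$ is strictly stable: that for each $k$ there are independent copies $M_1,\dots,M_k$ of $M$ with $M \eqd k^{-1/2}(M_1 + \cdots + M_k)$. Because the normalisation is $k^{-1/2}$, the only non-degenerate solution is a centred Gaussian (the $\sqrt c$-scaling forces the stability index to be $2$, and the $2$-stable laws are exactly the Gaussians). The assumption of independent increments alone would not suffice here, as the centred Poisson process $N_t - t$ shows; it is precisely self-similarity that excludes such alternatives. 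The log-moment assumption (i) enters at this point: it provides the tightness and finiteness needed to make the limiting/fixed-point argument rigorous and to rule out improper or pathological (e.g. log-periodically modulated) solutions of the scaling functional equation. An essentially equivalent route is to ``unpin'' the bridges, extracting from $\{X^{[0,n]}\}_n$ a stochastically continuous process with stationary independent increments that is self-similar of index $1/2$, and then to invoke the classification of self-similar Lévy processes to identify it as Brownian motion; there too the log-moment is what yields tightness of the rescaled endpoints $\sqrt n\, X^{[0,1]}(T/n)$ and hence existence of the limit.

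Finally, granting that $M$ — and therefore all the $Z_{n,j}$ and all finite-dimensional marginals — are centred Gaussian, it remains to compute the covariance. Writing $v(t) := \Var(X(t))$ and using the displacement decomposition together with the conditional linearity in (iv), $v$ satisfies a self-consistency relation which, combined with the scaling $v(ct) = c\,v(t)$ across rescaled intervals, forces $\Var(X^{[0,1]}(t)) = \sigma^2\, t(1-t)$ for some $\sigma^2 \ge 0$; the same conditioning yields $\cov(X(s),X(t)) = \sigma^2(s\wedge t - st)$ for $s,t \in [0,1]$, which is $\sigma^2$ times the Brownian bridge covariance. Transporting back to a general interval $I$ by translation and Brownian scaling then identifies the law of $X^I$ as that of $\sigma$ times a Brownian bridge on $I$ from zero to zero, as claimed.
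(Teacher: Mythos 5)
Your outer scaffolding is sound: the dyadic (L\'evy--Ciesielski) decomposition via the domain Markov property is valid (the mutual independence of the displacements $Z_{n,j}$ and of $M:=X^{[0,1]}(1/2)$ does follow from conditioning on complements), and granted Gaussianity of $M$, your reduction of all finite-dimensional marginals to linear combinations of independent scaled copies of $M$, and the covariance computation, would finish the proof. The genuine gap is the central claim that the domain Markov property and scaling yield the strict stability relation $M \eqd k^{-1/2}(M_1+\cdots+M_k)$. Every decomposition the axioms produce goes \emph{downward}: the Markov property expresses finer-scale values as a chord (a multiple of coarser values) plus independent noise, so $M$ sits at the top of the hierarchy and is never itself split into independent pieces. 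Embedding $[0,1]$ in a larger interval does not help: e.g.\ $X^{[0,2]}(1/2)=\tfrac12 X^{[0,2]}(1)+\tilde M$ is, after scaling, just the level-one relation $X^{[0,1]}(1/4)\eqd \tfrac12 M + 2^{-1/2}\tilde M$ again. What the axioms do give are \emph{geometrically weighted} sums of iid copies of $M$ --- exactly the paper's identity \eqref{eqn::domain_m_decomp}, which reads $X^{[0,2^n]}(1)\eqd 2^{-n/2+1}\sum_{k=0}^{n-1}2^{k/2}M_k$ --- and no finite manipulation of such identities produces the equal-weight relation you need. (Had you the relation, the conclusion would be fine and would need no moment hypothesis: the UAN condition gives infinite divisibility and the scaling of the L\'evy measure forces it to vanish, so index-$2$ semi-stable laws are Gaussian; in particular your suggestion that the log-moment assumption is what excludes ``log-periodically modulated'' solutions misplaces where that assumption is actually used.)

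The route you describe as ``essentially equivalent'' --- unpinning and classifying a self-similar L\'evy limit --- is not equivalent to the stability claim; it is the paper's actual proof, and it is where all the work lies. The paper proves tightness of $X^{[0,2^n]}(1)$ (Lemma \ref{lemma::tightness}; this is the sole use of the log-moment assumption, via a recursion on tail probabilities), then shows that every subsequential limit of $(X^{[0,2^n]}(t))_{t\in[0,1]}$ has stationary independent increments and Brownian scaling --- which requires showing the chord corrections vanish and that the limit is subsequence-independent, the latter needing a Gaussian tail-domination argument for $X^{[0,1]}(t)$ --- and identifies the limit as $\sigma B$ by a characteristic-function computation (Lemma \ref{lem:local_BM}). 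Even then one is not done: Gaussianity of the \emph{limit} must be transferred back to the bridge itself. The paper does this with the time change $W(t)=(1+t)X^{[0,1]}\big(t/(1+t)\big)$, a martingale with independent increments admitting a continuous modification (Corollary \ref{C:contmod}), and concludes by L\'evy's characterisation of Brownian motion; alternatively, one could extract Gaussianity of each $X^{[0,1]}(t)$ from the relation $Y(t)\eqd X^{[0,1]}(t)+t\tilde Y(1)$ together with Cram\'er's decomposition theorem, after which your dyadic and covariance steps would apply. As written, however, your primary argument rests on an unproved (and, I believe, underivable-as-stated) stability relation, and your fallback omits precisely the tightness, L\'evy-property and transfer-back arguments that constitute the proof.
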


Interestingly, the proof in this case is substantially different from the planar case, and relies on stochastic calculus arguments. The definition in \cref{ass:BB} is reminiscent of the classical notion of \textbf{harness} in one dimension: roughly speaking, a square integrable continuous process such that conditionally on the process outside of any interval, the process inside has an expectation which is the linear interpolation of the data outside. If such a process is defined on the entire nonnegative halfline, then Williams \cite{Williams_harness} proved that a harness is a multiple of Brownian motion plus drift; see Mansuy and Yor \cite{MansuyYor} for a survey and extensions.  Theorem \ref{thm:BB} may therefore be seen as a generalisation of Williams' result to the case where the underlying domain is bounded, without assuming continuity and assuming only logarithmic tails (but assuming more in terms of the domain Markov property). To our knowledge, this result has not been previously considered in the literature.


\subsection{Outline}

We now summarise the structure of the proof of the main result (Theorem \ref{thm::characterisation_gff}) and explain the organisation of the paper.

Our first goal is to make sense of circle averages of the field, which exist as a result of the domain Markov property, conformal invariance and zero boundary condition (Section \ref{sec::circle_avg}). These circle averages can then fairly easily be seen to give rise to a two-point function $\tilde K_2(z_1, z_2)$ (Section \ref{sec::harm_avg}). Intuitively, the bilinear form $K_2$ in the assumption is simply the integral operator associated with this two-point function, but we do not need to establish this immediately (instead, it will follow from some estimates obtained later; see Lemma \ref{lemma::circ_avg_good_approx}). In Section \ref{sec::cor_estimates} we establish \emph{a priori} logarithmic bounds on the two-point (and four-point) functions which are needed to control errors later on. The Markov property and conformal invariance are easily seen to imply that the two point function is harmonic off the diagonal (Section \ref{sec:harmonic}). This point of view culminates in Section \ref{sec:twopointGreen}, where it is shown that the two point function is necessarily a multiple of the Green's function. (Intuitively, we rely on the fact that the Green's function is characterised by harmonicity and logarithmic divergence on the diagonal, though our proof exploits an essentially equivalent but slightly shorter route). At this point we still have not made use of our fourth moment assumption.

To conclude it remains to show that the field is Gaussian in the sense that any test function $(h, \ph)$ is a centered Gaussian random variable. This is the subject of Section \ref{sec::gaussianity} and is the most delicate and interesting part of the argument. The Gaussianity comes from an application of L\'evy's characterisation of Brownian motion, or more precisely, from the Dubins--Schwarz theorem. For this we need a certain process to be a continuous martingale, and it is only here that our fourth moment assumption is required: we use it in combination with a Kolmogorov continuity criterion and a deformation argument exploiting the form of a well-chosen family of conformal maps to prove continuity. The arguments are combined in Section \ref{sec:Concl} to conclude the proof of Theorem \ref{thm::characterisation_gff}. Finally, the last section (Section \ref{sec:BB}) gives a proof in the one-dimensional case (Theorem \ref{thm:BB}) using stochastic calculus techniques. The paper concludes with a discussion of open problems in Section \ref{S:problems}.

\paragraph{Acknowledgements:} We would like to thank Omer Angel, Juhan Aru, Chris Burdzy, Benoit Laslier, Soumik Pal and Steffen Rohde for several useful discussions. We thank Scott Sheffield for very useful comments on a preliminary draft and correcting a small mistake.  NB is especially grateful to Juhan Aru for raising the question of characterisation of the Gaussian free field with him, which eventually led to this paper. Finally, we would like to thank the associate editor and anonymous referee for many suggestions that helped us to improve the presentation of the paper.

\section{Two-point and four-point functions}

To begin with, we make sense of \emph{circle averages} of our field. These will play a key role in the proof of Theorem \ref{thm::characterisation_gff}, as we will be able to identify the law of the circle average process around a point with a one-dimensional Brownian motion.

In fact, we will define something more general. Let $\gamma$ be the boundary of a Jordan domain $D'\subseteq D$. We will, given $z\in D'$, define the harmonic average (as seen from $z$) of $h$ on $\gamma$ and will denote this average by $(h^D,\rho_z^\gamma)$. Note that since $h$ can only be tested a priori against smooth functions, and therefore not necessarily against the harmonic measure on $\gamma$, this is a slight abuse of notation. We will define the average in two equivalent ways: through an approximation procedure, and using the domain Markov property of the field.

\subsection{Circle average} \label{sec::circle_avg}
 Let $D$ be a simply connected domain such that $\mathbb{D}\subseteq D$ where $\D$ is the unit disc. We will first try to define $(h^{D}, \rho_0^{\partial \D} )$ as described above. To this end, let $\tilde\psi_0^{\delta}$ be a smooth radially symmetric function taking values in $[0,1]$, that is equal to $1$ on $A:=\{z:1-\delta\leq |z|\leq 1-\delta/2\}$ and is equal to $0$ outside of the $\delta/10$ neighbourhood of the annulus $A$. Let $\psi_0^{\delta}= \tilde{\psi}_0^\delta/\int \tilde{\psi}_0^\delta$. Then for all $\delta\in [0,1]$, since $\psi_0^\delta \in C_c^\infty(D)$, the quantity $(h^D,\psi_0^{\delta})$ is well defined. We will take a limit as $\delta\to 0$ to define the circle average (the precise definition of $\psi_0^{\delta}$ does not matter, as will become clear from the proof).

\begin{lemma}\label{lemma::circle_average_defn}
	$$\lim_{\delta \to 0}(h^D, \psi_0^{\delta}) =: (h^D,\rho_0^{\partial \D})$$ exists in probability and in $L^2(\mathbb{P})$. Moreover,
	$$ (h^D,\rho_0^{\partial \D})=\varphi_D^{\D}(0)$$
	where $h^D=h_D^{\D}+\varphi_D^{\D}$ is the domain Markov decomposition of $h^D$ in $\D$ described in Assumptions \ref{ass:ci_dmp}.
\end{lemma}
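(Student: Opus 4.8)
The plan is to establish two things: first, that the limit $\lim_{\delta\to 0}(h^D,\psi_0^\delta)$ exists in $L^2(\P)$ (and hence in probability), and second, that this limit equals $\varphi_D^{\D}(0)$, the value at the origin of the harmonic function appearing in the domain Markov decomposition of $h^D$ in $\D$. I would begin by invoking the decomposition from Assumption \ref{ass:ci_dmp}(iv) with $D'=\D$: write $h^D=h^{\D}_D+\varphi_D^{\D}$, where the two pieces are independent, $\varphi_D^{\D}$ is harmonic in $\D$, and $h^{\D}_D$ restricted to $\D$ has law $\Gamma^{\D}$ (the ``zero boundary'' part). Testing against $\psi_0^\delta$, which is supported in $\D$ for all small $\delta$, splits the problem into the two corresponding contributions.

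For the harmonic part, the key observation is that $\psi_0^\delta$ is radially symmetric with total mass one and is supported in a thin annulus shrinking to $\partial\D$. Since $\varphi_D^{\D}$ is (a.s.) harmonic in $\D$, the mean value property gives that integrating the harmonic function against any radially symmetric, mass-one function supported on a circle of radius $r<1$ returns exactly $\varphi_D^{\D}(0)$; averaging over radii with the profile $\psi_0^\delta$ therefore yields $(\varphi_D^{\D},\psi_0^\delta)=\varphi_D^{\D}(0)$ exactly, for every $\delta$ small enough that the support lies in $\D$. This is the same computation already used in the proof of Lemma \ref{lem:unicity_decomposition}, and so the harmonic contribution is constant in $\delta$ and contributes precisely the claimed limit $\varphi_D^{\D}(0)$.

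It therefore remains to show that the ``bulk'' contribution $(h^{\D}_D,\psi_0^\delta)$ tends to $0$ in $L^2(\P)$ as $\delta\to 0$. Here I would use Assumption \ref{ass:ci_dmp}(ii) (Dirichlet boundary conditions): the functions $\psi_0^\delta$ are nonnegative, radially symmetric, of mass one, and are eventually supported outside any fixed $M\Subset\D$ (since their support concentrates at $\partial\D$ as $\delta\to 0$). Since $h^{\D}_D$ restricted to $\D$ has law $\Gamma^{\D}$, the Dirichlet boundary assumption applies directly and gives $\Var((h^{\D}_D,\psi_0^\delta))\to 0$. As $(h^{\D}_D,\psi_0^\delta)$ is centered by Assumption \ref{ass:ci_dmp}(i), vanishing variance is exactly $L^2(\P)$ convergence to $0$. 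Combining the two pieces and using independence, $(h^D,\psi_0^\delta)=\varphi_D^{\D}(0)+(h^{\D}_D,\psi_0^\delta)\to\varphi_D^{\D}(0)$ in $L^2(\P)$, which proves both assertions simultaneously.

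The main obstacle, and the point requiring the most care, is justifying that the sequence $(\psi_0^\delta)$ genuinely satisfies the hypotheses of the Dirichlet boundary assumption along a sequence tending to $0$, rather than merely for one fixed profile: one must check the uniform mass bound (here mass is exactly one by normalisation) and, crucially, the ``eventually supported outside every $M\Subset\D$'' condition, which follows because $\supp(\psi_0^\delta)\subset\{\,1-\delta-\delta/10\le |z|\le 1\,\}$ retreats to the boundary. A secondary subtlety is that the Dirichlet assumption is stated for $\Gamma^{\D}$ on $\D$, whereas $h^{\D}_D$ is the bulk part of the decomposition of $h^D$; this is handled by the last bullet of Assumption \ref{ass:ci_dmp}(iv), which states precisely that $(h^{\D}_D,\phi)_{\phi\in C_c^\infty(\D)}$ has law $\Gamma^{\D}$, so the assumption transfers directly.
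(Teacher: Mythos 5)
Your proposal is correct and follows essentially the same route as the paper's own proof: the domain Markov decomposition $h^D=h_D^{\D}+\varphi_D^{\D}$, the mean value property giving $(\varphi_D^{\D},\psi_0^\delta)=\varphi_D^{\D}(0)$ exactly for small $\delta$, and Assumption (ii) applied to $h_D^{\D}\overset{(d)}{=}h^{\D}$ to kill the variance of the bulk term. The two subtleties you flag at the end (the hypotheses of the Dirichlet assumption, and the transfer of law via the last bullet of the Markov property) are exactly the points the paper's proof relies on.
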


\begin{proof}
	We write $(h^D,\psi_0^{\delta})=(h_D^{\D},\psi_0^{\delta})+(\varphi_D^{\D}, \psi_0^{\delta})$ using the domain Markov decomposition. Note that because $\psi_0^\delta$ is radially symmetric with mass $1$, and is supported strictly inside $\D$ for each $\delta$, by harmonicity $(\varphi_D^{\D},\psi_0^{\delta})$ must be constant and equal to $\varphi_D^\D(0)$. Thus, we need only show that
	$$\lim_{\delta \to 0 } \Var((h_D^{\D},\psi_0^{\delta}))=0.$$
	However this follows from the fact that $h^{\D}_D\overset{(d)}{=} h^\D$ has zero boundary conditions (see the definition in Assumptions \ref{ass:ci_dmp}), since for any $M\Subset \D$, $\psi_0^\delta$ is supported outside of $M$ for small enough $\delta$ and is radially symmetric. Note that the rate of convergence of the variance to $0$ is uniform in the choice of domain $D$.
\end{proof}

\begin{remark} We could have simply defined $(h^D,\rho_0^{\partial \D} ):=\varphi_D^{\D}(0)$ as above. The reason we use the definition in terms of limits is so that later we are able to estimate its moments.
\end{remark}

\subsection{Harmonic average}\label{sec::harm_avg}
Now, let $D'\subset D$ be a Jordan domain bounded by a curve $\gamma$. Given $z\in D'$, also let $f:D'\to \mathbb{D}$ be the unique conformal map sending $z\mapsto 0$ and with $f'(z)>0$. We define $$\hat{\psi}_z^\delta:= |f'|^2\, (\psi^\delta_0\circ f) $$ and then set
$$ (h^D,\rho_z^\gamma):= \lim_{\delta \to 0 } (h^D,\hat{\psi}_z^\delta)$$ which we know exists in $L^2$ and in probability by the same argument as in the proof of Lemma \ref{lemma::circle_average_defn} (note that by conformal invariance, $(h_D^{D'}, \hat{\psi}_z^\delta)$ is equal to $(h^\D, \psi_0^\delta)$ in law if $h^D=h_D^{D'}+\varphi_D^{D'}$ is the domain Markov decomposition of $h^D$ in $D'$).
 Again, we could have simply defined the harmonic average to be equal to $\varphi_D^{D'}(z)$.

 It is clear that the harmonic average is always a random variable with mean $0$. We record here another useful property:

\begin{lemma}\label{lemma::harm_av_monotone}
Suppose $D''\subset D'\subset D$ are Jordan domains and $z\in D''$. Then
$$ \E[(h^D,\rho_z^{\partial D''})^2]\geq \E[(h^D,\rho_z^{\partial D'})^2] \;\; \text{and} \;\; \E[(h^D,\rho_z^{\partial D''})^4]\geq \E[(h^D,\rho_z^{\partial D'})^4].$$
\end{lemma}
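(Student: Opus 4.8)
The plan is to show that shrinking the contour from $\partial D'$ to $\partial D''$ adds an \emph{independent}, mean-zero term to the harmonic average, so that the two inequalities follow from expanding $(X+Y)^2$ and $(X+Y)^4$ for independent centred $X,Y$. Concretely, I would set $Y := (h^D,\rho_z^{\partial D'}) = \varphi_D^{D'}(z)$, where $h^D = h_D^{D'} + \varphi_D^{D'}$ is the domain Markov decomposition of $h^D$ in $D'$, and $X := (h_D^{D'}, \rho_z^{\partial D''})$, the harmonic average (as seen from $z$) of the field $h_D^{D'}$ across $\partial D''$. This is well defined since $h_D^{D'}$ has law $\Gamma^{D'}$ in $D'$ and $D'' \Subset D'$, and it has mean zero by the construction in Section \ref{sec::harm_avg}.

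The crux is the additive identity
$$(h^D,\rho_z^{\partial D''}) = X + Y.$$
To prove it I would use the approximations $\hat{\psi}_z^\delta = |f'|^2(\psi_0^\delta \circ f)$, with $f : D'' \to \D$ the conformal map sending $z \mapsto 0$, so that $(h^D,\rho_z^{\partial D''}) = \lim_{\delta \to 0}(h^D,\hat{\psi}_z^\delta)$ in $L^2$. For $\delta$ small enough $\hat{\psi}_z^\delta$ is supported inside $D'' \Subset D'$, so the domain Markov decomposition in $D'$ gives $(h^D,\hat{\psi}_z^\delta) = (h_D^{D'},\hat{\psi}_z^\delta) + (\varphi_D^{D'},\hat{\psi}_z^\delta)$. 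The first term converges to $X$ in $L^2$ by definition. For the second, since $\varphi_D^{D'}$ restricted to $D'$ is a genuine harmonic function, a change of variables $u = f(w)$ turns $(\varphi_D^{D'},\hat{\psi}_z^\delta)$ into $(\varphi_D^{D'}\circ f^{-1}, \psi_0^\delta)$; and because $\psi_0^\delta$ is radially symmetric with mass one and $\varphi_D^{D'}\circ f^{-1}$ is harmonic on $\D$, the mean value property gives $(\varphi_D^{D'}\circ f^{-1})(0) = \varphi_D^{D'}(z) = Y$ for every such $\delta$. Passing to the limit yields the identity.

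Next I would establish independence of $X$ and $Y$. By construction $X = \lim_{\delta}(h_D^{D'}, \hat{\psi}_z^\delta)$ is measurable with respect to $\sigma(h_D^{D'})$, while $Y = \varphi_D^{D'}(z)$ is measurable with respect to $\sigma(\varphi_D^{D'})$; since $h_D^{D'}$ and $\varphi_D^{D'}$ are independent by the domain Markov property (Assumptions \ref{ass:ci_dmp}(iv)), it follows that $X$ and $Y$ are independent. Both are centred, so expanding and using independence together with $\E[X] = \E[Y] = 0$ gives $\E[(X+Y)^2] = \E[X^2] + \E[Y^2] \ge \E[Y^2]$ and $\E[(X+Y)^4] = \E[X^4] + 6\,\E[X^2]\E[Y^2] + \E[Y^4] \ge \E[Y^4]$, which are exactly the two claimed inequalities.

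The main obstacle I anticipate is making the decomposition of $(h^D,\hat{\psi}_z^\delta)$ and the passage to the $\delta \to 0$ limit fully rigorous: one must verify that $\hat{\psi}_z^\delta$ is genuinely supported in $D'$ for all small $\delta$ (so that both the $D'$-decomposition and the harmonic mean value computation apply), and that the convergence $(h_D^{D'},\hat{\psi}_z^\delta) \to X$ holds in a mode strong enough to combine with the exact identity $(\varphi_D^{D'},\hat{\psi}_z^\delta) = Y$. A secondary technical point is the finiteness of the fourth moments $\E[X^4]$ and $\E[Y^4]$ entering the expansion; this is implicit in the statement and can be imported from the moment control established elsewhere, after which the inequalities are immediate.
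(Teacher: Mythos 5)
Your proof is correct and takes essentially the same approach as the paper: decompose $h^D$ in $D'$, write $(h^D,\rho_z^{\partial D''})$ as the sum of the two independent centred random variables $\varphi_D^{D'}(z)$ and the harmonic average of $h_D^{D'}$ over $\partial D''$ (your $X$ is exactly the paper's $\varphi_{D'}^{D''}(z)$), and expand the second and fourth moments. The only cosmetic difference is that the paper obtains the additive identity by iterating the domain Markov decomposition inside $D''$ and invoking uniqueness (Lemma \ref{lem:unicity_decomposition}), whereas you rederive it directly from the $\delta\to 0$ approximation with $\hat{\psi}_z^\delta$; both routes are valid.
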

\begin{proof}
Let $h^D=h_D^{D'}+\varphi_D^{D'}$ according to the domain Markov decomposition of $h^D$ in $D'$. Then we have that $(h^D,\rho_z^{\partial D'})=\varphi_D^{D'}(z)$. We can also decompose $h_D^{D'}$ inside $D''$ as $h_D^{D'}=h_{D'}^{D''}+\varphi_{D'}^{D''}$, which means (by uniqueness of the decomposition) that $(h^D,\rho_z^{\partial D''})=\varphi_D^{D'}(z)+\varphi_{D'}^{D''}(z)$. By independence of $\varphi_D^{D'}(z)$ and $\varphi_{D'}^{D''}(z)$, and the fact that the harmonic average has mean $0$, the result follows.
\end{proof}

Later on in the proof we will also use some alternative approximations to $(h^D,\rho_z^\gamma)$, as different approximations will be useful in different contexts.

\subsection{Circle average field} Now consider a general simply connected domain $D$. By the above construction, we can define $$h_\eps^D(z):=(h^D,\rho_z^{\partial B_z(\eps)})=\ph_D^{B_z(\eps)}(z)$$ for all $z \in D$ and all $\eps$ small enough, depending on $z$. We call this the \emph{circle average field}. It will be important to know that this is a good approximation to our field when $\eps$ is small. To show this, we will first need the following lemma.

\begin{lemma}\label{lemma::kernel_function}
	For $z_1\ne z_2$ distinct points in $D$,
	$$\tilde{K}_2^D(z_1,z_2):=\lim_{\eps \to 0 }\E[h_\eps^D(z_1)h_\eps^D(z_2)]$$ exists.
	Moreover, for any $D_1,D_2
\subset D$ Jordan subdomains such that $D_1\cap D_2=\emptyset$ and $z_1\in D_1, z_2\in D_2$, we have $$\tilde{K}_2^D(z_1,z_2)=\mathbb{E}[\ph_D^{D_1}(z_1)\ph_D^{D_2}(z_2)]$$
	
\end{lemma}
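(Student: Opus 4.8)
The plan is to prove the stronger statement that the correlation $\E[h_\eps^D(z_1)h_\eps^D(z_2)]$ is in fact \emph{exactly} equal to $\E[\varphi_D^{D_1}(z_1)\varphi_D^{D_2}(z_2)]$ for every $\eps$ small enough that $\overline{B_{z_1}(\eps)}\subset D_1$ and $\overline{B_{z_2}(\eps)}\subset D_2$. Both the existence of the limit and its identification then follow at once, the expression being independent of $\eps$; and since the left-hand side does not refer to $D_1,D_2$ at all, this also forces the right-hand side to be the same for every admissible choice of disjoint Jordan domains, which is the ``moreover'' part.

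First I would fix such domains $D_1\ni z_1$, $D_2\ni z_2$ and record the two domain Markov decompositions $h^D=h_D^{D_1}+\varphi_D^{D_1}$ and $h^D=h_D^{D_2}+\varphi_D^{D_2}$, recalling that $h_D^{D_i}$ is independent of $\varphi_D^{D_i}$. Composing the decomposition in $D_1$ with the further decomposition of $h_D^{D_1}$ inside $B_{z_1}(\eps)$ and invoking uniqueness (Lemma \ref{lem:unicity_decomposition}), exactly as in the proof of Lemma \ref{lemma::harm_av_monotone}, gives the key identity
$$h_\eps^D(z_1)=\varphi_D^{B_{z_1}(\eps)}(z_1)=\varphi_D^{D_1}(z_1)+A_\eps^{(1)},\qquad A_\eps^{(1)}:=\varphi_{D_1}^{B_{z_1}(\eps)}(z_1),$$
and symmetrically $h_\eps^D(z_2)=\varphi_D^{D_2}(z_2)+A_\eps^{(2)}$ with $A_\eps^{(2)}:=\varphi_{D_2}^{B_{z_2}(\eps)}(z_2)$. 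Here each $A_\eps^{(i)}$ is a harmonic average of the \emph{centred} field $h_D^{D_i}$, so it is $\sigma(h_D^{D_i})$-measurable and has mean zero.

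The crux is then two independence observations. On one hand $A_\eps^{(1)}$ is $\sigma(h_D^{D_1})$-measurable; on the other hand $h_\eps^D(z_2)$ is $\sigma(\varphi_D^{D_1})$-measurable. Indeed $h_\eps^D(z_2)$ is the $L^2$-limit of $(h^D,\hat\psi_{z_2}^\delta)$, where the approximating functions $\hat\psi_{z_2}^\delta$ are supported in a thin annulus inside $B_{z_2}(\eps)\subset D_2\subseteq D\setminus D_1$, so by the ``zero outside $D_1$'' part of the domain Markov property $(h_D^{D_1},\hat\psi_{z_2}^\delta)=0$ and hence $(h^D,\hat\psi_{z_2}^\delta)=(\varphi_D^{D_1},\hat\psi_{z_2}^\delta)$. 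Since $h_D^{D_1}\perp\varphi_D^{D_1}$ and $A_\eps^{(1)}$ is centred, this yields $\E[A_\eps^{(1)}h_\eps^D(z_2)]=0$, so $\E[h_\eps^D(z_1)h_\eps^D(z_2)]=\E[\varphi_D^{D_1}(z_1)\,h_\eps^D(z_2)]$. Running the symmetric argument (now decomposing in $D_2$) shows that $\varphi_D^{D_1}(z_1)$ is $\sigma(\varphi_D^{D_2})$-measurable while $A_\eps^{(2)}$ is centred and $\sigma(h_D^{D_2})$-measurable, so $\E[\varphi_D^{D_1}(z_1)A_\eps^{(2)}]=0$ and therefore $\E[\varphi_D^{D_1}(z_1)h_\eps^D(z_2)]=\E[\varphi_D^{D_1}(z_1)\varphi_D^{D_2}(z_2)]$. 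Chaining the two equalities gives the claimed exact identity.

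The step I expect to require the most care is the measurability claim that $h_\eps^D(z_2)$ (and likewise $\varphi_D^{D_2}(z_2)$) depends only on $\varphi_D^{D_1}$: this rests on verifying that the supports of the test functions defining these harmonic averages genuinely lie in $D\setminus D_1$, and on the fact that the limit defining the harmonic average is taken in $L^2$ (and in probability), so that $\sigma(\varphi_D^{D_1})$-measurability is preserved under it. The other point to check is that composing the two nested Markov decompositions really produces a valid decomposition of $h^D$ in $B_{z_1}(\eps)$ into a harmonic part plus an independent zero-boundary part, so that uniqueness applies and $A_\eps^{(1)}$ is genuinely a functional of $h_D^{D_1}$ alone; this is precisely the manipulation already performed in Lemma \ref{lemma::harm_av_monotone}. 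Beyond these, no estimates are needed — in particular, the fourth moment assumption plays no role in this lemma.
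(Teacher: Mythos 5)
Your proof is correct and takes essentially the same route as the paper: the paper also establishes the exact identity $\E[h_\eps^D(z_1)h_\eps^D(z_2)]=\E[\ph_D^{D_1}(z_1)\ph_D^{D_2}(z_2)]$ for all sufficiently small $\eps$, using nested domain Markov decompositions, uniqueness of the decomposition, and independence plus centring to kill the cross terms. The only difference is bookkeeping: the paper packages the argument in a symmetric three-term decomposition $h^D=h_D^{D_1}+h_D^{D_2}+\ph$ with pairwise independent terms (justifying the key measurability claim by Remark \ref{rmk::markov_zero_outside}), whereas you eliminate the two cross terms sequentially, justifying the needed measurability directly from the supports of the approximating test functions -- which is, if anything, a slightly more explicit rendering of the same point.
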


\begin{proof}
	Let $D_1,D_2$ be as above and write, by the domain Markov property, $$h^D=h^{D_1}_D+\ph^{D_1}_D \text{ and } h^D=h^{D_2}_D+\ph^{D_2}_D,$$  so that for $ \ph=\ph^{D_1}_D-h^{D_2}_D=\ph^{D_2}_D-h^{D_1}_D$  we have
	\begin{equation}\label{eqn::2ball_dmp}
	h^D=h^{D_1}_D+h^{D_2}_D+\ph.
 \end{equation} By definition of the domain Markov property, we can see that $(\ph,\phi)_{\phi\in C_c^\infty(D)}$ is a stochastic process that a.s. corresponds to a harmonic function when restricted to $\phi$ in $ C_c^\infty(D_1)$ or $C_c^\infty(D_2)$: in fact, we have that $\ph=\ph_D^{D_1}$ in $D_1$ and $\ph=\ph_D^{D_2}$ in $D_2$.
Note that $h^{D_2}_D$ is measurable with respect to $\varphi_D^{D_1}$ by Remark \ref{rmk::markov_zero_outside} (and conversely with the indices 1 and 2 switched), so the three terms in (\ref{eqn::2ball_dmp}) are pairwise independent.

Now let $\eps<\min\{|z_1-z_2|/2, d(z_1,\partial D), d(z_2,\partial D)\}$. Choosing $D\supset D_1\supset B_\eps(z_1)$ and $D\supset D_2\supset B_\eps(z_2)$, this means (also using uniqueness of the domain Markov decomposition) that $\ph_D^{B_\eps(z_i)}=\ph+\ph_i$ for $i=1,2$ where $\ph,\ph_1,\ph_2$ are pairwise independent and centered (indeed, $\ph_1,\ph_2$ are measurable with respect to $h^{D_1}_D, h^{D_2}_D$ respectively). This implies that  $$\E[h^D_\eps(z_1)h^D_\eps(z_2)]=\E[(\ph+\ph_1)(z_1)(\ph+\ph_2)(z_2)]=\E[\ph(z_1)\ph(z_2)]=\E[\ph_D^{D_1}(z_1)\ph_D^{D_2}(z_2)].$$ Hence the limit as $\eps\to 0$ exists, and we also see that it is equal to $\E[\ph_D^{D_1}(z_1)\ph_D^{D_2}(z_2)]$ for any $D_1,D_2$ as in the statement of the Lemma.
	\end{proof}

	Similarly, we have the following:
\begin{lemma}\label{lemma::fourpointexpression}
	For $z_1,z_2,z_3,z_4$ be pairwise distinct points in $D$. Then
	$$\tilde{K}_4^D(z_1,z_2,z_3,z_4):=\lim_{\eps \to 0 }\E[h_\eps^D(z_1)h_\eps^D(z_2)h^D_\eps(z_3)h^D_{\eps}(z_4)]$$ exists. Moreover, for any $D_1,D_2,D_3,D_4
		\subset D$ Jordan subdomains such that $D_i\cap D_j=\emptyset$ for every $1\le i\ne j\le 4$ and $z_i\in D_i$ for $1\le i \le 4$, we have $$\tilde{K}_4^D(z_1,z_2,z_3,z_4)=\mathbb{E}[\ph_D^{D_1}(z_1)\ph_D^{D_2}(z_2)\ph_D^{D_3}(z_3)\ph_D^{D_4}(z_4)]$$
\end{lemma}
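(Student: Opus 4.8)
The plan is to mirror the proof of Lemma \ref{lemma::kernel_function}, the only genuinely new feature being that one must now control the $2^4$ terms arising when a fourfold product is expanded, rather than a single cross term. The skeleton is identical: fix pairwise disjoint Jordan subdomains $D_1,\dots,D_4$ with $z_i\in D_i$, apply the domain Markov property in each $D_i$ to write $h^D=h_D^{D_i}+\ph_D^{D_i}$, and set $\ph:=h^D-\sum_{i=1}^4 h_D^{D_i}$. Since $h_D^{D_j}$ vanishes against test functions supported in $D\setminus D_j$ and the $D_i$ are disjoint, $\ph$ coincides with $\ph_D^{D_i}$ on $D_i$; in particular $\ph$ is harmonic in each $D_i$ and $\ph(z_i)=\ph_D^{D_i}(z_i)$. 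The key measurability observations, exactly as in Lemma \ref{lemma::kernel_function} and Remark \ref{rmk::markov_zero_outside}, are that each $h_D^{D_i}$ is measurable with respect to $h^D$ restricted to $D_i$, and that for $i\ne j$ this restriction equals $\ph_D^{D_j}$ on $D_i$ (because $D_i\subset D\setminus D_j$). Consequently, for every fixed $j$, both $\ph$ and all $h_D^{D_i}$ with $i\ne j$ are measurable with respect to $\ph_D^{D_j}$, which is independent of $h_D^{D_j}$ by the domain Markov property.

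Next I would fix $\eps<\tfrac12\min_{i\ne j}|z_i-z_j|$ small enough that $B_\eps(z_i)\subset D_i$, decompose $h_D^{D_i}$ further at $B_\eps(z_i)$, and invoke uniqueness of the domain Markov decomposition (Lemma \ref{lem:unicity_decomposition}) to identify $\ph_D^{B_\eps(z_i)}=\ph_D^{D_i}+\ph_{D_i}^{B_\eps(z_i)}$ on $B_\eps(z_i)$, just as in the two-point case. Writing $\ph_i:=\ph_{D_i}^{B_\eps(z_i)}$, this yields $h_\eps^D(z_i)=\ph(z_i)+\ph_i(z_i)$, where $\ph_i(z_i)$ is a harmonic average of $h_D^{D_i}$, hence centered and measurable with respect to $h_D^{D_i}$.

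Expanding the product,
$$\E\Big[\prod_{i=1}^4 h_\eps^D(z_i)\Big]=\sum_{S\subseteq\{1,2,3,4\}}\E\Big[\prod_{i\in S}\ph(z_i)\prod_{i\notin S}\ph_i(z_i)\Big],$$
any term with $S\ne\{1,2,3,4\}$ contains a factor $\ph_j(z_j)$ with $j\notin S$; this factor is measurable with respect to $h_D^{D_j}$, while every other factor in the term is measurable with respect to $\ph_D^{D_j}$, so independence together with $\E[\ph_j(z_j)]=0$ forces the term to vanish. (Integrability throughout follows from Hölder's inequality and the finite fourth moment assumption, together with the monotonicity of Lemma \ref{lemma::harm_av_monotone}.) Only $S=\{1,2,3,4\}$ survives, giving $\E[\prod_i h_\eps^D(z_i)]=\E[\prod_i\ph(z_i)]=\E[\prod_i\ph_D^{D_i}(z_i)]$, a quantity that does not depend on $\eps$; hence the limit exists and equals this expression, and the same value is obtained for every admissible choice of $D_1,\dots,D_4$.

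The main obstacle is the probabilistic bookkeeping needed to kill all fifteen cross terms simultaneously. The crucial simplification making this tractable is that, relative to the single index $j$ singled out in each term, the remaining factors depend only on $\ph_D^{D_j}$ whereas $\ph_j(z_j)$ depends only on the independent complement $h_D^{D_j}$; thus one never needs independence among the $\ph_i$ themselves, only the basic splitting $h_D^{D_j}\perp\ph_D^{D_j}$ supplied by the domain Markov property and the measurability reductions above.
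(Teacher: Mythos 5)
Your proof is correct and is exactly the argument the paper intends: the paper gives no separate proof of Lemma \ref{lemma::fourpointexpression}, saying only that it follows ``similarly'' to Lemma \ref{lemma::kernel_function}, and your write-up is precisely that extension (decompose at the four disjoint $D_i$, identify $h_\eps^D(z_i)=\ph(z_i)+\ph_i(z_i)$ via uniqueness of the Markov decomposition, and kill the cross terms). In particular, your observation that pairwise independence of the pieces no longer suffices for a fourfold product, and that one should instead single out $j\notin S$ and use the splitting $h_D^{D_j}\perp\ph_D^{D_j}$ with every other factor measurable with respect to $\ph_D^{D_j}$, is the right way to make the ``similarly'' rigorous.
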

	
It will also be convenient in what follows to have an alternative, ``hands-on'' way of approximating $\tilde{K}_2^D$ and $\tilde{K}_4^D$, which corresponds to directy testing the field against smooth test functions (rather than using the slightly abstract notion of circle averages).
		
		\begin{defn}[Mollified field] \label{remark::K_alt_def} Let $\phi$ be a smooth radially symmetric function, supported in the unit disc, and with total mass $1$. Let $\phi^z_\eps(\cdot)=\eps^{-2}\phi(\frac{|\cdot-z|}{\eps})$ so that $\phi_{\eps}^z$ is smooth, radially symmetric, has  mass $1$, and is supported in $B_z(\eps)$. Define $\tilde{h}_\eps^D(z):=(h^D,\phi_{\eps}^z)$. Then by the domain Markov property again, we see that we can \emph{equivalently} write
	$$\tilde{K}_2^D(z_1,z_2)=\lim_{\eps \to 0 }\E[\tilde{h}_\eps^D(z_1)\tilde{h}_\eps^D(z_2)]$$ and
	$$\tilde{K}_4^D(z_1,z_2,z_3,z_4)=\lim_{\eps \to 0 }\E[\tilde h_\eps^D(z_1)\tilde h_\eps^D(z_2)\tilde h^D_\eps(z_3)\tilde h^D_{\eps}(z_4)].$$
	\end{defn}

Note that here we do not have $\tilde{h}_\eps^D(z)=\varphi_D^{B_z(\eps)}(z)$ for every $\eps$ (because $\phi_\eps^z$ has support inside $B_z(\eps)$), but we still have for small enough $\eps$ (depending on $z_1,z_2$) that $\E[\tilde{h}_\eps^D(z_1)\tilde{h}_\eps^D(z_2)]=\tilde{K}_2^D(z_1,z_2)$.
	
\subsection{Properties of the two point kernel}	\label{sec:harmonic}
We can now prove some of the important properties of our two point kernel $\tilde{K}_2^D$. Namely:

\begin{prop}[Harmonicity] \label{lemma::harmonic kernel} For any $x\in D$, $\tilde{K}_2^D(x,y)$, viewed as a function of $y$, is harmonic in $D\setminus \{x\}$. \end{prop}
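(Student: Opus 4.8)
The plan is to exploit the representation of $\tilde K_2^D(x,y)$ from Lemma~\ref{lemma::kernel_function} together with the mean value property, which characterises harmonic functions. Fix $x\in D$ and regard $y\mapsto \tilde K_2^D(x,y)$ as a function on $D\setminus\{x\}$. To verify harmonicity it suffices to check that this function satisfies the mean value property on every sufficiently small circle centred at a point $y_0\ne x$, i.e.\ that its value at $y_0$ equals its average over $\partial B_{y_0}(r)$ for all small $r$. The key idea is that the circle-average structure already built into the definition of $\tilde K_2^D$ makes this average property almost immediate once we choose disjoint Jordan subdomains appropriately.

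Concretely, I would fix $y_0\ne x$ and a radius $r$ small enough that $B_{y_0}(r)$ stays inside $D$ and away from $x$. Choose disjoint Jordan domains $D_1\ni x$ and $D_2 = B_{y_0}(r)$, so that Lemma~\ref{lemma::kernel_function} gives
\[
\tilde K_2^D(x,y)=\E\!\left[\ph_D^{D_1}(x)\,\ph_D^{B_{y_0}(r)}(y)\right]
\]
for every $y\in B_{y_0}(r)$. Now the crucial structural fact is that $y\mapsto \ph_D^{B_{y_0}(r)}(y)$ is, on the ball $B_{y_0}(r)$, almost surely a \emph{harmonic} function (this is exactly the content of the harmonicity clause in the domain Markov property, Assumption~\ref{ass:ci_dmp}(iv)). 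In particular it satisfies the mean value property pathwise:
\[
\ph_D^{B_{y_0}(r)}(y_0)=\frac{1}{2\pi}\int_0^{2\pi}\ph_D^{B_{y_0}(r)}\!\left(y_0+\rho e^{i\theta}\right)\,d\theta
\]
for any $\rho<r$. Multiplying by $\ph_D^{D_1}(x)$ and taking expectations, then interchanging expectation with the $\theta$-integral, transfers the mean value property to $\tilde K_2^D(x,\cdot)$:
\[
\tilde K_2^D(x,y_0)=\frac{1}{2\pi}\int_0^{2\pi}\tilde K_2^D\!\left(x,\,y_0+\rho e^{i\theta}\right)\,d\theta .
\]
Since $y_0\ne x$ and $\rho<r$ are arbitrary (with $r$ small), this mean value property holds on all small circles, and by Weyl's lemma / the standard converse to the mean value property, $y\mapsto\tilde K_2^D(x,y)$ is harmonic in $D\setminus\{x\}$.

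The step requiring the most care is the interchange of expectation and the angular integral, which I would justify by Fubini using the second-moment control: the integrand is dominated (via Cauchy--Schwarz) by $\E[\ph_D^{D_1}(x)^2]^{1/2}\,\E[\ph_D^{B_{y_0}(r)}(y)^2]^{1/2}$, and these harmonic-average second moments are finite and locally bounded by Assumption~\ref{ass:ci_dmp}(i) (indeed they are controlled by the monotonicity in Lemma~\ref{lemma::harm_av_monotone} and the existence of $\tilde K_2^D$). I would also note that one must check measurability/continuity of $\theta\mapsto\ph_D^{B_{y_0}(r)}(y_0+\rho e^{i\theta})$ as a map into $L^2(\P)$, which again follows from the $L^2$-existence of the harmonic averages established in Lemma~\ref{lemma::circle_average_defn} and Section~\ref{sec::harm_avg}. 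Everything else is a routine application of the representation formula; the real content is simply that harmonicity of the random field $\ph_D^{D_2}$ inside $D_2$, a pathwise property, survives the averaging against an independent factor because the two sit in disjoint domains and are therefore independent.
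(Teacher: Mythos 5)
Your proposal follows essentially the same route as the paper's own proof (Lemma \ref{lemma::harmonic_kernel_proof} plus \cite[\S 2.2, Theorem 3]{evans}): represent $\tilde K_2^D(x,\cdot)$ near a point $y_0\ne x$ via Lemma \ref{lemma::kernel_function} with two disjoint subdomains, use the almost sure harmonicity of $\ph_D^{D_2}$ inside $D_2$ to get a pathwise mean value property, interchange expectation and the circle average, and conclude by the converse to the mean value property. The paper takes $D_2=B_{y_0}(\eta')$ with $\eta'>\eta$ and averages over $\partial B_{y_0}(\eta)$, exactly parallel to your choice $\rho<r$ inside $B_{y_0}(r)$.

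The one step you gloss over is the final appeal to ``Weyl's lemma / the standard converse to the mean value property'': that theorem needs some a priori regularity of $\tilde K_2^D(x,\cdot)$ (continuity, or at least measurability and local integrability), which is not known at this stage. The paper closes precisely this hole: it observes that the mean value property itself, together with local boundedness, bootstraps regularity --- convolving $\tilde K_2^D(x,\cdot)$ with a smooth radially symmetric mollifier of small support produces a $C^\infty$ function which, by the mean value property, coincides with $\tilde K_2^D(x,\cdot)$ near $y_0$; hence the kernel is $C^2$ off the diagonal and the Evans theorem applies legitimately. Your Cauchy--Schwarz bound (introduced to justify Fubini) supplies exactly the local boundedness needed to run this mollification, so the repair is short, but it must be stated. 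Separately, your closing remark that $\ph_D^{D_1}(x)$ and $\ph_D^{D_2}$ are \emph{independent} because $D_1$ and $D_2$ are disjoint is false: their covariance is the very kernel you are computing (it is $h_D^{D_1}$, $h_D^{D_2}$ and the common far-field harmonic part that are pairwise independent, cf.\ the proof of Lemma \ref{lemma::kernel_function}). Fortunately nothing in your argument actually uses this independence, so the error is cosmetic rather than structural.
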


\begin{prop}[Conformal invariance]
	\label{lemma:conformal_invariance}
Let $f:D\to f(D)$ be a conformal map. Then for any distinct $x\ne y$ in $D$
	$$\tilde{K}_2^D(x,y)=\tilde{K}_2^{f(D)}(f(x),f(y)).$$
\end{prop}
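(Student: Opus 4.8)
The plan is to exploit the conformal naturality built into the very definition of the harmonic average, and then transfer to the two-point kernel using the freedom in Lemma~\ref{lemma::kernel_function} to compute $\tilde K_2$ against \emph{arbitrary} disjoint Jordan subdomains. Throughout I would use assumption~(iii) in the form of a coupling: sampling $h^{f(D)}\sim \Gamma^{f(D)}$ and setting $(h^D,\phi):=(h^{f(D)},|(f^{-1})'|^2(\phi\circ f^{-1}))$ for $\phi\in C_c^\infty(D)$ produces a process with law $\Gamma^D$; since $\tilde K_2^D$ is a functional of the law alone, I may compute it on this coupled probability space.

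The first and main step is a transformation rule for harmonic averages. Fix a Jordan subdomain $U\subset D$ with $z\in U$, let $g:U\to\D$ be the conformal map with $g(z)=0$, $g'(z)>0$, and recall $\hat\psi_z^\delta=|g'|^2(\psi_0^\delta\circ g)$. Under the coupling,
$$(h^D,\hat\psi_z^\delta)=\left(h^{f(D)},\,|(f^{-1})'|^2(\hat\psi_z^\delta\circ f^{-1})\right).$$
Writing $\tilde g:=g\circ f^{-1}:f(U)\to\D$, the chain rule gives $|g'\circ f^{-1}|^2=|\tilde g'|^2/|(f^{-1})'|^2$, so the Jacobian factors cancel:
$$|(f^{-1})'|^2(\hat\psi_z^\delta\circ f^{-1})=|\tilde g'|^2(\psi_0^\delta\circ \tilde g).$$
Now $\tilde g$ is conformal from $f(U)$ to $\D$ with $\tilde g(f(z))=0$; its derivative at $f(z)$ need not be positive, but because $\psi_0^\delta$ is \emph{radially symmetric} the expression $|\tilde g'|^2(\psi_0^\delta\circ\tilde g)$ is unchanged upon post-composing $\tilde g$ with a rotation. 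Hence it is exactly the test function $\hat\psi_{f(z)}^\delta$ associated to the domain $f(U)$, and therefore $(h^D,\hat\psi_z^\delta)=(h^{f(D)},\hat\psi_{f(z)}^\delta)$ for every $\delta$. Letting $\delta\to0$ (the limits hold in $L^2$ by Lemma~\ref{lemma::circle_average_defn}) yields
$$(h^D,\rho_z^{\partial U})=(h^{f(D)},\rho_{f(z)}^{\partial f(U)})$$
as random variables on the coupled space. I expect this radial-symmetry cancellation, which is what makes the harmonic average conformally invariant despite the rotational ambiguity of the uniformizing map, to be the real crux of the argument.

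With the transformation rule in hand, the conclusion is immediate. Take $x\ne y$ in $D$ and $\eps<|x-y|/2$ small enough that $B_x(\eps),B_y(\eps)$ are disjoint and compactly contained in $D$. Applying the above with $U=B_x(\eps)$ and $U=B_y(\eps)$ gives, on the coupled space,
$$\E[h_\eps^D(x)\,h_\eps^D(y)]=\E\!\left[(h^{f(D)},\rho_{f(x)}^{\partial f(B_x(\eps))})\,(h^{f(D)},\rho_{f(y)}^{\partial f(B_y(\eps))})\right].$$
Since $f$ is injective and conformal, $f(B_x(\eps))$ and $f(B_y(\eps))$ are disjoint Jordan subdomains of $f(D)$ containing $f(x)$ and $f(y)$ respectively. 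Lemma~\ref{lemma::kernel_function}, applied in $f(D)$ with these two subdomains, identifies the right-hand side with $\tilde K_2^{f(D)}(f(x),f(y))$ — in particular it does not depend on $\eps$. Letting $\eps\to 0$ on the left then gives $\tilde K_2^D(x,y)=\tilde K_2^{f(D)}(f(x),f(y))$, as required. The only mild technical point is that $f$ does not map circles to circles, so $f(B_x(\eps))$ is no longer a ball; this is harmless precisely because Lemma~\ref{lemma::kernel_function} permits arbitrary disjoint Jordan subdomains, and $f(B_x(\eps))$ is genuinely such a domain (being bounded by the Jordan curve $f(\partial B_x(\eps))$).
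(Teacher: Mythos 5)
Your proposal is correct and follows essentially the same route as the paper: the paper's proof also computes $\tilde K_2$ on both sides via Lemma~\ref{lemma::kernel_function} applied to disjoint Jordan subdomains and their images under $f$, and bridges the two with conformal invariance of the field. The only difference is one of detail: where the paper simply invokes ``conformal invariance of $h^D$'' to equate $\mathbb{E}[\ph_{f(D)}^{f(D_x)}(f(x))\ph_{f(D)}^{f(D_y)}(f(y))]$ with $\mathbb{E}[\ph_D^{D_x}(x)\ph_D^{D_y}(y)]$, you spell that step out via the coupling, the Jacobian cancellation under $g\circ f^{-1}$, and the radial symmetry of $\psi_0^\delta$ absorbing the rotational ambiguity of the uniformizing map --- a careful justification of exactly what the paper leaves implicit.
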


\begin{proof}[Proof of Proposition \ref{lemma::harmonic kernel}] This is a direct consequence of the following Lemma (Lemma \ref{lemma::harmonic_kernel_proof}) and \cite[\S 2.2, Theorem 3]{evans}. \end{proof}

\begin{lemma}\label{lemma::harmonic_kernel_proof} Fix $x\in D$. Then $\tilde{K}_2^D(x,\cdot)\in C^2(D\setminus \{x\})$. Moreover, for any $\eta>0$ and $y\in D$ such that $|x-y|\wedge d(y,\partial D)>\eta$:
	\begin{equation}\label{eqn::mean_value}\tilde{K}_2^D(x,y)=\frac{1}{|\partial B_y(\eta)|} \int_{\partial B_y(\eta)} \tilde{K}_2^D(x,w) \, dw.\end{equation}
	
\end{lemma}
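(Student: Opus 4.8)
The plan is to establish the mean value property \eqref{eqn::mean_value} directly from the probabilistic description of $\tilde K_2^D$ provided by Lemma \ref{lemma::kernel_function}, exploiting the harmonicity of the field $\ph_D^{D'}$ inside $D'$ together with the freedom to choose the Jordan subdomains $D_1, D_2$. The $C^2$ regularity will then follow as a soft consequence: the mean value property \eqref{eqn::mean_value} holds for all sufficiently small radii $\eta$, and it is a classical fact that a \emph{locally integrable} (indeed continuous) function satisfying the mean value property on all small spheres is smooth and harmonic. So the real content is to prove \eqref{eqn::mean_value}; let me sketch that.

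Fix $x \in D$ and $y$ with $|x-y| \wedge d(y,\partial D) > \eta$. By Lemma \ref{lemma::kernel_function} I may compute $\tilde K_2^D(x,y)$ using \emph{any} pair of disjoint Jordan subdomains $D_1 \ni x$, $D_2 \ni y$, via the formula $\tilde K_2^D(x,y) = \E[\ph_D^{D_1}(x)\ph_D^{D_2}(y)]$. The natural choice is $D_1 = B_x(\eta')$ for some small $\eta'$ and $D_2 = B_y(\eta)$, which are disjoint because $|x-y| > \eta$ and both sit inside $D$. Now I would like to average over $w \in \partial B_y(\eta)$. First I would fix a point $w_0$ on the sphere and observe, using Lemma \ref{lemma::kernel_function} again with $D_2$ replaced by a small Jordan domain $D_2'$ around $w_0$ disjoint from $D_1$, that $\tilde K_2^D(x,w_0) = \E[\ph_D^{D_1}(x)\,\ph_D^{D_2'}(w_0)]$. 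The key structural fact is that, conditionally on (equivalently, as a function of) the field outside $B_y(\eta)$, the harmonic function $\ph_D^{B_y(\eta)}$ restricted to $B_y(\eta)$ satisfies the mean value property in the \emph{classical deterministic} sense: for the genuinely harmonic function $\ph_D^{D_2}$ we have the pointwise identity
\[
\ph_D^{D_2}(y) = \frac{1}{|\partial B_y(\eta)|}\int_{\partial B_y(\eta)} \ph_D^{D_2}(w)\,dw
\]
almost surely, since $y$ is the center of the ball $B_y(\eta)=D_2$. Multiplying by $\ph_D^{D_1}(x)$, taking expectations, and interchanging expectation with the (finite) integral over the sphere then yields
\[
\tilde K_2^D(x,y) = \frac{1}{|\partial B_y(\eta)|}\int_{\partial B_y(\eta)} \E\bigl[\ph_D^{D_1}(x)\,\ph_D^{D_2}(w)\bigr]\,dw.
\]
It remains to identify $\E[\ph_D^{D_1}(x)\,\ph_D^{B_y(\eta)}(w)]$ with $\tilde K_2^D(x,w)$ for $w \in \partial B_y(\eta)$, which again follows from Lemma \ref{lemma::kernel_function}: the value of the harmonic average $\ph_D^{D_2}$ evaluated at the boundary point $w$ agrees with the harmonic average as seen through any small disjoint Jordan domain around $w$, by the monotone/consistency structure of the harmonic averages (Lemma \ref{lemma::harm_av_monotone} and the uniqueness of the Markov decomposition), giving $\E[\ph_D^{D_1}(x)\,\ph_D^{D_2}(w)] = \tilde K_2^D(x,w)$.

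\textbf{The main obstacle} I expect is the measure-theoretic bookkeeping in this last identification step, namely justifying that $\ph_D^{D_2}$ evaluated on the \emph{boundary} sphere $\partial B_y(\eta)$ — rather than at interior points — can be legitimately paired with $\ph_D^{D_1}(x)$ to reproduce $\tilde K_2^D(x,\cdot)$, and that the Fubini interchange of expectation and spherical integral is valid. The latter needs an integrability bound ensuring $\E\bigl[|\ph_D^{D_1}(x)|\,|\ph_D^{D_2}(w)|\bigr]$ is bounded uniformly in $w$ over the compact sphere; this is supplied by Cauchy--Schwarz together with the second-moment control on harmonic averages from Lemma \ref{lemma::harm_av_monotone} (monotonicity lets me dominate $\E[\ph_D^{D_2}(w)^2]$ by the second moment of a fixed harmonic average at a slightly larger scale). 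A clean way to sidestep the boundary-evaluation subtlety is to first prove \eqref{eqn::mean_value} with $\partial B_y(\eta)$ replaced by averaging $\ph_D^{D_2}(w)$ over $w$ ranging in an interior sphere $\partial B_y(\eta')$ with $\eta' < \eta$, where all quantities are manifestly interior harmonic averages, and then pass to the limit $\eta' \uparrow \eta$ using the continuity of $\tilde K_2^D(x,\cdot)$ off the diagonal, which in turn follows from the stochastic continuity assumption and the mollified-field representation of Definition \ref{remark::K_alt_def}.
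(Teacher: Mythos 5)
Your proposal is, in substance, the paper's own proof: represent $\tilde K_2^D$ via Lemma \ref{lemma::kernel_function} using a fixed pair of disjoint Jordan domains, use the a.s.\ \emph{deterministic} mean value property of the harmonic function $\ph_D^{D_2}$, exchange expectation with the spherical integral, and re-identify the integrand as $\tilde K_2^D(x,\cdot)$ by Lemma \ref{lemma::kernel_function} again; the $C^2$ statement is then deduced from \eqref{eqn::mean_value} by mollification, exactly as in the paper. The one divergence is your handling of the sphere, and there your detour is unnecessary. The obstacle you correctly flag is real: with $D_2=B_y(\eta)$ the quantity $\ph_D^{D_2}(w)$ for $w\in\partial D_2$ is not defined pointwise (outside $D_2$ this object is just the distribution $h^D$), so your displayed mean value identity over $\partial B_y(\eta)$ is not meaningful as written. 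But the fix is not a limiting argument: since the hypothesis gives the \emph{strict} inequality $|x-y|\wedge d(y,\partial D)>\eta$, you can choose $\eta'\in(\eta,\,|x-y|\wedge d(y,\partial D))$ and take $D_2=B_y(\eta')$; then every $w\in\partial B_y(\eta)$ is an \emph{interior} point of $D_2$, Lemma \ref{lemma::kernel_function} applies verbatim, and the a.s.\ mean value property over the interior sphere needs no boundary regularity. This is precisely what the paper does. Your alternative repair (interior spheres $\partial B_y(\eta'')$, $\eta''<\eta$, then $\eta''\uparrow\eta$ using continuity of $\tilde K_2^D(x,\cdot)$) is correct but redundant: the interior-sphere identity you prove \emph{is} the lemma after relabelling the radii, so no limit and no separate continuity argument for $\tilde K_2^D$ are required. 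Your Fubini justification (Cauchy--Schwarz plus second-moment control) is fine, but note that Lemma \ref{lemma::harm_av_monotone} dominates $\E[\ph_D^{D_2}(w)^2]$ by the harmonic average over a \emph{smaller} subdomain around $w$ (smaller domains have larger variance), e.g.\ a small circle average bounded via Lemma \ref{Variance}, not by an average ``at a slightly larger scale'' as you wrote.
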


\begin{proof} In fact, the first regularity statement follows from (\ref{eqn::mean_value}). Indeed, take $y\in D\setminus \{x\}$, pick $\eta<|x-y|\wedge d(y,\partial D)$, and also take a smooth radially symmetric function $\phi$ that has mass $1$ and is supported on $B_0(\eta/2)$. Set $f(z)=\int_{D} \tilde{K}^D_2(x,w)  \phi(z-w)dw $. Then $f\in C^\infty(U)$ where $U=B_y(\eta/2)$. Moreover, $f(z)=\tilde{K}_2^D(x,z)$ for $z\in U$ by \eqref{eqn::mean_value}. This implies that $f$ is twice continuously differentiable at $y$.
	
	Thus, we only need to prove (\ref{eqn::mean_value}). However, this follows almost immediately from the definition of $\tilde{K}_2^D$. Take $\eta$ and $y$ as in the statement, and pick $\eps>0,\eta'>\eta$ such that $B_x(\eps)$ and $B_y(\eta')$ lie entirely in $D$ and are disjoint.

Then by Lemma \ref{lemma::kernel_function} we have
		\[\tilde{K}_2^D(x,y)=\mathbb{E}[\ph_D^{B_x(\eps)}(x)\ph_D^{B_y(\eta')}(y)] \text{ and }  \tilde K_2^D(x,w)=\mathbb{E}[\ph_D^{B_x(\eps)}(x)\ph_D^{B_y(\eta')}(w)] \;\;\forall w\in \partial B_y(\eta).\]
		
This allows us to conclude, since
	$$\int_{\partial B_y(\eta)} \tilde{K}_2^D(x,w) dw= \int_{\partial B_y(\eta)} \mathbb{E}[\ph_D^{B_x(\eps)}(x)\ph_D^{B_y(\eta')}(w)] \, dw = \mathbb{E}[ \ph_D^{B_x(\eps)}(x) \int_{\partial  B_y(\eta)}\ph_D^{B_y(\eta')}(w)\, dw]$$ which by harmonicity of $\ph$ in $B_y(\eta')$ is equal to $|\partial  B_y(\eta)|$ times
	\begin{equation*}[\ph_D^{B_x(\eps)}(x)\ph_D^{B_y(\eta')}(y)]=\tilde{K}_2^D(x,y). \qedhere \end{equation*}
\end{proof}

\begin{proof}[Proof of Proposition \ref{lemma:conformal_invariance}]
 Let $D_x\ni x$, $D_y \ni y$ be two Jordan subdomains of $D$ such that $D_x\cap D_y=\emptyset$. Then we have
 	\[ \tilde K^{f(D)}_2(f(x),f(y))=\mathbb{E}[\ph_{f(D)}^{f(D_x)}(f(x))\ph_{f(D)}^{f(D_y)}(f(y))]=\mathbb{E}[\ph_D^{D_x}(x)\ph_D^{D_y}(y)]=\tilde K^D_2(x,y)\]
 where we have used Lemma \ref{lemma::kernel_function} in the first and final equalities, and conformal invariance of $h^D$ in the second.

\end{proof}

%

\subsection{Estimates on two- and four-point functions}\label{sec::cor_estimates}
Before we can proceed to identify the two-point function as the Green's function, we need to derive some bounds on $\tilde{K}_2^D$ and $\tilde{K}_4^D$. For any set of pairwise distinct points $z_1,\ldots, z_k \in D$, we define
\begin{equation}
R(z_i;z_1,\ldots,z_k) := \min_{j \neq i} |z_i - z_j|  \wedge R(z_i,D)/10 \label{eq:rzw}
\end{equation}
where $R(z,D)$ is the conformal radius of $z$ in the domain $D$. We also set
	\begin{align*}
	l_2 (z,w)^2 &:= \log\left(\frac{R(z,D)}{R(z;z,w)}\right) \log \left(\frac{R(w,D)}{R(w;z,w)}\right)\\
	l_4(z_1,\ldots,z_4)^4 &:= \prod_{i=1}^4 \left[\log^2 \left(\frac{R(z_i,D)}{R(z_i;z_1,\ldots,z_4)}\right)+ \log \left(\frac{R(z_i,D)}{R(z_i;z_1,\ldots,z_4) }\right)\right].
	\end{align*}
The following logarithmic bounds are the main results of this section. We will use these repeatedly in the sequel, in order to justify the use of Fubini's theorem and the dominated convergence theorem. We will also use the four-point function bound in Section \ref{sec::gaussianity} to prove the estimate described in Proposition \ref{lemma::voronoi_estimates}, which is essential to showing Gaussianity of the process.

\begin{prop}\label{prop:K_a.s._finite}
Fix D and let $z_1,\cdots, z_4 \in D$. Then there exists some universal constant $C>0$ such that for $\eps$ with $B_{z_i}(\eps)\subset D$ for all $i$:
	\begin{equation}
	\label{eqn::mollifiedfield_2pointbound}
	\E\left[\prod_{i=1,2} \tilde{h}^D_\eps(z_i)\right]  \leq C \big(l_2(z_1,z_2)\big)^{1/2}
	\;\;\text{ and } \;\;
	\E\left[\prod_{1\leq i \leq 4} \tilde{h}^D_\eps(z_i)\right] \leq C \big(l_4(z_1,z_2,z_3,z_4)\big)^{1/4}.\end{equation} In particular, using Definition \ref{remark::K_alt_def}, we see that $$\tilde K_2^D(z,w) ^2 \le Cl_2(z_1,z_2) \;\;\text{ and } \;\;
	\tilde K_4^D(z_1,\ldots,z_4 )^4 \le Cl_4(z_1,z_2,z_3,z_4)
	.$$
\end{prop}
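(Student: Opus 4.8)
\emph{Proof proposal.} The plan is to reduce both estimates to bounds on the second and fourth moments of the circle-average field at a \emph{single} point, and then to recombine these using Cauchy--Schwarz (for the two-point bound) and the generalised H\"older inequality (for the four-point bound). Given distinct $z_1,z_2$, set $r_i := R(z_i;z_1,z_2)$ and take the disjoint balls $D_i:=B_{z_i}(r_i/2)\subset D$ (the factor $1/2$ guarantees disjointness and costs only an additive $O(1)$ in each logarithm). By Lemma~\ref{lemma::kernel_function},
\[ \tilde K_2^D(z_1,z_2)=\E[\varphi_D^{D_1}(z_1)\,\varphi_D^{D_2}(z_2)]=\E[h^D_{r_1/2}(z_1)\,h^D_{r_2/2}(z_2)], \]
so Cauchy--Schwarz yields $|\tilde K_2^D(z_1,z_2)|\le\big(\E[h^D_{r_1/2}(z_1)^2]\,\E[h^D_{r_2/2}(z_2)^2]\big)^{1/2}$. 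Analogously, using Lemma~\ref{lemma::fourpointexpression} with four disjoint balls $B_{z_i}(r_i/2)$, $r_i=R(z_i;z_1,\dots,z_4)$, H\"older bounds $\tilde K_4^D$ by $\prod_{i=1}^4\big(\E[h^D_{r_i/2}(z_i)^4]\big)^{1/4}$. Since the mollified products stabilise to $\tilde K_2^D,\tilde K_4^D$ once the supporting balls are disjoint (the remark after Definition~\ref{remark::K_alt_def}), this reduces everything to controlling single-point moments.

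The heart of the argument is therefore the pair of estimates
\[ \E[h^D_r(z)^2]\le C\log\!\Big(\tfrac{R(z,D)}{r}\Big),\qquad \E[h^D_r(z)^4]\le C\Big[\log^2\!\Big(\tfrac{R(z,D)}{r}\Big)+\log\!\Big(\tfrac{R(z,D)}{r}\Big)\Big], \]
whose factors are precisely those defining $l_2$ and $l_4$. To prove them I would exploit an independent-increment structure across scales. Fix $z$ and map $D$ conformally to $\D$ with $z\mapsto 0$; by conformal invariance of the harmonic-average construction, $h^D_r(z)$ is identified with a circle average around $0$ of a field of law $\Gamma^\D$. For radii $\rho_2<\rho_1<1$, the domain Markov property (Assumption~\ref{ass:ci_dmp}(iv)) gives $h^\D_{\rho_2}(0)=\varphi_\D^{B_0(\rho_1)}(0)+\varphi_{B_0(\rho_1)}^{B_0(\rho_2)}(0)$ with the two terms independent, and rescaling $B_0(\rho_1)$ to $\D$ shows the second term is distributed as $h^\D_{\rho_2/\rho_1}(0)$. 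Taking $\rho=2^{-k}$ exhibits $h^\D_{2^{-n}}(0)$ as a sum of $n$ i.i.d.\ centred increments, each distributed as $h^\D_{1/2}(0)$ with finite second and fourth moments (the latter being where Assumption~\ref{ass:ci_dmp}(i) enters). Additivity of variance gives the first bound, while for the fourth moment the mean-zero independence yields $\E[(\sum_k X_k)^4]=n\,\E[X_0^4]+3n(n-1)\,(\E[X_0^2])^2\le C(n+n^2)$, producing the $\log^2+\log$ shape.

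It remains to relate scales in $\D$ to scales in $D$ and to handle the fact that a conformal image of a circle is not a circle. Here I would invoke the Koebe distortion theorem: since $|\varphi'(z)|=1/R(z,D)$ and only radii $r\le R(z,D)/10$ are used, the image $\varphi(\partial B_z(r))$ lies between two concentric circles of radii comparable to $r/R(z,D)$, and the monotonicity Lemma~\ref{lemma::harm_av_monotone} sandwiches the moments of the harmonic average over $\varphi(\partial B_z(r))$ between those of genuine circle averages; the distortion constants perturb $\log(R(z,D)/r)$ only by $O(1)$, so the number of dyadic scales is $n\asymp\log(R(z,D)/r)$. Feeding the single-point bounds back through the Cauchy--Schwarz and H\"older steps then gives the proposition, the two factors under the square root being exactly $l_2(z_1,z_2)^2$ and the four factors under the product being exactly $l_4(z_1,\dots,z_4)^4$.

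The main obstacle I anticipate is twofold. First, one must justify rigorously that the single-scale increment $h^\D_{1/2}(0)$ has a finite fourth moment and that the increments are genuinely independent and identically distributed as stochastic objects (not merely distributionally scale-by-scale); this should follow by re-running the mollifier approximation of Lemma~\ref{lemma::circle_average_defn} at the level of fourth moments, combining Assumption~\ref{ass:ci_dmp}(i) with Lemma~\ref{lemma::harm_av_monotone} to obtain the requisite uniform integrability. Second, the distortion comparison must be carried out so that all constants are genuinely universal, independent of $D$ and $z$; this is exactly where the truncation $R(z;z_1,\dots,z_k)\le R(z_i,D)/10$ built into the definition \eqref{eq:rzw} is used, keeping every radius safely away from $\partial D$ so that Koebe applies with absolute constants.
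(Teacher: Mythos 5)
Your core argument is essentially the paper's. The single-point estimates
\[
\E[\tilde h^D_r(z)^2]\le C\log\bigl(R(z,D)/r\bigr),\qquad \E[\tilde h^D_r(z)^4]\le C\bigl(\log^2(R(z,D)/r)+\log(R(z,D)/r)\bigr)
\]
are exactly Lemma \ref{Variance}, and the paper proves them by the same mechanism you describe: an iterated domain Markov decomposition across dyadic scales, independence and centring of the harmonic increments, conformal/scale invariance to identify each increment's law, and the expansion $\E[(\sum_k X_k)^4]=n\E[X_1^4]+3n(n-1)(\E[X_1^2])^2$ for the fourth moment. (One stylistic difference: the paper decomposes in place with concentric balls $B_z(2^k r)$ rather than mapping to $\D$ first, which lets it work with the mollified field $\tilde h^D_r(z)=(h^D,\phi^z_r)$ --- a genuine test-function pairing to which Assumption \ref{ass:ci_dmp}(i) applies directly --- and thereby sidesteps the integrability issue you correctly flag for circle-average increments.) Your asymmetric choice of radii $R(z_i;z_1,\dots,z_k)/2$ in the Cauchy--Schwarz/H\"older step for $\tilde K_2^D$, $\tilde K_4^D$ is fine, and is if anything cleaner than the paper's reduction to a single common scale.

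The genuine gap concerns the first display \eqref{eqn::mollifiedfield_2pointbound}, which is asserted for \emph{every} $\eps$ with $B_{z_i}(\eps)\subset D$, i.e.\ all $\eps\le\min_i d(z_i,\partial D)$. Your reduction (``the mollified products stabilise to $\tilde K_2^D,\tilde K_4^D$ once the supporting balls are disjoint, [so] this reduces everything to single-point moments'') only covers $\eps$ below the pairwise separation of the points, and your single-point bounds are explicitly restricted to $r\le R(z,D)/10$. This leaves two regimes untreated. First, overlapping balls with $\eps\le R(z_i,D)/10$: here the product no longer equals the kernel, so one must run Cauchy--Schwarz/H\"older directly at scale $\eps$ (using that an overlap forces $\eps\gtrsim|z_i-z_j|$, so the resulting logarithms are still comparable to those in $l_2,l_4$); this uniform-in-$\eps$ bound with possibly coincident or overlapping balls is precisely what is later fed into dominated convergence in Lemma \ref{lemma::circ_avg_good_approx}, so it cannot be discarded from the statement. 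Second, and more seriously, the near-boundary regime $R(z_i,D)/10<\eps\le d(z_i,\partial D)$, which is nonempty since Koebe gives $d(z,\partial D)\ge R(z,D)/4$: there your multiscale argument produces nothing (the number of usable dyadic scales is zero, and the distortion comparison degenerates), and your proposal contains no bound on $\var(\tilde h^D_\eps(z_i))$ at all. The paper needs a separate idea here, Corollary \ref{cor:var_near_boundary}: embed $D$ in a larger domain $D'$ with $10\eps\le R(z,D')\le 11\eps$, apply Lemma \ref{Variance} in $D'$, and observe that discarding the independent centred harmonic correction $\ph_D^{D'}$ can only decrease the second (and, by the same independence computation, the fourth) moment. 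Without this domain-enlargement step, your argument proves the ``In particular'' kernel bounds but not the stated inequality for all admissible $\eps$.
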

\begin{remark}\label{remark::K_4_neater}
Using the fact that $R(z_i;z_1,\cdots, z_4)/R(z_i,D)\leq 1/10$ for all $i\leq 4$, the AM-GM inequality and Koebe's quarter theorem, we see that we can also write $$|\tilde{K}_4^D(z_1,\cdots, z_4)| \leq C \sum_{i\ne j} \left(\log^2\left(\frac{|z_i-z_j|}{4\,\text{diam} (D)}\right)\vee \log^2(10)\right).$$
This alternative formulation will be useful in Section \ref{sec::gaussianity}.
\end{remark}

We first prove an intermediate lemma.
Let $\phi, (\phi_r^z)_{r>0,z\in D}: \C \to \R$ be as in \cref{remark::K_alt_def} and $(\tilde{h}^D_r(z))_{r>0,z\in D}$ be the mollified field. Then we have the following:

\begin{lemma}\label{Variance} 
	Fix $D \subset \C$. There exists $C>0$ universal such that for all $z,r$ with $r\leq R(z,D)/10$,
	$$
	\var (\tilde{h}^D_r(z)) \le C\log (R(z,D)/r).
	$$
	Also,
	$$
	\E[  (\tilde{h}^D_r(z))^4 ] \le C (\log^2(R(z,D)/r)   +  \log(R(z,D)/r))
	$$
\end{lemma}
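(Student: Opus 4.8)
The plan is to decompose the mollified field $\tilde h_r^D(z)$ into a circle-average part and an independent small-scale part, and then to control the circle-average part through the self-similar additive structure of circle averages. Throughout write $R=R(z,D)$ and let $\phi$ be the fixed radial profile of \cref{remark::K_alt_def}.

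First I would split $\tilde h^D_r(z)$ using the domain Markov property in $B_z(r)$. Writing $h^D=h_D^{B_z(r)}+\varphi_D^{B_z(r)}$, linearity gives
$$\tilde h_r^D(z)=(h^D,\phi_r^z)=(h_D^{B_z(r)},\phi_r^z)+(\varphi_D^{B_z(r)},\phi_r^z)=:A+B.$$
Since $\phi_r^z$ is radially symmetric about $z$, has mass $1$ and is supported in $B_z(r)$, and $\varphi_D^{B_z(r)}$ is harmonic there, the mean value property integrated over radii (exactly as in the proof of \cref{lemma::circle_average_defn}) gives $B=\varphi_D^{B_z(r)}(z)=h_r^D(z)$, the circle average. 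The terms $A$ and $B$ are independent and centred by the domain Markov property, and moreover the law of $A$ does not depend on $(D,z,r)$: applying conformal invariance to the scaling $w\mapsto (w-z)/r$ one finds $A\overset{(d)}{=}(h^\D,\phi)$, whose second and fourth moments are finite universal constants $c_0,c_1$ by \cref{ass:ci_dmp}(i). Expanding and using $\E[A]=\E[B]=0$ and independence,
$$\var(\tilde h_r^D(z))=c_0+\E[B^2],\qquad \E[(\tilde h_r^D(z))^4]=c_1+6c_0\,\E[B^2]+\E[B^4],$$
so it remains to bound the second and fourth moments of the circle average $B=h_r^D(z)$.

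For this I would exploit that circle averages form a process with independent, scale-stationary increments. Let $F\colon D\to\D$ be the conformal map with $F(z)=0$, $F'(z)>0$, and set $Y_\rho:=\varphi_D^{F^{-1}(B_0(\rho))}(z)$ for $\rho\in(0,1)$. Iterating the domain Markov decomposition along the nested domains $F^{-1}(B_0(2^{-k}))$ — precisely as in the proof of \cref{lemma::harm_av_monotone}, where each finer increment is built from the ``inside'' field and is thus independent of the coarser data carried by the harmonic part — shows that, setting $X_1:=Y_{1/2}$ and $X_k:=Y_{2^{-k}}-Y_{2^{-(k-1)}}$, the variables $X_1,X_2,\dots$ are independent; and by conformal invariance (mapping $F^{-1}(B_0(2^{-(k-1)}))$ onto $\D$) each satisfies $X_k\overset{(d)}{=}h_{1/2}^\D(0)$. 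Thus $Y_{2^{-n}}=\sum_{k=1}^n X_k$ is a sum of i.i.d.\ centred terms with finite second and fourth moments $\sigma^2,m_4$ (finiteness holds by \cref{ass:ci_dmp}(i), writing the circle average as a difference of two quantities of the form $(\,\cdot\,,\phi)$), so by the elementary fourth-moment identity for independent sums,
$$\var(Y_{2^{-n}})=n\sigma^2,\qquad \E[Y_{2^{-n}}^4]=n\,m_4+3n(n-1)\sigma^4\le C(n^2+n).$$
Domain monotonicity (\cref{lemma::harm_av_monotone}) extends these to arbitrary $\rho$, giving $\E[Y_\rho^2]\le C\log(1/\rho)$ and $\E[Y_\rho^4]\le C(\log^2(1/\rho)+\log(1/\rho))$ up to additive constants.

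Finally I would transfer these bounds from the conformal balls $F^{-1}(B_0(\rho))$ to the genuine ball $B_z(r)$ via a Koebe distortion estimate. Since $r\le R/10$, the distortion theorem applied to $F^{-1}$ yields $F^{-1}(B_0(\rho))\subseteq B_z(4\rho R)$ for $\rho\le 1/2$, so choosing the largest dyadic $\rho\le r/(4R)$ gives $F^{-1}(B_0(\rho))\subseteq B_z(r)$ with $\log(1/\rho)\le \log(R/r)+O(1)$. As a smaller domain gives larger moments of the harmonic average (\cref{lemma::harm_av_monotone}), this yields $\E[B^2]=\E[(h_r^D(z))^2]\le \E[Y_\rho^2]\le C\log(R/r)$ and $\E[B^4]\le \E[Y_\rho^4]\le C(\log^2(R/r)+\log(R/r))$; the additive constants are absorbed since $R/r\ge 10$ forces $\log(R/r)\ge\log 10>0$. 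Substituting into the two identities of the first step completes the proof. The main obstacle is the third step: making the independence of the increments across scales fully rigorous and pinning down the Koebe comparison between the true circles $\partial B_z(r)$ and the conformal images $\partial F^{-1}(B_0(\rho))$; once the i.i.d.\ increment structure is established, the moment bounds are immediate.
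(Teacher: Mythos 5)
Your proposal is correct and follows essentially the same strategy as the paper's proof: iterate the domain Markov property across dyadic scales to express the mollified field as $(h^{B_z(r)},\phi)$-type local term plus a sum of independent harmonic increments, identify each increment as a copy of $\ph_{\D}^{B_0(1/2)}(0)$ via conformal invariance (with finite moments from Assumption (i)), and control the geometric mismatch using Koebe's theorem together with \cref{lemma::harm_av_monotone}. The only real difference is cosmetic: you nest conformal balls $F^{-1}(B_0(2^{-k}))$, so that all increments are exactly i.i.d.\ and the Koebe correction enters at the inner scale $r$, whereas the paper nests Euclidean balls $B_z(2^k r)$, so the increments are i.i.d.\ except for a single outermost term $\ph_D^{B_N}(z)$ that is bounded by the same Koebe-plus-monotonicity argument at the outer scale.
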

\begin{proof}
	Let $N =\lfloor \log_2 (R(z,D)/5r) \rfloor$ and set $B_k = B_z(2^kr)$ for $k \le N$; $B_{N+1} = D$.
	By the domain Markov property, we can write
	\begin{equation}
	h^D = h_D^{B_N} +\ph_D^{B_N} \label{eq:DMP_break_up}
	\end{equation}
	where $\ph_D^{B_N}$ is harmonic in $B_N$ and $h^{B_N}_D$ is independent of $\ph_D^{B_N}$ and is 0 outside $B_N$.
	\begin{figure}[h]
		\centering
		\includegraphics[scale=0.5]{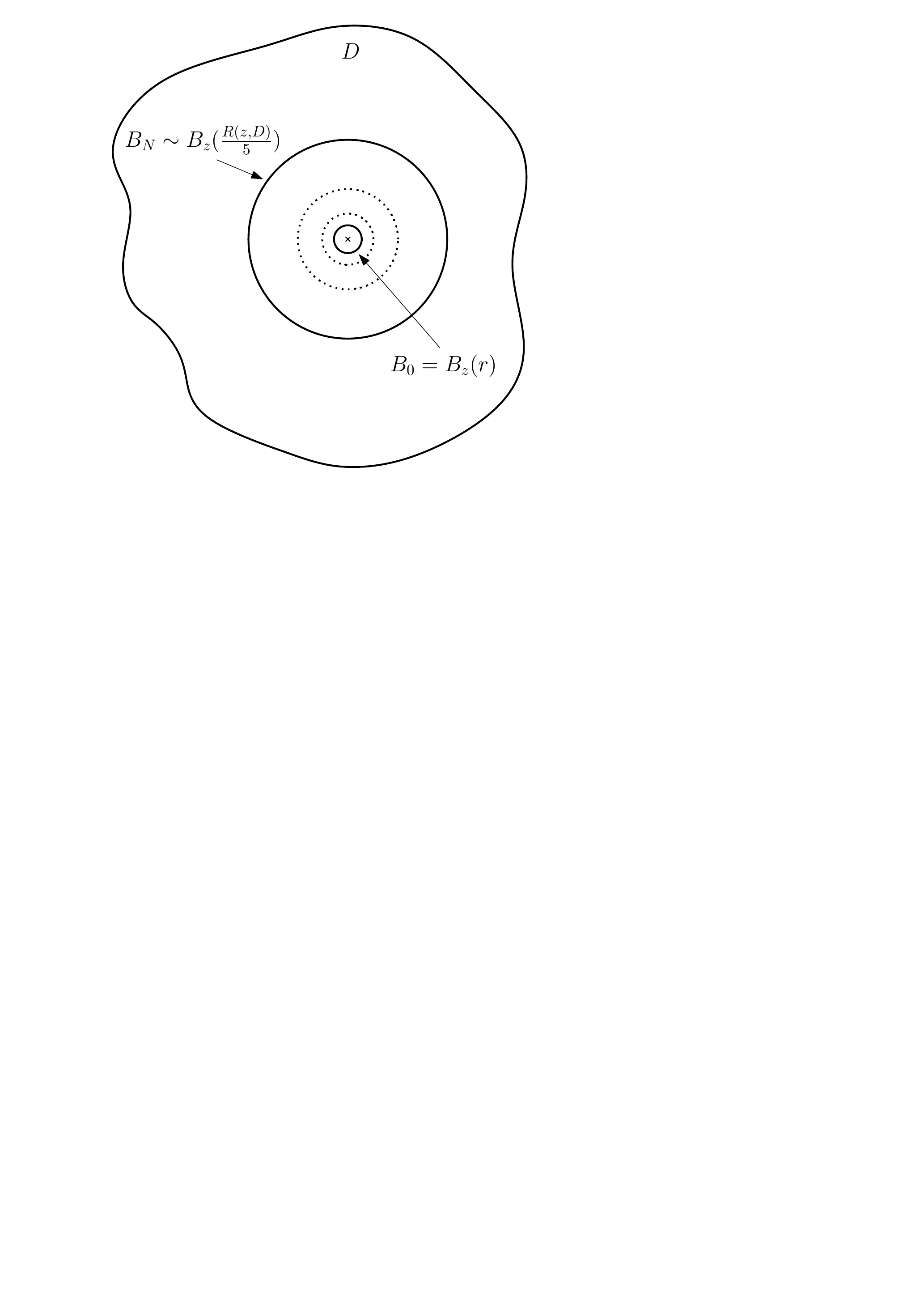}
		\caption{The sets $(B_k)_{0\leq k \leq N}$ (the dotted circles represent the boundaries of $(B_k)_{1\leq k \leq N-1}$) and $B_{N+1}=D.$}
	\end{figure}
	Iterating this decomposition, we get
	$$
	h^D = \tilde{h}+ \sum_{k=0}^N \ph_k
	$$
	where:
	\begin{itemize}
		\item the $\ph_k$'s are independent and $\ph_k$ is harmonic in $B_k$;
		\item $\tilde{h}$ is an independent copy of $h^{B_0}$ and is $0$ outside of $B_0=B_z(r)$.
	\end{itemize}
	Recall that $\phi^z_r$ is radially symmetric (about $r$) and has mass $1$, so that
	\begin{equation}(\ph_k , \phi^z_r) = \ph_k(z)\label{eq:break_up}
	\end{equation}
	for every $0\le k \le N$. Note that by scale and translation invariance we have $(\tilde{h},\phi_z^r)\overset{(d)}{=}(h^{\mathbb{D}},\phi)$, and therefore $(\tilde{h},\phi^z_r)$ has finite variance (by Assumptions \ref{ass:ci_dmp}) that is independent of $r$ and $z$. Also note that since $\ph_k$ is equal (in law) to the harmonic part in the decomposition
	$h^{B_{k+1}}=h_{B_{k+1}}^{B_k}+\ph_{B_{k+1}}^{B_k}$, we have by conformal invariance and the domain Markov property that $$\{\ph_k(2^{k+1}rw+z): w\in B_0(1/2)\}\overset{(d)}{=}\{\ph^{B_0(1/2)}_\mathbb{D}(w): w\in B_0(1/2)\}$$ for $0\le k \leq N-1$.
	Combining this information and \eqref{eq:break_up}, we finally obtain that
	\begin{equation*}
	\var(\tilde{h}^D_r(z))=\var(h^D,\phi_r^z)  = \var(h^\D,\phi) + N \var(\ph^{B_0(1/2)}_{\mathbb{D}}(0))+\var (\ph^{B_N}_D(z)).
	\end{equation*}
	This completes the proof using our finite variance assumption. Note that $\text{Var}(\ph^{B_N}_D(z))$ can be bounded above by something which does not depend on either $z$ or $D$. Indeed, by the Koebe quarter theorem, we can conformally map $D$ to $\mathbb{D}$, with $z\mapsto 0$ and $B_N \mapsto D_N$, for some $D_N\subset \D$ such that $d(0,\partial D_N)\geq 1/40$. Then by conformal invariance and Lemma \ref{lemma::harm_av_monotone},  $\var(\ph^{B_N}_D(z))=\var(\ph^{D_N}_\mathbb{D}(0))\leq \var(\ph^{B_0(1/40)}_{\mathbb{D}}(0))$.
	\medbreak
	
	Using the same decomposition, \eqref{eq:DMP_break_up} and \eqref{eq:break_up}, and the fact that every variable in the decomposition has mean 0, we also obtain the fourth moment bound
	\begin{equation*}
	\E[\tilde h^D_r(z)^4] \le C' (N^2+N)
	\end{equation*}
	for some constant $C' >0$.
\end{proof}

We now prove a corollary which gives the same bound for the variance of the field convolved with a mollifier at a point that is near the boundary.
\begin{corollary}\label{cor:var_near_boundary}
	There exists a constant $c>0$ such that for any point $z$ with $R(z,D)/10 < r<d(z,\partial D)$,
	$$
	\var(\tilde h^D_r(z)) \le c.
	$$
\end{corollary}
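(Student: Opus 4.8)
The plan is to use the domain Markov property in an intermediate ball to peel off a single harmonic average, reducing the problem to two pieces that are each bounded by a universal constant; the second of these is the conceptual crux.

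Since $r<d(z,\partial D)$, fix any $\rho$ with $r<\rho<d(z,\partial D)$, so that $B_z(\rho)$ is a Jordan domain with $\overline{B_z(r)}\subset B_z(\rho)\Subset D$. Applying the domain Markov decomposition $h^D=h^{B_z(\rho)}_D+\varphi^{B_z(\rho)}_D$ in $B_z(\rho)$, and using that $\phi_r^z$ is radially symmetric about $z$, has mass one and is supported in $B_z(\rho)$, the harmonicity of $\varphi^{B_z(\rho)}_D$ gives $(\varphi^{B_z(\rho)}_D,\phi_r^z)=\varphi^{B_z(\rho)}_D(z)=(h^D,\rho_z^{\partial B_z(\rho)})$, exactly as in Lemma \ref{lemma::circle_average_defn}. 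By the independence in the decomposition,
\[ \var(\tilde h^D_r(z))=\var\big((h^{B_z(\rho)}_D,\phi_r^z)\big)+\var\big((h^D,\rho_z^{\partial B_z(\rho)})\big), \]
and it remains to bound each term by a universal constant.

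For the harmonic-average term I would map $F:D\to\D$ with $F(z)=0$ and use conformal invariance to write $\var((h^D,\rho_z^{\partial B_z(\rho)}))=\var((h^\D,\rho_0^{\partial F(B_z(\rho))}))$. Since $\rho>R(z,D)/10$, the Koebe distortion theorem applied to $F^{-1}$ forces the curve $\partial F(B_z(\rho))$ to stay outside a disc $B_0(c_0)$ of universal radius $c_0>0$, so that $B_0(c_0)\subset F(B_z(\rho))$. The monotonicity of harmonic averages (Lemma \ref{lemma::harm_av_monotone}) then bounds this variance by $\var((h^\D,\rho_0^{\partial B_0(c_0)}))$, a finite universal constant; this is precisely the Koebe-plus-monotonicity argument already used at the end of the proof of Lemma \ref{Variance}.

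For the remaining term, translation and scaling invariance (conformal invariance with $u\mapsto (u-z)/\rho$) give $(h^{B_z(\rho)}_D,\phi_r^z)\eqd(h^\D,\phi_s^0)$ with $s:=r/\rho$. Because $\rho<d(z,\partial D)\le R(z,D)<10r$ one has $s\in(1/10,1)$. Here is the point I would flag as the main obstacle: naively, as $s\to 1$ the rescaled mollifier $\phi_s^0$ seems to approach $\partial\D$, so one might fear a blow-up of the variance. This is resolved by the observation that $\phi$ is supported in a fixed compact subset of $\D$, so $\supp(\phi_s^0)\subset\overline{B_0(1-\eta)}$ for all $s\le 1$ and the family $\{\phi_s^0:s\in[1/10,1]\}$ is a compact subset of $C_c^\infty(\D)$ (continuous image of a compact interval, with supports in a fixed compact set). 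Continuity of the bilinear form $K_2^\D$ (Assumption \ref{ass:ci_dmp}(i)) then yields $\sup_{s\in[1/10,1]}\var((h^\D,\phi_s^0))=\sup_s K_2^\D(\phi_s^0,\phi_s^0)<\infty$. Combining the two bounds gives $\var(\tilde h^D_r(z))\le c$ for a universal constant $c$, as required.
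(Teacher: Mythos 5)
Your proof is correct, but it takes a genuinely different route from the paper's. The paper goes \emph{outward}: it chooses an enlarged domain $D'\supset D$ with $10r\le R(z,D')\le 11r$, applies Lemma~\ref{Variance} in $D'$ (now $r\le R(z,D')/10$, so the logarithmic bound there is at most $C\log 11$), and then observes via the decomposition $h^{D'}=h^{D'}_D+\ph^{D'}_D$ that $\var(\tilde h^D_r(z))\le\var(\tilde h^{D'}_r(z))$, since adding the independent harmonic part only increases the variance. You instead go \emph{inward}: you decompose in an intermediate ball $B_z(\rho)\Subset D$, split the variance into a zero-boundary term and a harmonic-average term, and bound them separately --- the harmonic term by the same Koebe-plus-monotonicity (Lemma~\ref{lemma::harm_av_monotone}) argument that closes the paper's proof of Lemma~\ref{Variance}, and the zero-boundary term by rescaling to $(h^\D,\phi^0_s)$ with $s=r/\rho\in(1/10,1)$ and noting that $s\mapsto K_2^\D(\phi^0_s,\phi^0_s)$ is a continuous function on a compact interval; your observation that the rescaled mollifiers all have support in a fixed compact subset of $\D$ is exactly what makes this compactness argument legitimate, and it is what defuses the apparent blow-up as $s\to 1$. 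Each approach buys something. The paper's is shorter, reducing everything to Lemma~\ref{Variance} in one stroke; but note that it applies the domain Markov property with the arbitrary simply connected domain $D$ as the subdomain of $D'$, whereas Assumption~\ref{ass:ci_dmp}(iv) is stated for Jordan subdomains, so strictly speaking the paper uses a mild extension of the assumption there. Your version only ever invokes the Markov property for a disc strictly inside $D$, which sits squarely within the stated hypotheses; the price is that you lean on the assumed joint continuity of the bilinear form $K_2^\D$ from Assumption~\ref{ass:ci_dmp}(i), which the paper's route does not need at this point.
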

\begin{proof}
	We can find a domain $D' $ containing $D$ such that $10r \le R(z,D') \le 11r $. Also we can write $$h^{D'} = h_D^{D'} + \ph_D^{D'}$$ for $h_D^{D'}\overset{(d)}{=}h^D$ and $\ph_D^{D'}$ independent and harmonic inside $D$. We know from \cref{Variance} that $\var(h^{D'} ,\phi^z_r) \le c$. Since adding $\ph_D^{D'}$ only increases the variance, the proof is complete.
\end{proof}

We now extend this to the full covariance structure of the mollified field to prove Proposition \ref{prop:K_a.s._finite}.

\begin{proof}[Proof of Proposition \ref{prop:K_a.s._finite}] We first prove \eqref{eqn::mollifiedfield_2pointbound} in the case of two points $z_1\ne z_2$. Observe that by the domain Markov property, as in the proof of Proposition \ref{lemma::conv_circ_avg_field}, if $\ve_0:= |z_1-z_2|/10 \wedge R(z_1,D)/10 \wedge R(z_2,D)/10$ then for all $\eps<\eps_0$ we have that $\E[\tilde{h}_\eps^D(z_1)\tilde{h}_\eps^D(z_2)]=\E[\tilde{h}_{\eps_0}^D(z_1)\tilde{h}_{\eps_0}^D(z_2)]$. Thus we need only prove the inequality for $\eps_0\le \eps< d(z,\partial D)$. However, this follows simply by applying Cauchy--Schwarz and using Lemma \ref{Variance} and/or Corollary \ref{cor:var_near_boundary} as necessary (depending on whether $\eps_0$ is less than or greater than $R(z_1,D)/10$ and $R(z_2,D)/10$). The case of four points follows in the same manner.
\end{proof}

\subsection{Identifying the two point function} 
\label{sec:twopointGreen}
In this section we prove that for $z_1,z_2$ distinct
\begin{equation}
\label{eqn::lim_circav_kernel}
\tilde{K}_2^D(z_1,z_2):=\lim_{\eps\to 0} \E[h^D_\eps(z_1)h^D_\eps(z_2)]= aG^D(z_1,z_2)\end{equation}
for some $a>0$, where $G^D$ is the Green's function on $D$ with Dirichlet boundary conditions.

We first need a technical lemma, namely, an exact expression for the variance of harmonic averages, derived from the bounds of the previous section together with the properties of the two-point kernel deduced in Section \ref{sec:harmonic}.

\begin{lemma}\label{lemma::variance_harm_avg}
	Let $\gamma$ be the boundary of a Jordan domain $D'\subset D$,
	such that $\gamma \cap \partial D=\emptyset.$
	Let $z\in D'$. Then
	$$\E[(h^D,\rho_z^\gamma)^2] = \int_D \tilde{K}_2^D(w,z) \rho_z^\gamma(dw) $$
	where $\rho_z^\gamma$ is the harmonic measure seen from $z$ on $\gamma$.
\end{lemma}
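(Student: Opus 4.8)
The plan is to reduce to the unit disc and then exploit the fact that, on a circle centred at the reference point, harmonic measure is uniform and the radial average of a harmonic function is explicit. First I would use conformal invariance to assume $D' = \D$, $z = 0$ and $\gamma = \partial\D$. Both sides of the claimed identity are conformally invariant: the left-hand side because the harmonic average $(h^D,\rho_z^\gamma)$ is conformally invariant (as noted in Section \ref{sec::harm_avg}), and the right-hand side by conformal invariance of $\tilde K_2^D$ (Proposition \ref{lemma:conformal_invariance}) together with conformal invariance of harmonic measure. Applying the map $f : D' \to \D$ with $f(z) = 0$, and using that $\rho_0^{\partial\D}$ is uniform on $\partial\D$, we are reduced to showing, for a simply connected domain $D \supset \overline{\D}$, that
\[ \E[(h^D,\rho_0^{\partial\D})^2] = \frac{1}{2\pi}\int_0^{2\pi} \tilde K_2^D(0,e^{i\theta})\,d\theta. \]

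The heart of the argument is the following radial identity: for every $r\in(0,1)$,
\[ \frac{1}{2\pi}\int_0^{2\pi}\tilde K_2^D(0,re^{i\theta})\,d\theta = \Var\big(\varphi_D^{B_0(r)}(0)\big). \]
To prove it, fix $\rho<r$ small and, for each $\theta$, choose disjoint Jordan subdomains $B_0(\rho)$ and $B_{re^{i\theta}}(\rho')$ with $\rho'$ small. By Lemma \ref{lemma::kernel_function}, $\tilde K_2^D(0,re^{i\theta}) = \E[\varphi_D^{B_0(\rho)}(0)\,h^D_{\rho'}(re^{i\theta})]$. Averaging over $\theta$ and interchanging expectation and integral — legitimate by the logarithmic bound of Proposition \ref{prop:K_a.s._finite} and dominated convergence — gives
\[ \frac{1}{2\pi}\int_0^{2\pi}\tilde K_2^D(0,re^{i\theta})\,d\theta = \E\Big[\varphi_D^{B_0(\rho)}(0)\cdot \frac{1}{2\pi}\int_0^{2\pi} h^D_{\rho'}(re^{i\theta})\,d\theta\Big]. \]
Letting $\rho'\to 0$, the average of the small circle averages over the circle of radius $r$ converges, by the circle average approximation of Proposition \ref{lemma::conv_circ_avg_field}, to the harmonic average $(h^D,\rho_0^{\partial B_0(r)}) = \varphi_D^{B_0(r)}(0)$. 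Finally, since $\rho<r$, the nesting decomposition $\varphi_D^{B_0(\rho)}(0) = \varphi_D^{B_0(r)}(0) + \varphi_{B_0(r)}^{B_0(\rho)}(0)$ into independent mean-zero terms (as in the proof of Lemma \ref{lemma::harm_av_monotone}) yields $\E[\varphi_D^{B_0(\rho)}(0)\varphi_D^{B_0(r)}(0)] = \Var(\varphi_D^{B_0(r)}(0))$, proving the identity.

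It then remains to let $r\to 1^-$ on both sides. On the left, $\tilde K_2^D(0,\cdot)$ is harmonic, hence continuous, on a neighbourhood of $\overline{\D}\setminus\{0\}$ (Proposition \ref{lemma::harmonic kernel}), so the radial averages converge to $\frac{1}{2\pi}\int_0^{2\pi}\tilde K_2^D(0,e^{i\theta})\,d\theta$; equivalently, the radial average of this harmonic function has the form $a+b\log r$ and converges trivially as $r\to 1$. On the right, the nesting $\varphi_D^{B_0(r)}(0) = \varphi_D^{\D}(0) + \varphi_\D^{B_0(r)}(0)$ into independent terms gives $\Var(\varphi_D^{B_0(r)}(0)) = \Var(\varphi_D^\D(0)) + \Var(\varphi_\D^{B_0(r)}(0))$, and the second term tends to $0$ as $r\to 1$ because it is the variance of the circle average of $h^\D$ over a circle leaving every compact subset of $\D$, which vanishes by the Dirichlet boundary condition exactly as in Lemma \ref{lemma::circle_average_defn}. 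Hence both sides converge to the desired quantities, and unravelling the conformal reduction gives the general statement.

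I expect the main obstacle to be the rigorous justification of the radial identity: interchanging the $\theta$-integral with the $\rho'\to 0$ limit and identifying the averaged small circle averages with the circle average $\varphi_D^{B_0(r)}(0)$. This is precisely where the logarithmic estimates of Proposition \ref{prop:K_a.s._finite} (to dominate and apply Fubini) and the circle average approximation of Proposition \ref{lemma::conv_circ_avg_field} are indispensable; everything else is a bookkeeping exercise in the independence structure of the domain Markov decomposition.
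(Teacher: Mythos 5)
Your proposal has two genuine gaps, and the first one is structural. The conformal reduction to $D'=\D$, $z=0$, $\gamma=\partial\D$ is not available. Every quantity in the lemma --- the field $h^D$, the harmonic average $(h^D,\rho_z^\gamma)=\varphi_D^{D'}(z)$, and the kernel $\tilde K_2^D$ --- is attached to the \emph{ambient} domain $D$, so conformal invariance (Assumptions \ref{ass:ci_dmp} (iii), Proposition \ref{lemma:conformal_invariance}) can only be invoked for maps conformal on all of $D$. The Riemann map $f\colon D'\to\D$ is defined only on $D'$ and does not transform $h^D$ or $\tilde K_2^D$; the remark in Section \ref{sec::harm_avg} you appeal to concerns only the zero-boundary part $h_D^{D'}$, not the harmonic part, whose law depends on both $D$ and $D'$. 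Worse, in general \emph{no} conformal map $F$ of $D$ satisfies $F(D')=\D$: since $\gamma\Subset D$, such an $F$ would be analytic and injective in a neighbourhood of $\gamma$ with $F(\gamma)=\partial\D$, forcing $\gamma$ to be an analytic curve, whereas the lemma allows an arbitrary Jordan curve. So your argument can at best treat circles and their images under maps of $D$, not the stated generality. The paper's proof never moves $\gamma$: it writes $(h^D,\rho_z^\gamma)$ as the limit of $(h^D,\hat\psi_z^\delta)$, where $\hat\psi_z^\delta=|f'|^2(\psi_0^\delta\circ f)$ are honest test functions on $D$, expresses the second moment as deterministic integrals of $\tilde K_2^D$ against them, and collapses those integrals using harmonicity of $\tilde K_2^D(x,\cdot)$ in a neighbourhood of $\gamma$ --- available exactly because $\gamma\cap\partial D=\emptyset$ --- so that no transformation of the ambient field and no boundary behaviour of the kernel is ever needed.

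Second, even in the circle case the limiting step $\rho'\to 0$ in your radial identity is unjustified, and the citation you give for it is circular. Lemma \ref{lemma::conv_circ_avg_field} concerns $(h^D_\eps,\phi)$ for smooth compactly supported $\phi$, not the integral of the circle-average field against the singular uniform measure on $\partial B_0(r)$; and it is the final convergence-to-GFF statement, whose proof invokes Gaussianity, Corollary \ref{cor::k_green} and Lemma \ref{lemma::variance_harm_avg} itself, so it cannot be used here. If one instead proves your step by hand --- decompose $h^D$ in $A=B_0(r+2\rho')$, so that $h^D_{\rho'}(re^{i\theta})=\varphi_D^{A}(re^{i\theta})+(h_D^{A})_{\rho'}(re^{i\theta})$, and apply the mean-value property to $\varphi_D^A$ --- then the term that must vanish is $\E[\varphi_{A}^{B_0(\rho)}(0)\,\tfrac{1}{2\pi}\int_0^{2\pi}(h_D^{A})_{\rho'}(re^{i\theta})\,d\theta]$, which by Lemma \ref{lemma::kernel_function} and scaling equals $\tilde K_2^{\D}\big(0,r/(r+2\rho')\big)$. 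You therefore need the boundary decay $\tilde K_2^{\D}(0,s)\to 0$ as $s\uparrow 1$, which does \emph{not} follow from Proposition \ref{prop:K_a.s._finite} (that bound is only $O(1)$ near $\partial \D$) and is essentially the constant-pinning that the paper later extracts \emph{from} this lemma (Lemma \ref{lem:k2_unit_disc}). It can be established separately (harmonicity plus rotational invariance give $\tilde K_2^\D(0,\cdot)=\alpha+\beta\log|\cdot|$, and $\alpha=0$ follows from the Dirichlet boundary condition together with Lemma \ref{lemma::circ_avg_good_approx}), but that argument is exactly what is missing: it, and not bookkeeping, is the real content of the step. You also tacitly assume joint measurability of $(\theta,\omega)\mapsto h^D_{\rho'}(re^{i\theta})$ in order to define the $\theta$-integral as a random variable, whereas each $h^D_{\rho'}(z)$ is only defined as an $L^2$ limit; this too needs a word.
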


Note that although the statement of this lemma may seem obvious, recall from Section \ref{sec::harm_avg} that the notation for the harmonic average $(h^D,\rho_z^\gamma)$ is an abuse of notation (the way we define it does not a priori have anything to do with integrating against harmonic measure).

\begin{proof}
	Let $\varphi:D'\to \D$ be the unique conformal map with $\varphi(z)=0$ and $\varphi'(z)>0$. Then by definition of the harmonic average,
	\begin{eqnarray*}
		\E[(h^D,\rho_z^\gamma)^2] & = & \lim_{\delta_2 \to 0}\lim_{\delta_1 \to 0 } \E[(h^D, \hat{\psi}^{\delta_1}_z)(h^D,\hat \psi_z^{\delta_2})] = \lim_{\delta_2 \to 0}\lim_{\delta_1 \to 0 }\iint_{D^2} \tilde{K}_2^D(x,y)\hat \psi_z^{\delta_1}(x)\hat \psi_z^{\delta_2}(y) \, dx dy \\
		& = &\lim_{\delta_2 \to 0}\lim_{\delta_1 \to 0 }\iint_{\D^2} \tilde{K}^D_2(\varphi^{-1}(x),\varphi^{-1}(y)) \psi_0^{\delta_1}(x)\psi_0^{\delta_2}(y)\, dx dy, \end{eqnarray*}
	where the last equality follows by definition of $\hat{\psi}^\delta_z$ and the harmonic average. Recall that $\psi_0^{\delta}$ is defined by normalising a smooth radially symmetric function from $\D$ to $[0,1]$, that is equal to 1 on $\{z: 1-\delta\leq |z| \leq 1-\delta/2\}$ and $0$ on the $\delta/10$ neighbourhood of this annulus, to have total mass $1$.
	
	We define $$\tilde{K}_2^D(\varphi^{-1}(x),\varphi^{-1}(y))=: f(x,y).$$
	Observe that for every $x\in \D$, by analyticity of $\varphi$ and Proposition \ref{lemma::harmonic kernel}, $f(x,y)$ viewed as a function of $y$ is harmonic in $\D\setminus \{x\}.$ We also have the bound
	\begin{equation*}  f(x,y)\leq C \log|\varphi^{-1}(x)-\varphi^{-1}(y)|\end{equation*}
	for every $x\ne y$ and some $C=C(D)$ by Proposition \ref{prop:K_a.s._finite}. The dependence on the domain here comes from the bounded conformal radius term in (\ref{eq:rzw}). Now fix $\delta_2>0$ and take $\delta_1<\frac{4}{11} \delta_2$, so that the support of $\psi_0^{\delta_1}$ lies entirely outside of $B_0(1-4{\delta_2}/10)\supset \text{supp}(\psi_0^{\delta_2})$. Pick $x \in \text{supp}(\psi_0^{\delta_1})$. Then it follows from harmonicity of $f(x,y)$ in
	$B_0(1-4{\delta_2}/10)$ that
	\begin{equation}
	\label{eqn::harmonic_rewriting} \int_\D f(x,y)\psi_0^{\delta_2}(y)\, dy = f(x,0).
	\end{equation}

	Now, (\ref{eqn::harmonic_rewriting}) tells us that (since the above expression does not depend on $\delta_2$)
	$$\E[(h^D,\rho_z^\gamma)^2]=\lim_{\delta_1 \to 0 } \int_\D f(x,0) \psi_0^{\delta_1}(x) \, dx.$$
	Furthermore, Proposition \ref{lemma::harmonic kernel} together with the fact that $\gamma$ lies strictly within $D$, implies that $f(x,0)$ extends to a continuous function on $x\in \partial \D$. This means that the right hand side is equal to $\int_{\D} f(x,0) \, \rho_0^{\partial \D}(dx)$, which is equal to $\int_D \tilde{K}_2^D(w,z)\rho_z^\gamma(dw)$ by a change of variables.
\end{proof}

\begin{remark}\label{remark::circ_avg_int_form}
	As a direct consequence of the above proof we see that if $\eps<1$ then
	$$\mathbb{E}[(h^\D_{\eps}(0))^2] = \int_{\D} \tilde{K}_2^{\D} (0,y) \rho_0^{\partial B_0(\eps)}(dy). $$
\end{remark}

We are now ready to prove \eqref{eqn::lim_circav_kernel}: we start with the case $x=0$ and $D=\D$.

\begin{lemma}\label{lem:k2_unit_disc} There exists $a>0$ such that
	$\tilde{K}_2^{\D}(0,y)=-a\log|y|$ for all $y\in \D\setminus \{0\}$.
\end{lemma}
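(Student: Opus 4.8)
The plan is to combine the two structural facts already established about the kernel — that $y\mapsto \tilde K_2^{\D}(0,y)$ is harmonic on $\D\setminus\{0\}$ (Proposition \ref{lemma::harmonic kernel}) and conformally invariant (Proposition \ref{lemma:conformal_invariance}) — with the zero boundary condition in order to pin down the radial profile exactly.

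First I would exploit rotational symmetry. For $\theta\in\R$ the rotation $R_\theta(z)=e^{i\theta}z$ is a conformal automorphism of $\D$ fixing $0$, so Proposition \ref{lemma:conformal_invariance} gives $\tilde K_2^{\D}(0,y)=\tilde K_2^{\D}(0,e^{i\theta}y)$ for all $y$ and all $\theta$. Hence $\tilde K_2^{\D}(0,\cdot)$ is radially symmetric, and I may write $\tilde K_2^{\D}(0,y)=g(|y|)$ for some function $g$ on $(0,1)$. By Lemma \ref{lemma::harmonic_kernel_proof} this function is $C^2$, and by Proposition \ref{lemma::harmonic kernel} it is harmonic on $\D\setminus\{0\}$; a radially symmetric harmonic function solves the radial Laplace equation $g''(r)+g'(r)/r=0$, whose general solution is $g(r)=\alpha\log r+\beta$. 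Therefore $\tilde K_2^{\D}(0,y)=\alpha\log|y|+\beta$ for constants $\alpha,\beta$.

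It remains to identify the constants. For the additive constant, I would use Remark \ref{remark::circ_avg_int_form}, which expresses $\E[(h^\D_\eps(0))^2]$ as the integral of $\tilde K_2^{\D}(0,\cdot)$ against harmonic measure from $0$ on $\partial B_0(\eps)$. Since that harmonic measure is uniform on the circle and the kernel is radial, the integral is simply $g(\eps)=\alpha\log\eps+\beta$. As $\eps\to1$ we have $g(\eps)\to\beta$, so it suffices to show $\E[(h^\D_\eps(0))^2]\to0$. To this end I recall that $h^\D_\eps(0)$ is the $L^2$-limit (over the mollification parameter $\delta$) of $(h^\D,\psi)$ for nonnegative radial mass-one test functions $\psi$ supported in a thin annulus near $\partial B_0(\eps)$. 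Choosing $\eps_n\to1$ and, by a diagonal argument, a single sequence $f_n$ of such functions whose variances approximate $g(\eps_n)$, the supports of the $f_n$ eventually leave every compact $M\Subset\D$. The Dirichlet boundary assumption (Assumption \ref{ass:ci_dmp}(ii)) then forces $\Var((h^\D,f_n))\to0$, whence $\beta=\lim_n g(\eps_n)=0$.

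Finally, the sign of $\alpha$ comes for free: $g(\eps)=\E[(h^\D_\eps(0))^2]\ge0$ while $\log\eps<0$ for $\eps\in(0,1)$, so $\alpha\le0$. Setting $a=-\alpha\ge0$ gives $\tilde K_2^{\D}(0,y)=-a\log|y|$, as claimed (with $a>0$ unless the field is degenerate, in which case $\tilde K_2^{\D}\equiv0$ and $h^\D$ is a.s. the zero field). The only genuinely delicate point I anticipate is the passage from the Dirichlet boundary assumption, which is phrased for a sequence of test functions, to the vanishing of the circle-average variance as $\eps\to1$: one must interchange the mollification limit defining $(h^\D,\rho_0^{\partial B_0(\eps)})$ with the limit $\eps\to1$, and the cleanest route is the diagonal selection of the $f_n$ above, using the monotonicity of harmonic-average variances from Lemma \ref{lemma::harm_av_monotone} to control the double limit.
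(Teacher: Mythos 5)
Your proof is correct, and all the ingredients you invoke (Propositions \ref{lemma::harmonic kernel} and \ref{lemma:conformal_invariance}, Lemma \ref{lemma::harmonic_kernel_proof}, Remark \ref{remark::circ_avg_int_form}, Lemma \ref{lemma::harm_av_monotone}, Assumption \ref{ass:ci_dmp}(ii)) are established before this lemma, so there is no circularity -- but your route is genuinely different from the paper's. The paper never solves the radial Laplace equation: it sets $f(r):=\E[h_r^\D(0)^2]$ and uses the domain Markov property together with scale invariance to derive the multiplicative Cauchy equation $f(rs)=f(r)+f(s)$, which, combined with continuity, monotonicity (Lemma \ref{lemma::harm_av_monotone}) and $f(1)=0$, forces $f(r)=-a\log r$; the transfer to the kernel via Remark \ref{remark::circ_avg_int_form} and rotational invariance is then the same step you perform. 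You instead read off the form $\alpha\log r+\beta$ directly from harmonicity plus rotational symmetry, and then must do extra work to kill $\beta$: your diagonal selection of annular test functions feeding into Assumption \ref{ass:ci_dmp}(ii) is a valid way to do this, whereas the paper's corresponding boundary input, $f(1)=0$, comes more cheaply (the Markov decomposition of $h^\D$ inside $\D$ itself is trivial, so $h^\D_1(0)=\varphi^\D_\D(0)=0$). In effect you have written out the ``harmonic $+$ radial $+$ zero boundary $\Rightarrow$ multiple of $\log$'' characterisation that the paper's outline cites as the intuition while choosing ``an essentially equivalent but slightly shorter route''; the functional-equation proof buys brevity, while yours makes the roles of harmonicity and of the Dirichlet assumption more transparent, and uses the Markov/scaling structure only through results already proved. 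A final minor point in your favour: your explicit treatment of the degenerate case is more careful than the paper's, which asserts $a>0$ here but only $a\ge 0$ in Corollary \ref{cor::k_green}; the zero field does satisfy all the assumptions, so the sign conclusion can only be $a\ge 0$, exactly as you state.
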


\begin{proof}

First, we prove that there exists an $a>0$ such that $f(r):= \E[h_r^\D(0)^2]$ is equal to $-a\log(r)$ for all $r\in [0,1]$. To see this, note that by the domain Markov property and conformal invariance we have $f(rs)=f(r)+f(s)$ for all $r,s<1$. Moreover, $f$ is continuous (by Remark \ref{remark::circ_avg_int_form} and Lemma \ref{lemma::harmonic kernel}) and decreasing (by Lemma \ref{lemma::harm_av_monotone}), with $f(1)=0$. This proves the claim.

With this in hand, by Remark \ref{remark::circ_avg_int_form} we can write
	$$-a\log |y| = \E[(h_{|y|}^\D(0))^2]= \int_{\D } \tilde{K}_2^\D(0, w) \rho_0^{\partial B_{0}(|y|)}(dw),$$ where by conformal invariance (in particular, rotational invariance) $\tilde{K}_2^\D(0,w)$ must be constant and equal to $\tilde{K}_2^\D(0,|y|)$ on $\partial B_{0}(|y|)$. Since $\rho_0^{\partial B_{0}(|y|)}(\cdot)$ has total mass $1$ we obtain the result.
 \end{proof}

In particular, combining this with conformal invariance (Proposition \ref{lemma:conformal_invariance}) and Lemma \ref{lemma::circ_avg_good_approx}, we obtain:

\begin{corollary}\label{cor::k_green}
$\tilde{K}_2^D=aG^D$, where $G^D$ is the Green's function with zero boundary conditions and $a\ge 0$ is some constant.
\end{corollary}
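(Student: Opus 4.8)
The plan is to bootstrap from the special case already established in Lemma \ref{lem:k2_unit_disc}, namely that $\tilde{K}_2^{\D}(0,y) = -a\log|y|$, to the general statement $\tilde{K}_2^D = aG^D$ for an arbitrary simply connected domain $D$ and arbitrary pair of distinct points. The key observation is that the Dirichlet Green's function transforms under conformal maps exactly as $\tilde{K}_2^D$ was shown to in Proposition \ref{lemma:conformal_invariance}: both are conformally invariant as functions of two points. Explicitly, if $f : D \to \D$ is a conformal map then $G^D(z,w) = G^{\D}(f(z),f(w))$, and by Proposition \ref{lemma:conformal_invariance} we also have $\tilde{K}_2^D(z,w) = \tilde{K}_2^{\D}(f(z),f(w))$. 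So it suffices to identify the two kernels on the unit disc $\D$, for all pairs of points, not just pairs involving the origin.

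First I would reduce the general two-point statement on $\D$ to the base point computation. Given distinct $z,w \in \D$, let $g : \D \to \D$ be the Möbius automorphism of the disc sending $z \mapsto 0$ (composed with a rotation so that $g'(z)>0$). By conformal invariance of $\tilde{K}_2$ (Proposition \ref{lemma:conformal_invariance}) we get $\tilde{K}_2^{\D}(z,w) = \tilde{K}_2^{\D}(0, g(w)) = -a\log|g(w)|$, using Lemma \ref{lem:k2_unit_disc} in the last step. On the other hand, the explicit formula for the disc Green's function gives
\[
G^{\D}(z,w) = -\frac{1}{2\pi}\log\left| \frac{z-w}{1-\bar z w}\right| = -\frac{1}{2\pi}\log|g(w)|,
\]
since $g(w) = (w-z)/(1-\bar z w)$ up to a unimodular rotation factor that does not affect the modulus. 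Hence $\tilde{K}_2^{\D}(z,w) = 2\pi a\, G^{\D}(z,w)$ for all distinct $z,w \in \D$; absorbing the constant $2\pi$ into $a$ (and noting $a \ge 0$ by Lemma \ref{lemma::harm_av_monotone}, since variances are nonnegative), we obtain the identity on the disc.

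Finally I would transport this to a general domain $D$ by conformal invariance. For distinct $z,w \in D$, pick a conformal map $f : D \to \D$ (which exists by the Riemann mapping theorem since $D$ is proper simply connected). Then
\[
\tilde{K}_2^D(z,w) = \tilde{K}_2^{\D}(f(z),f(w)) = a\, G^{\D}(f(z),f(w)) = a\, G^D(z,w),
\]
where the first equality is Proposition \ref{lemma:conformal_invariance}, the second is the disc identity just proved, and the third is the conformal invariance of the Green's function. This gives $\tilde{K}_2^D = aG^D$ with the same universal constant $a$ for every $D$, as claimed.

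I do not expect any genuine obstacle here: the corollary is essentially a bookkeeping exercise combining the functional-equation computation of Lemma \ref{lem:k2_unit_disc} with the two conformal-invariance statements (one for $\tilde{K}_2$, Proposition \ref{lemma:conformal_invariance}, and the classical one for $G^D$). The only point requiring a little care is ensuring the multiplicative constant $a$ is truly the same for all domains, which is automatic precisely because everything is pulled back to the single base computation on $\D$ at the origin. The reference to Lemma \ref{lemma::circ_avg_good_approx} in the statement presumably serves to justify that the circle-average kernel $\tilde{K}_2^D$ defined via $h^D_\eps$ agrees with the mollified-field kernel, so that the harmonicity and conformal-invariance results of Section \ref{sec:harmonic} apply to the object under consideration.
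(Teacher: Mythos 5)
Your proof is correct and takes essentially the same route as the paper: the paper obtains Corollary \ref{cor::k_green} precisely by combining the base computation of Lemma \ref{lem:k2_unit_disc} with the conformal invariance of both $\tilde{K}_2$ (Proposition \ref{lemma:conformal_invariance}) and the Dirichlet Green's function, which is exactly what your M\"obius-map reduction and Riemann-map transport spell out. The only cosmetic difference is your choice of normalisation of $G^{\D}$ (the factor $2\pi$, absorbed into $a$), which is immaterial.
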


\subsection{The circle average approximates the field}

We conclude this section by showing that, in fact, the covariance kernel $K_2^D$ defined in Assumptions \ref{ass:ci_dmp} (which we recall is a bilinear form on $C_c^\infty(D)\times C_c^\infty(D)$) corresponds to integrating against the two-point function $\tilde{K}_2^D$. Thus, due to Corollary \ref{cor::k_green}, we can say that our field has ``covariance given by a multiple of the Green's function''. In particular, there exists $a>0$ such that for any test function $\phi \in C^\infty_c(D)$,
	\begin{equation}\label{eqn:varhDphi}
	\var (h^D,\phi) =  \iint_{D^2}  aG^D (x,y) \phi(x) \phi(y) dx dy.
	\end{equation}

\begin{lemma}\label{lemma::circ_avg_good_approx}
	For any $\psi_1,\psi_2 \in C_c^\infty(D)$
	$$ K^D_2(\psi_1,\psi_2)=\iint_{D^2} \tilde{K}_2^D(x,y) \psi_1(x)\psi_2(y) \, dx dy.$$
	In particular, if $h_\eps^D$ is the circle average field and $\psi\in C_c^\infty(D)$, then $$\Var ((h^D_\eps, \psi)-(h^D,\psi)) \to 0$$ as $\eps \to 0$. 	
\end{lemma}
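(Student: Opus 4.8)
The plan is to establish the kernel identity first and then deduce the $L^2$ approximation statement from it. For the identity, I would set $\psi_i^\eps(w):=\int_D \psi_i(x)\,\phi_\eps^x(w)\,dx$, the mollification of $\psi_i$ built from the bump functions of Definition~\ref{remark::K_alt_def}. First I would record the elementary but crucial interchange
\begin{equation*}
(h^D,\psi_i^\eps)=\int_D \psi_i(x)\,\tilde h_\eps^D(x)\,dx ,
\end{equation*}
which follows from linearity of $\phi\mapsto(h^D,\phi)$: one approximates $\int_D\psi_i(x)\phi_\eps^x\,dx$ by Riemann sums, which converge to $\psi_i^\eps$ in $C_c^\infty(D)$ (supports stay in a fixed compact set, all derivatives converge uniformly), and passes to the limit using the $L^2(\P)$-continuity of $\phi\mapsto(h^D,\phi)$ guaranteed by Assumption~\ref{ass:ci_dmp}(i). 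With this in hand, Fubini's theorem — justified by the uniform bound $|\E[\tilde h_\eps^D(x)\tilde h_\eps^D(y)]|\le C(l_2(x,y))^{1/2}$ of Proposition~\ref{prop:K_a.s._finite} and the integrability of $l_2$ against the compactly supported $\psi_1\otimes\psi_2$ — gives
\begin{equation*}
K_2^D(\psi_1^\eps,\psi_2^\eps)=\iint_{D^2}\psi_1(x)\psi_2(y)\,\E[\tilde h_\eps^D(x)\tilde h_\eps^D(y)]\,dx\,dy .
\end{equation*}

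Next I would let $\eps\to 0$ on both sides. On the left, $\psi_i^\eps\to\psi_i$ in $C_c^\infty(D)$, so by continuity of the bilinear form $K_2^D$ (Assumption~\ref{ass:ci_dmp}(i)) the left-hand side converges to $K_2^D(\psi_1,\psi_2)$. On the right, for each fixed $x\ne y$ the integrand is \emph{eventually constant} and equal to $\tilde K_2^D(x,y)$, by the observation following Definition~\ref{remark::K_alt_def}; hence it converges pointwise almost everywhere, and dominated convergence — using the $\eps$-uniform, integrable dominating function $C(l_2(x,y))^{1/2}$ from Proposition~\ref{prop:K_a.s._finite} — shows the right-hand side converges to $\iint_{D^2}\tilde K_2^D(x,y)\psi_1(x)\psi_2(y)\,dx\,dy$. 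Equating the limits yields the identity $K_2^D(\psi_1,\psi_2)=\iint_{D^2}\tilde K_2^D\psi_1\psi_2$, and hence \eqref{eqn:varhDphi} via Corollary~\ref{cor::k_green}.

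For the ``in particular'' statement I would expand, using that all the fields are centred,
\begin{equation*}
\Var\big((h_\eps^D,\psi)-(h^D,\psi)\big)=\E[(h_\eps^D,\psi)^2]-2\,\E[(h_\eps^D,\psi)(h^D,\psi)]+\E[(h^D,\psi)^2],
\end{equation*}
and show that all three terms converge to the common limit $L:=\iint_{D^2}\tilde K_2^D(x,y)\psi(x)\psi(y)\,dx\,dy$. The last term equals $L$ by the identity just proved. For the first, $\E[(h_\eps^D,\psi)^2]=\iint\psi(x)\psi(y)\E[h_\eps^D(x)h_\eps^D(y)]\,dx\,dy$, and I would split according to $|x-y|>2\eps$ and $|x-y|\le 2\eps$: on the former region $\E[h_\eps^D(x)h_\eps^D(y)]=\tilde K_2^D(x,y)$ exactly by Lemma~\ref{lemma::kernel_function}, so this part tends to $L$ by dominated convergence; on the latter region, of area $O(\eps^2)$, Cauchy--Schwarz with the variance bounds of Lemma~\ref{Variance} and Corollary~\ref{cor:var_near_boundary} bounds the integrand by $O(\log(1/\eps))$, giving a contribution $O(\eps^2\log(1/\eps))\to 0$. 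For the cross term I would write $\E[(h_\eps^D,\psi)(h^D,\psi)]=\int_D\psi(x)\,\E[h_\eps^D(x)(h^D,\psi)]\,dx$, evaluate the inner expectation by approximating $h_\eps^D(x)$ by $(h^D,\hat\psi_x^\delta)$, applying the kernel identity, and letting $\delta\to 0$; exactly as in the proof of Lemma~\ref{lemma::variance_harm_avg}, harmonicity of $\tilde K_2^D(\cdot,v)$ off the diagonal (Proposition~\ref{lemma::harmonic kernel}) turns $\int \tilde K_2^D(u,v)\rho_x^{\partial B_x(\eps)}(du)$ into $\tilde K_2^D(x,v)$ for $v\notin B_x(\eps)$, while the $v\in B_x(\eps)$ part again contributes $O(\eps^2\log(1/\eps))$. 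Thus the cross term also tends to $L$, and the three limits combine to give $L-2L+L=0$.

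The main obstacle will be the uniform control near the diagonal. The integrands $\E[\tilde h_\eps^D(x)\tilde h_\eps^D(y)]$ and $\E[h_\eps^D(x)h_\eps^D(y)]$ are only \emph{eventually} equal to $\tilde K_2^D(x,y)$, with the threshold on $\eps$ degenerating as $x\to y$, and near the diagonal they can be as large as $\log(1/\eps)$. Every passage to the limit therefore rests on the a priori logarithmic estimates of Proposition~\ref{prop:K_a.s._finite} and Lemma~\ref{Variance}, which simultaneously supply an $\eps$-uniform integrable dominating function and make the diagonal contribution negligible; this is precisely where those bounds are needed. A secondary technical point is the Riemann-sum justification of the interchange of $(h^D,\cdot)$ with the spatial integral, which relies on the $L^2$-continuity built into Assumption~\ref{ass:ci_dmp}(i).
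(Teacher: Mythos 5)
Your proposal is correct, and it reaches the lemma by a route that genuinely differs from the paper's in its technical underpinnings. For the kernel identity both proofs hinge on the same step, namely identifying $\iint_{D^2} K_2^D(\psi_1(x)\phi_\eps^x,\psi_2(y)\phi_\eps^y)\,dx\,dy$ with $K_2^D(\psi_1*\phi_\eps,\psi_2*\phi_\eps)$ and then letting $\eps\to 0$ using continuity of $K_2^D$, the convergence $\psi_i*\phi_\eps\to\psi_i$ in $C_c^\infty(D)$, and dominated convergence via Proposition \ref{prop:K_a.s._finite}; but the paper justifies that identification abstractly, viewing $f\mapsto K_2^D(f,\cdot)$ as a distribution and citing associativity of convolution \cite[Theorem 6.30]{rudin}, whereas you prove the equivalent stochastic-Fubini statement $(h^D,\psi_i*\phi_\eps)=\int_D\psi_i(x)\tilde h_\eps^D(x)\,dx$ by hand, via Riemann sums converging in $C_c^\infty(D)$ and the $L^2(\P)$-continuity of $\phi\mapsto(h^D,\phi)$. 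Your route is more elementary and self-contained, at the cost of the measurability and Riemann-sum bookkeeping you rightly flag. The divergence is greater in the ``in particular'' part: the paper rewrites all three terms of the expanded variance as values of $K_2^D$ at mollified test functions and concludes by continuity of the bilinear form alone---note that in doing so it implicitly replaces the circle average field $h_\eps^D$ by the mollified field $\tilde h_\eps^D$. Your three-term argument instead treats the circle average field literally: off the diagonal you use the exact identity $\E[h_\eps^D(x)h_\eps^D(y)]=\tilde K_2^D(x,y)$ of Lemma \ref{lemma::kernel_function}, near the diagonal you trade an $O(\log(1/\eps))$ bound against an $O(\eps^2)$ area, and the cross term is handled by the $\delta$-approximation plus harmonicity exactly as in Lemma \ref{lemma::variance_harm_avg}. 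This is longer, but it closes the small gap between $h_\eps^D$ and $\tilde h_\eps^D$ that the paper elides. One citation should be adjusted: Lemma \ref{Variance} and Corollary \ref{cor:var_near_boundary} bound the variance of the \emph{mollified} field, not of the circle average; for $h_\eps^D(z)=\varphi_D^{B_z(\eps)}(z)$ the same $O(\log(1/\eps))$ bound follows either from Remark \ref{remark::circ_avg_int_form} together with Corollary \ref{cor::k_green}, or from the observation that the domain Markov decomposition and radial symmetry give $\tilde h_\eps^D(z)=(h_D^{B_z(\eps)},\phi_\eps^z)+h_\eps^D(z)$ with independent centred summands, whence $\Var(h_\eps^D(z))\le\Var(\tilde h_\eps^D(z))$.
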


We will need this last statement for the conclusion of the proof: see Section \ref{sec:Concl}.
\begin{proof}
	We have, by Proposition \ref{prop:K_a.s._finite} and dominated convergence (for this we use that $\psi_1$ and $\psi_2$ are compactly supported, meaning that for some $\eps_0>0$, $B_x(\eps_0)\subset D$ for all $x\in \text{Support}(\psi_1)\cap \text{Support}(\psi_2)$),
	\begin{eqnarray*} \iint_{D^2} \tilde K_2^D(x,y)\psi_1(x)\psi_2(y) \, dx dy & = & \iint_{D^2} \lim_{\eps\to 0 } \E[\tilde{h}_\eps^D(x)\tilde{h}_\eps^D(y)] \psi_1(x) \psi_2(y) \, dx dy \\
		& = & \lim_{\eps \to 0 }\iint_{D^2}  \E[\tilde{h}_\eps^D(x)\tilde{h}_\eps^D(y)] \psi_1(x) \psi_2(y) \, dx dy \\
		& = & \lim_{\eps\to 0} \iint_{D^2} K_2^D( \psi_1(x)\phi_{\eps}^x,  \psi_2(y)\phi_{\eps}^y) \, dx dy ,
 \end{eqnarray*}
	where the last line follows from definition of $K_2^D$. Here $K_2^D( \psi_1(x)\phi_{\eps}^x,\psi_2(y)\phi_\eps^y )$ means the value of $K_2^D(f,g)$ where $f:z\mapsto \phi_{\eps}^x(z)\psi_1(x)$ and $g: z\mapsto \phi_{\eps}^y(z)\psi_1(y)$ are both in $ C_c^\infty(D)$. Now we use the fact that $K_2^D$ is a continuous bilinear form on $C_c^\infty(D)\times C_c^\infty(D)$ with the topology discussed in the introduction. This means that if we fix $y\in D$ and consider the map $f\mapsto K_2^D(f,\phi_\eps^y\psi_2(y) )$, then this is a continuous linear map on $C_c^\infty(D)$ i.e. it is a distribution. Standard theory of distributions (associativity of convolution, see for example \cite[Theorem 6.30]{rudin}), then tells us that
	\begin{equation}\label{eqn::conv_dist}
	\int_D K_2^D(\phi_\eps^x \psi_1(x), \phi^y_\eps\psi_2(y)) \, dx = K_2^D(\psi_1 * \phi_\eps, \phi^y_\eps\psi_2(y))
	\end{equation}
	where $\psi_1 * \phi_{\eps}(z)=\int_D \psi_1 (x) \phi_{\eps}^z (x) \, dx$. Now applying the same argument in the $y$-variable gives that the right hand side of (\ref{eqn::conv_dist}) is equal to $K_2^D(\psi_1 * \phi_{\eps}, \psi_2* \phi_{\eps})$, and we have overall attained the equality
	\begin{equation}\label{eqn::conv_dist_2}
	\iint_D \tilde K_2^D(x,y)\psi_1(x)\psi_2(y) \, dx dy = \lim_{\eps\to 0} K_2^D(\psi_1 * \phi_{\eps}, \psi_2* \phi_{\eps}).
	\end{equation}
	Finally, since $\psi_i*\phi_{\eps}\to \psi_i$ in $C_c^\infty(D)$ for each $i$ as $\eps\to 0$ \cite[\S5.3, Theorem 1]{evans}, and $K_2^D$ is continuous, we can deduce the result.
	
	For the statement concerning the variance, we expand
	\begin{align*} \Var ((h^D_\eps, \psi)-(h^D,\psi)) & = \E[(h^D_\eps,\psi)^2]+\E[(h^D,\psi)^2]-2\E[(h^D_\eps,\psi)(h^D,\psi)] \\
	& = \iint K_2^D( \psi(x)\phi_{\eps}^x,  \psi(y)\phi_{\eps}^y) \, dx dy+ K_2^D(\psi,\psi)-2\int K_2^D(\psi(x)\phi_\eps^x,\psi) \, dx \\
	& = K_2^D(\psi * \phi_{\eps},\psi * \phi_{\eps})+K_2^D(\psi,\psi)-2K_2^D(\psi*\phi_{\eps},\psi)
	\end{align*} where the final equality follows by the same reasoning that led us to \eqref{eqn::conv_dist_2}. Again, since $\psi * \phi_\eps \to \psi$ in $C_c^\infty(D)$ as $\eps\to 0$, this allows us to conclude that the final expression converges to $0$ as $\eps\to 0$.
\end{proof}

Similarly, we deduce the following:

\begin{lemma}\label{lemma::4_point_kernel}
	For any $\psi_1,\cdots, \psi_4 \in C_c^\infty(D)$
	$$ K_4^D(\psi_1,\psi_2,\psi_3,\psi_4)=\iint_{D^4} \tilde{K}_4^D(x_1,x_2,x_3,x_4) \prod_{1\le i \le 4} \psi_i(x_i) \, dx_i.$$ 	
\end{lemma}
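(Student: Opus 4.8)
The plan is to follow the structure of the proof of Lemma \ref{lemma::circ_avg_good_approx}, but with one crucial additional input. The two-point argument relied throughout on the continuity of the bilinear form $K_2^D$, which is postulated in Assumptions \ref{ass:ci_dmp}. Its four-point analogue requires the continuity of the quadrilinear form $K_4^D$, which is \emph{not} assumed directly. So the first step is to upgrade the moment assumptions to show that the map $T: \phi \mapsto (h^D,\phi)$ is continuous from $C_c^\infty(D)$ into $L^4(\P)$. I would do this via the closed graph theorem applied on each Fréchet piece $C_M^\infty(D)$ (smooth functions supported in a fixed compact $M\Subset D$): if $\phi_n\to\phi$ in $C_M^\infty(D)$ and $(h^D,\phi_n)\to Y$ in $L^4(\P)$, then in particular $(h^D,\phi_n)\to Y$ in $L^2(\P)$; but $T$ is already $L^2$-continuous, since $\|(h^D,\phi_n)-(h^D,\phi)\|_{L^2}^2 = K_2^D(\phi_n-\phi,\phi_n-\phi)\to 0$ by continuity of $K_2^D$, whence $Y=(h^D,\phi)$ a.s. Thus the graph is closed and $T$ is continuous on each $C_M^\infty(D)$, hence on the inductive limit $C_c^\infty(D)$.

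With $L^4$-continuity in hand, two consequences follow from H\"older's inequality. First, for fixed $g_2,g_3,g_4\in C_c^\infty(D)$ the functional $\phi\mapsto K_4^D(\phi,g_2,g_3,g_4)=\E[(h^D,\phi)\prod_{i\geq 2}(h^D,g_i)]$ is a continuous linear functional, i.e.\ a distribution, since it is bounded by $\|(h^D,\phi)\|_{L^4}\,\|\prod_{i\geq2}(h^D,g_i)\|_{L^{4/3}}$. Second, whenever $\psi_i*\phi_\eps\to\psi_i$ in $C_c^\infty(D)$ for each $i$, the factors $(h^D,\psi_i*\phi_\eps)$ converge to $(h^D,\psi_i)$ in $L^4(\P)$, and a standard telescoping estimate shows the product $\prod_i(h^D,\psi_i*\phi_\eps)$ converges in $L^1(\P)$ to $\prod_i(h^D,\psi_i)$. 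With these tools the computation runs exactly parallel to Lemma \ref{lemma::circ_avg_good_approx}: using the four-point bound of Proposition \ref{prop:K_a.s._finite} together with the compact support of the $\psi_i$, dominated convergence gives $\iint_{D^4}\tilde K_4^D(x_1,\ldots,x_4)\prod_i\psi_i(x_i)\,dx_i=\lim_{\eps\to0}\iint_{D^4}\E[\prod_i\tilde h_\eps^D(x_i)]\prod_i\psi_i(x_i)\,dx_i$, and rewriting the integrand via Definition \ref{remark::K_alt_def} and multilinearity turns it into $\iint_{D^4}K_4^D(\psi_1(x_1)\phi_\eps^{x_1},\ldots,\psi_4(x_4)\phi_\eps^{x_4})\,dx_1\cdots dx_4$.

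Applying the associativity-of-convolution identity (\cite[Theorem 6.30]{rudin}) successively in each of the four variables — legitimate precisely because $K_4^D$ is a distribution in each slot separately — rewrites this as $K_4^D(\psi_1*\phi_\eps,\psi_2*\phi_\eps,\psi_3*\phi_\eps,\psi_4*\phi_\eps)$. Finally, since $\psi_i*\phi_\eps\to\psi_i$ in $C_c^\infty(D)$ (\cite[\S5.3, Theorem 1]{evans}), the $L^1$-convergence of products noted above yields $K_4^D(\psi_1*\phi_\eps,\ldots,\psi_4*\phi_\eps)=\E[\prod_i(h^D,\psi_i*\phi_\eps)]\to\E[\prod_i(h^D,\psi_i)]=K_4^D(\psi_1,\ldots,\psi_4)$, which is the claim. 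The main obstacle is the $L^4$-continuity of the field established in the first paragraph, since the hypotheses only supply second-moment continuity; once it is secured the four-point argument is, if anything, cleaner than the two-point one, because the $L^1$-convergence of products lets one pass to the limit in all four arguments simultaneously, bypassing any need for \emph{joint} continuity of $K_4^D$.
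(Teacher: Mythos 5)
Your proposal is correct, and its skeleton is exactly the one the paper intends: the paper gives no separate proof of Lemma \ref{lemma::4_point_kernel}, saying only ``Similarly, we deduce'' with reference to the proof of Lemma \ref{lemma::circ_avg_good_approx}. But you have identified and repaired a real gap in that ``similarly''. The two-point proof leans twice on the \emph{assumed} continuity of the bilinear form $K_2^D$: once to make $f\mapsto K_2^D(f,\cdot)$ a distribution so that the associativity-of-convolution identity applies, and once for the final limit $K_2^D(\psi_1*\phi_\eps,\psi_2*\phi_\eps)\to K_2^D(\psi_1,\psi_2)$. The assumptions provide no such continuity for $K_4^D$ (the remark following Assumptions \ref{ass:ci_dmp} asserts only its existence as a quadrilinear form), and one cannot get it for free from what is assumed: knowing $(h^D,\phi_n)\to 0$ in $L^2(\P)$ and pairing against $\prod_{i\ge 2}(h^D,g_i)\in L^{4/3}(\P)$ does not give convergence of the expectation, since $\tfrac12+\tfrac34>1$; some uniform control of fourth moments along converging sequences of test functions is genuinely needed. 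Your closed graph argument supplies exactly this: $T:\phi\mapsto (h^D,\phi)$ has closed graph from each Fr\'echet piece $C_M^\infty(D)$ into $L^4(\P)$ because $L^4$-limits are also $L^2$-limits and $T$ is $L^2$-continuous by the assumed continuity of $K_2^D$, whence $T$ is $L^4$-continuous on the inductive limit. From there your two consequences (separate continuity of $K_4^D$ in each slot via H\"older, needed for the Rudin step, and $L^1$-convergence of products of $L^4$-convergent factors, needed for the final limit) are both correct, and the rest of the computation — dominated convergence via Proposition \ref{prop:K_a.s._finite}, multilinearity, and successive application of associativity of convolution in each variable — runs parallel to the two-point case just as you describe. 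In short: same route as the paper, plus a necessary ingredient the paper's terse treatment omits; a referee filling in the paper's ``similarly'' would have to do something equivalent to your first paragraph.
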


\begin{remark}
	Lemma \ref{lemma::circ_avg_good_approx} and Corollary \ref{cor::k_green} imply that Assumptions \ref{ass:ci_dmp} (ii) (Dirichlet boundary conditions) is satisfied by a much wider family of test functions $f_n$: in particular the assumption that $f_n$ be rotationally symmetric in this assumption can be partly relaxed (however $f_n$ cannot be completely arbitrary, i.e., it is not sufficient to assume that the support of $f_n$ leaves any compact and that $f_n$ has bounded mass, as can be seen by considering $f_n$ to have unit mass within a ball of radius $1/n$ at distance $1/n$ from the boundary).
\end{remark}

%
%

\section{Gaussianity of the circle average}\label{sec::gaussianity}

In this section, we argue that from Assumptions \ref{ass:ci_dmp}, we can deduce that the circle average field of $h^D$ is Gaussian. This is where we will need to use our finite fourth moment assumption. Let $(h_\eps^D(z))_{z\in D}$ be the circle average field. The key result we prove here is the following:

\begin{prop}\label{prop:circle_avg_pointwise_Gaussian}
Let $z_1,\ldots,z_k$ be $k$ pairwise distinct points in $D$ with $d(z_i,z_j)>2\eps$ for every $1\leq i \ne j\leq k$ and $d(z_i,\partial D)>2\eps$ for $1\le i \le k$. Then the law of $(h^D_\ve(z_1),\ldots,h^D_\ve(z_k))$ is that of a multivariate Gaussian random variable.
\end{prop}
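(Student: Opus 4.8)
The plan is to prove Gaussianity of the finite-dimensional vector $(h^D_\ve(z_1),\ldots,h^D_\ve(z_k))$ by combining the domain Markov decomposition (which exhibits each circle average as a sum of many independent pieces) with a martingale/central-limit argument, using the fourth moment control from Proposition \ref{prop:K_a.s._finite} to verify a Lyapunov-type condition. The key structural input is the iterated decomposition already used in the proof of Lemma \ref{Variance}: fixing a point $z$, and peeling off annuli $B_z(2^{-n}) \supset B_z(2^{-n-1}) \supset \cdots$, the domain Markov property writes $h^D_\ve(z)$ (once $\ve$ is pushed down to scale $2^{-N}$) as a sum $\sum_{j} X_j$ of independent, mean-zero harmonic increments, one per dyadic scale, each of uniformly bounded variance and bounded fourth moment. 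So I would first establish the right joint version of this: I want to decompose all $k$ circle averages \emph{simultaneously} against a common refining sequence of scales, so that the contributions at the coarsest scales are shared (correlating the different $z_i$) while the contributions at fine scales near each individual $z_i$ are independent across $i$.

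**The main steps.**

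First I would set up a common nested family of domains $D = U_0 \supset U_1 \supset \cdots \supset U_m$, where the early $U_\ell$ are large domains shrinking toward the configuration and the later stages split into disjoint Jordan neighbourhoods $B_{z_1}(r_\ell),\ldots,B_{z_k}(r_\ell)$ around the individual points (possible once $r_\ell < \tfrac12\min_{i\ne j}|z_i-z_j|$, using Lemma \ref{lemma::kernel_function}'s disjointness setup). Iterating the domain Markov property along this family expresses the vector $\mathbf{h}_\ve := (h^D_\ve(z_1),\ldots,h^D_\ve(z_k))$ as a finite sum $\mathbf{h}_\ve = \sum_{\ell} \mathbf{Y}_\ell$ of \emph{independent} $\R^k$-valued mean-zero increments, where each $\mathbf{Y}_\ell$ records the harmonic-average contribution of the $\ell$-th shell evaluated at each $z_i$. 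Second, I would refine the dyadic scales into $n$ sub-scales each and let $n \to \infty$; this turns $\mathbf{h}_\ve$ into a sum of a growing number of independent summands, each of variance of order $1/n$ (by the additivity $f(rs)=f(r)+f(s)$ from Lemma \ref{lem:k2_unit_disc} and its conformal-covariance analogue) and fourth moment of order $1/n$ as well (by the $\log^2 + \log$ bound of Lemma \ref{Variance}). Third, I would apply a multivariate Lyapunov central limit theorem to the triangular array $\{\mathbf{Y}_\ell^{(n)}\}$: the Lyapunov ratio $\sum_\ell \E|\mathbf{Y}_\ell^{(n)}|^4 \big/ \big(\sum_\ell \E|\mathbf{Y}_\ell^{(n)}|^2\big)^2$ is of order $(n \cdot \tfrac1n) / (n\cdot \tfrac1n)^2 = 1/n \to 0$, so the limit is multivariate Gaussian. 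Since the sum $\mathbf{h}_\ve$ itself does \emph{not} change as we refine (the refinement only regroups the same field into more summands), the fixed vector $\mathbf{h}_\ve$ is seen to be the limit of asymptotically-Gaussian sums, hence exactly Gaussian.

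**The main obstacle.**

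The delicate point is making the refinement genuinely produce an array with \emph{many} summands of \emph{comparable, vanishing} size while keeping the fourth-moment-to-second-moment ratio controlled uniformly and jointly across the $k$ coordinates. Scale additivity gives the variance of each sub-increment as exactly $\Var(\ph^{B_0(1/2)}_{\D}(0))/n$ at a fixed point, but I must check that the \emph{cross}-covariances between distinct $z_i$ behave consistently under refinement and that the off-diagonal entries of the limiting covariance match $a\,\tilde K_2^D(z_i,z_j)$ — this is exactly where Lemma \ref{lemma::kernel_function} and Corollary \ref{cor::k_green} feed in. The fourth-moment bound from Lemma \ref{Variance} is what rules out the "Poissonian jump" pathology flagged in the introduction: without it a single scale could carry a non-negligible, non-Gaussian increment and the Lyapunov condition would fail. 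I therefore expect the heart of the argument to be verifying the Lyapunov (or Lindeberg) condition jointly, i.e.\ controlling $\E|\mathbf{Y}_\ell^{(n)}|^4$ uniformly via the conformal-invariance/scaling structure of the harmonic increments, rather than the CLT bookkeeping itself. A subsidiary technical care is the treatment of the coarsest shells and the innermost shell nearest each $z_i$ (the ``$\ph^{B_N}_D$'' and ``$\tilde h$'' boundary terms in Lemma \ref{Variance}), whose variances are $O(1)$ and therefore must be absorbed as a fixed finite number of extra independent summands that do not affect the Lyapunov limit but may contribute Gaussian mass; one checks these are themselves Gaussian in the limit or treats them as a bounded additive correction.
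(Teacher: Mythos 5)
There is a genuine gap, and it sits exactly where you locate the heart of your argument: the Lyapunov condition cannot be verified from the estimates you invoke. First, the arithmetic: if each of the $\sim n$ sub-increments has fourth moment of order $1/n$, then $\sum_\ell \E|\mathbf{Y}_\ell^{(n)}|^4$ is of order $n\cdot\tfrac1n=1$, and since the total variance is fixed, the Lyapunov ratio is of order $1$, not $1/n$; your displayed computation $(n\cdot\tfrac1n)/(n\cdot\tfrac1n)^2=1/n$ is simply $1/1=1$. Second, and more seriously, this is not a repairable slip within your toolkit. The bound of Lemma \ref{Variance} has the form $C(\log^2(R/r)+\log(R/r))$, and the \emph{linear} term is the problem: after refining a dyadic scale into $n$ sub-scales, conformal invariance plus Proposition \ref{prop:K_a.s._finite} give each sub-increment a fourth moment of order $\log^2(2)/n^2+\log(2)/n\asymp 1/n$, so the Lyapunov sum stays bounded away from $0$. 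This is precisely the ``Poissonian jump'' pathology: a compensated Poisson variable of variance $t$ has fourth moment $3t^2+t$, so a field whose scale-increments are compensated-Poisson-type satisfies \emph{every} bound you use (including Lemma \ref{Variance} verbatim), has independent mean-zero increments across scales, and is not Gaussian. Your claim that ``the fourth-moment bound from Lemma \ref{Variance} is what rules out the Poissonian jump pathology'' is therefore exactly backwards: that bound is incapable of distinguishing Gaussian from Poissonian increments.

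What is missing is the improved near-boundary estimate, Proposition \ref{lemma::voronoi_estimates}, whose whole point (as the paper remarks) is that the \emph{Dirichlet boundary condition} improves the per-increment fourth moment from order $\ve$ to order $\ve^{2-\eta}$ (see \eqref{eq:four_integral_bound}). With that input your strategy could be completed: each of the $n$ sub-increments would have fourth moment $O(n^{-(2-\eta)})$, the Lyapunov sum would be $O(n^{\eta-1})\to 0$, and a multivariate CLT for a triangular array summing to the \emph{fixed} vector would indeed force Gaussianity — a legitimate alternative ending to the paper's, which instead uses the same estimate to verify a Kolmogorov continuity criterion for the process of harmonic averages along a continuously growing family of analytic domains (Lemma \ref{lemma::approximating_analytic_domains}) and then concludes via the Dubins--Schwarz theorem, finally transferring from analytic domains to circles by an $L^2$ approximation (Lemma \ref{lem:dnclose}). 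But as written, your proposal never derives or invokes this refined estimate, and without it the central step fails.
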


\subsection{Bounds for the 4 point kernel}
\label{sec:expression_kernel}

Let $z_1,\cdots, z_4$ be pairwise distinct points in $D=\D$ and let $$V_i = \{y\in \D: |u(y)-u(z_i)|_1<|u(y)-u(z_j)|_1 \; \forall j\ne i\}$$ for $1\leq i\leq 4$, where $u(x)=x/|x|$ and $|\cdot |_1$ is distance (with respect to arc length) on the unit circle. In words, we divide the disc into four wedges each containing one of the four distinguished points. By definition, the boundary between two adjacent wedges $V_i$ and $V_j$ is the ray emanating from the origin which bissects the rays going through $z_i$ and $z_j$.

\begin{figure}[h]
	\centering
	\includegraphics[scale=0.5]{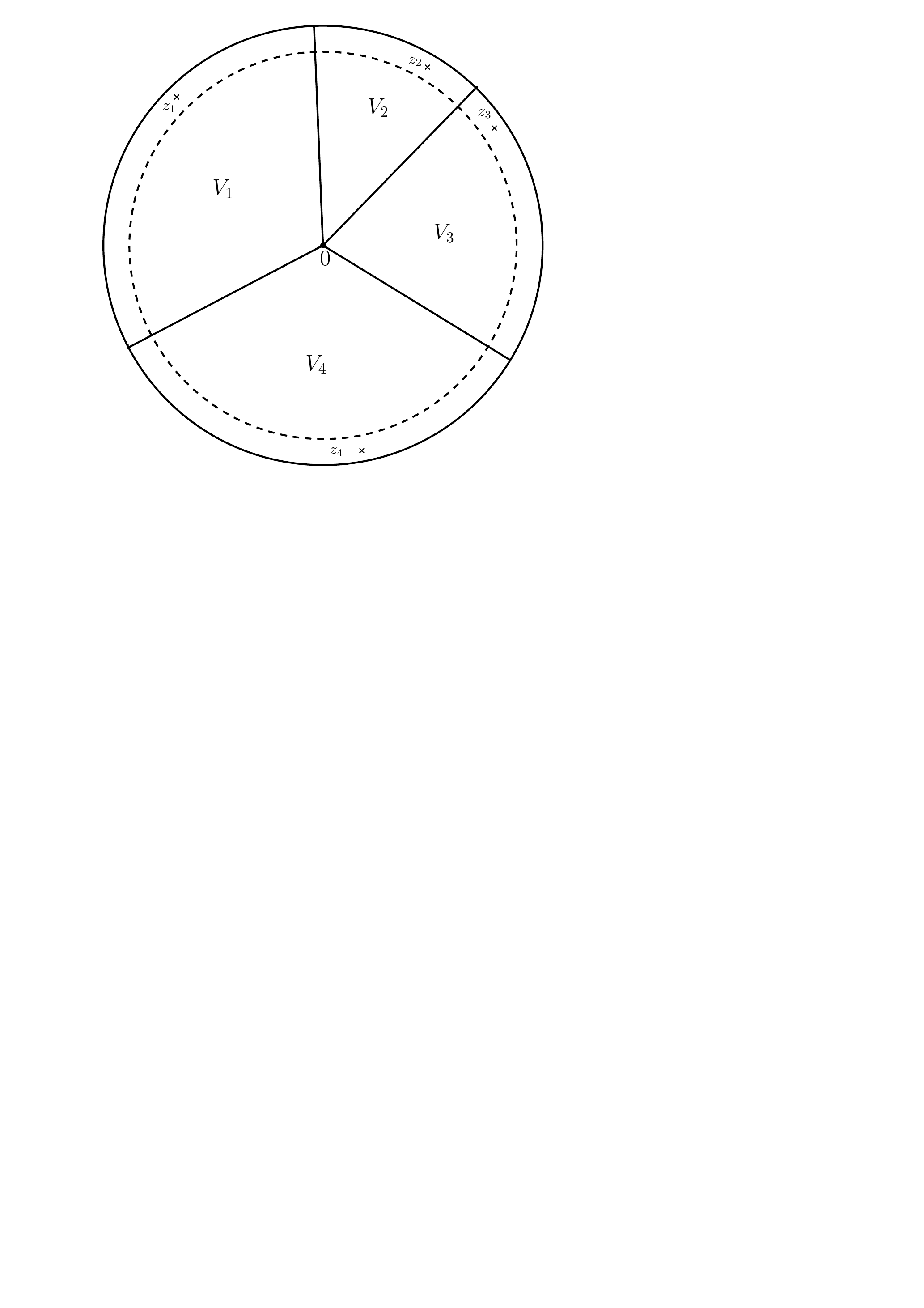}
	\caption{The cells $V_1,\cdots, V_4$}
\end{figure}

We have (by definition of the harmonic average, and Lemma \ref{lemma::fourpointexpression}) the following expression for the four point kernel:

	\begin{equation*}
 \tilde K_4^\D(z_1,z_2,z_3,z_4) = \E\left[\prod_{i=1}^4(h^\D,\rho^{\partial V_i}_{z_i}) \right]
	\end{equation*}

In the next section, we will require some bounds on these quantities when the $z_i$'s are close to the boundary of $\D$. We can estimate them as follows:

\begin{prop}\label{lemma::voronoi_estimates}
Suppose that $z_1,\cdots, z_4$ are pairwise distinct points in $\C$, each with modulus between $1-\eps$ and $1$. Then if $V_j$ is as described above (with respect to $z_1,\cdots, z_4$) and $a_j=\min_{i\ne j} \{|u(z_j)-u(z_i)|_1\}$ is the isolation radius of $u(z_j)$ in $\{u(z_1),\cdots, u(z_4)\}$ we have
$$ \mathbb{E}[(h^\D,\rho_{z_j}^{\partial V_j})^4]\leq c(\frac{\eps^4}{a_j^4}\wedge 1)\log^4(a_j)$$ for some universal constant $c$.
\end{prop}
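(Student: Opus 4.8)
The plan is to bound the fourth moment of the harmonic average $(h^\D,\rho_{z_j}^{\partial V_j})=\varphi_\D^{V_j}(z_j)$ by isolating the only part of $\partial V_j$ on which the field genuinely fluctuates. Write $\partial V_j=A\cup L$, where $A=\partial V_j\cap\partial\D$ is the outer circular arc and $L$ is the union of the two radial segments (the bisecting rays) bounding the sector $V_j$. Up to a radial distance $\le\eps$ the point $z_j$ sits on $A$, while its distance to the nearer radial segment is of order $a_j$ (the nearest bisector is at angular distance $a_j/2$ from $u(z_j)$). The two ingredients will be: (i) a deterministic harmonic--measure estimate showing that $L$ carries little mass as seen from $z_j$; and (ii) the fact that, by the zero boundary conditions, the field contributes nothing on $A$, so that the fourth moment reduces to an integral of the four--point function $\tilde K_4^\D$ over $L$ only.

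For (i) I would map the sector $V_j$ to a half--strip by $w\mapsto\log w$: writing $w=re^{i\theta}$, this sends $V_j$ to $\{u+iv:u<0,\ |v|<\beta\}$ with $2\beta\ge a_j$ the angular width, and sends $z_j$ to a point at distance $\asymp 1-r_j\le\eps$ from the short edge (image of $A$) and at distance $\asymp a_j$ from the nearer long edge (image of $L$). A one--dimensional / Beurling-type computation (or the explicit conformal map of the half--strip to a half--disc) then gives $\rho_{z_j}^{\partial V_j}(L)\le C(\eps/a_j)$, and trivially $\rho_{z_j}^{\partial V_j}(L)\le 1$; hence $\rho_{z_j}^{\partial V_j}(L)\le C(\tfrac{\eps}{a_j}\wedge 1)$.

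For (ii), recall that $\varphi_\D^{V_j}$ is almost surely a genuine harmonic function on $V_j$, and that its boundary trace vanishes on $A$: as an interior point approaches a point of $A\subset\partial\D$ its harmonic measure concentrates on $A$, where $\tilde K_2^\D=aG^\D$ vanishes (Corollary \ref{cor::k_green}), so $\E[\varphi_\D^{V_j}(w)^2]\to 0$. Consequently the Poisson representation of $\varphi_\D^{V_j}$ only sees the trace on $L$, and $\varphi_\D^{V_j}(z_j)=\int_L\varphi_\D^{V_j}\,d\nu$ with $\nu=\rho_{z_j}^{\partial V_j}|_L$. Integrating the four--fold product, and using that the trace correlations on $L$ are exactly $\tilde K_4^\D$ (Lemma \ref{lemma::fourpointexpression}), gives
\[
\E\big[\varphi_\D^{V_j}(z_j)^4\big]=\iiiint_{L^4}\tilde K_4^\D(w_1,w_2,w_3,w_4)\,\prod_{i=1}^4\nu(dw_i).
\]
The analogous computation for the second moment reproduces Lemma \ref{lemma::variance_harm_avg} via the identity $\int_L G^\D(\cdot,w)\,d\nu=G^\D(z_j,w)$ (valid since $G^\D(\cdot,w)$ is harmonic in $V_j$ and vanishes on $A$), a useful consistency check. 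Finally, bounding $|\tilde K_4^\D|$ on $L^4$ by the a priori logarithmic estimate of Remark \ref{remark::K_4_neater} and integrating the log--singularities against $\nu$ (whose total mass is $\rho_{z_j}^{\partial V_j}(L)$ and which concentrates at scale $a_j$) yields $\E[\varphi_\D^{V_j}(z_j)^4]\le C\,\rho_{z_j}^{\partial V_j}(L)^4\,\log^2(1/a_j)$. Combining with (i) gives the claim, with room to spare in the logarithmic power.

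The main obstacle is making (ii) rigorous. The field has no classical trace, and the naive approximation by mollifiers $\hat\psi_{z_j}^\delta\rightharpoonup\rho_{z_j}^{\partial V_j}$ converges only in $L^2$: the part of the mollifier supported near $A$ has fourth moment of order one (it tests the field at vanishing scale against $\partial\D$), so $\E[\varphi_\D^{V_j}(z_j)^4]$ is \emph{not} the limit of $\E[(h^\D,\hat\psi_{z_j}^\delta)^4]$, and one cannot simply integrate $\tilde K_4^\D$ over all of $(\partial V_j)^4$. The correct statement is that $A$ drops out because the trace there is exactly zero, not merely small. To justify the displayed identity cleanly one works in the slightly shrunk sector $V_j^\eta$ obtained by deleting an $\eta$--neighbourhood of $L$ (so that $L$ becomes interior while the outer arc $A\subset\partial\D$, and hence the vanishing of the trace, is retained), uses monotonicity (Lemma \ref{lemma::harm_av_monotone}) to bound $\E[\varphi_\D^{V_j}(z_j)^4]\le\E[\varphi_\D^{V_j^\eta}(z_j)^4]$, and applies the above to $V_j^\eta$ with a bound uniform in $\eta$. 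Controlling the fourth moment---rather than the variance, which is immediate from $\tilde K_2^\D=aG^\D$---is precisely the delicate step, since the zero boundary assumption is a priori only a second--moment statement.
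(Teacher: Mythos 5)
Your overall strategy is the same as the paper's: isolate the two radial segments $L$ of $\partial V_j$, argue that the Dirichlet boundary condition kills the contribution of the arc $A\subset\partial\D$, bound the harmonic measure of $L$ seen from $z_j$ by $C(\tfrac{\eps}{a_j}\wedge 1)$, and integrate the logarithmic bound on $\tilde K_4^\D$ from Remark \ref{remark::K_4_neater} against the restricted measure. Your step (i) and your final integration are sound, and correspond respectively to the paper's explicit conformal-map bound on the density of the line-restricted harmonic measure and to its concluding computation.

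The genuine gap is exactly the step you flag as the main obstacle, and your proposed repair does not close it. You need the identity $\E\big[\varphi_\D^{V_j}(z_j)^4\big]=\iiiint_{L^4}\tilde K_4^\D(w_1,\ldots,w_4)\prod_{i=1}^4\nu(dw_i)$, i.e.\ that the arc contributes \emph{exactly} zero at the level of fourth moments; but the zero-boundary assumption, and everything derived from it (in particular $\tilde K_2^\D=aG^\D$), is purely an $L^2$ statement, and, as you yourself observe, mollifiers charging a neighbourhood of $A$ need not have small fourth moments. Passing to the shrunk sector $V_j^\eta$ does not help: deleting an $\eta$-neighbourhood of $L$ moves the lateral boundary into the interior of $\D$, but the problematic boundary piece is $A$, which by your own construction is retained in $\partial V_j^\eta$; so ``applying the above to $V_j^\eta$'' requires precisely the unproven identity again, and the monotonicity step merely transfers the problem from $V_j$ to $V_j^\eta$. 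The paper's resolution is to never claim this equality. It builds smooth functions $p^\delta$ supported only near the radial lines (a mollification of the harmonic measure \emph{restricted} to the lines), proves that $(h^\D,p^\delta)\to(h^\D,\rho_{z_j}^{\partial V_j})$ in $L^2(\P)$ and in probability --- this is where the boundary condition enters, via the Markov decomposition of $h^\D$ in the sector: the harmonic part is reproduced exactly, for every $\delta$, by the mean value property, while the zero-boundary part tested against either the full or the line-restricted mollified measure has variance given by a Green's function integral (Corollary \ref{cor::k_green}) which tends to $0$ --- and then applies \emph{Fatou's lemma}, so that only the one-sided bound $\E\big[(h^\D,\rho_{z_j}^{\partial V_j})^4\big]\le\liminf_{\delta\to 0}\iiiint_{\D^4}\tilde K_4^\D(x_1,\ldots,x_4)\prod_{i=1}^4 p^\delta(x_i)\,dx_i$ is ever needed, the right-hand side being legitimate by Lemma \ref{lemma::4_point_kernel} since each $p^\delta$ is smooth. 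With this one-sided inequality in place of your claimed equality, the rest of your argument (harmonic measure mass of order $\eps/a_j$, integration of the logarithmic kernel bound) goes through and indeed yields the stated estimate with room to spare in the power of the logarithm.
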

We remark that the bound above is much improved compared to \cref{prop:K_a.s._finite} if $\ve \ll \min_ja_j$. This is where the effect of the Dirichlet boundary condition assumption is manifested. Also the choice of the Voronoi cell is not crucial, any partition of the domain separating the points would work. This particular choice of cells is simply to make the calculations explicit.
\begin{proof}
First suppose that $a_j>\eps$. By Lemma \ref{lemma::harm_av_monotone} and Cauchy--Schwarz, it is enough to consider the wedge $$W_a=\{z=r\e^{i\theta}: -a<\theta<a\,,\, 0<r<1\}$$ for every $\eps\leq a \leq \pi/2$ and prove that $\E[(h^\D,\rho_w^{\partial W_a})^4]\leq c\frac{\eps^4}{a^4}\log^4(a)$ when $w:=1-\eps$. To begin, we describe how to approximate $(h^\D,\rho_{w}^{\partial W_a})$ in a slightly different way. This is very similar to the approximation used in Section \ref{sec::harm_avg} (we take some smooth approximations to the harmonic measure on the boundary of a sequence of domains increasing to $W_a$ from the inside) but is more explicit, which will be an advantage here.

For $\delta\ll \eps$, let $$r_1^\delta:=\{r\e^{i(a-\delta)}: \delta \leq r\leq 1-\delta\}\;\; \text{and} \;\; r_2^\delta := \{r\e^{i(-a+\delta)}: \delta \leq r\leq 1-\delta\}$$ and let $W_a^\delta = \{ \delta < |z| <1 - \delta; \arg(z) \in (- a + \delta, a - \delta)\}$.
Let $\hat \nu^\delta$ be the harmonic measure seen from $w$ on the boundary of the domain $W_a^\delta$ and let $\nu^\delta$ be the same harmonic measure, but restricted to the lines $r_1^\delta$ and $ r_2^\delta$. Finally, let $\phi$ be a smooth radially symmetric function with mass $1$, supported on $\D$, and denote $\phi^z_\delta(\cdot)=\delta^{-2}\phi(|z-\cdot|/\delta)$ as usual. Set $p^\delta(z) = \int \phi^z_{\delta/10}(x) \nu^\delta (dx)$. We claim the following.
\medskip

\noindent\textbf{Claim:}
\begin{enumerate}[(a)]
\item $(h^\D,p^\delta)\to (h^\D, \rho_{w}^{\partial W_a})$ in $L^2(\P)$ and in probability as $\delta \to 0$.
\item $p^\delta(z)$ is bounded above by some universal constant times $\delta^{-1}\frac{\eps}{a}\frac{\pi}{2(a-\delta)}(\frac{|z|}{1-\delta})^{\frac{\pi}{2(a-\delta)}-1}$.
\end{enumerate}
\begin{proof}[Proof of claim]
 For (a), we first prove the same statement with $p^\delta$ replaced by $$\hat{p}^\delta(z):=\int_\D \phi^z_{\delta/10}(x)\hat{\nu}^\delta(dx).$$
 To do this, we apply the Markov property in $W_a$, writing $h^\D=h_\D^{W_a}+\varphi_{\D}^{W_a}$.
 First, we consider the part with zero boundary conditions: by \cref{cor::k_green}, we have that
 $$
  \E[(h_\D^{W_a}, \hat p^\delta)^2]  = \int_{W_a^2}G^{W_a}(z,w) \hat p^\delta (z)\hat p^\delta(w) dz dw \to 0
  $$ as $\delta \to 0$ by standard properties of the Dirichlet Green's function (note that $\hat p^\delta$ is simply a perturbation of the harmonic measure on $\partial W_a$).

  Then we consider the harmonic part: we have
   $$(\varphi_{\D}^{W_a},\hat{p}^\delta)=(\varphi_{\D}^{W_a},\phi_{\delta/10}^0 * \hat \nu^\delta)=(\varphi_{\D}^{W_a}*\phi_{\delta/10}^0,\hat \nu^\delta)=(\varphi_{\D}^{W_a},\hat \nu^\delta)=\varphi_{\D}^{W_a}(w)$$ for every $\delta$, since $\phi$ is radially symmetric with mass $1$, $\varphi_{\D}^{W_a}$ is harmonic, and $\hat{\nu}$ is the harmonic measure on $W_a^\delta\subset W_a$ meaning that $\varphi_\D^{W_a}$ is a true harmonic function on $W_a^\delta$. Combining these two facts, it follows  that
  $(h^\D, \hat{p}^\delta)$ converges to $(h^\D, \rho_{w}^{\partial W_a})$ in $L^2(\P)$ and in probability as $\delta \to 0$. Now to conclude (a), simply observe that $\var (h^\D,p^\delta-\hat{p}^\delta)$ converges to $0$ as $\delta \to 0$: again, this follows from
 Corollary \ref{cor::k_green} and elementary properties of the Green's function since $p^\delta  - \hat p^\delta$ is supported on an arbitrarily small neighbourhood of a fixed arc of the unit circle (and converges to the harmonic measure on that arc seen from $w$).


 We now move on to (b). For $z\in \D$ we have $$p^\delta(z)\leq \sup_x |\phi(x)|\times \delta^{-2} \,\nu^\delta(B_z(\delta)\cap\{r_1^\delta\cup r_2^\delta\})= \sup_x |\phi(x)| \times \delta^{-2} \,\hat{\nu}^\delta(B_z(\delta)\cap\{r_1^\delta\cup r_2^\delta\})$$ by definition.
 Consider the maps
 $$\varphi_1^\delta: z\mapsto \frac{z^{\pi/(2(a-\delta))}}{(1-\delta)^{\pi/(2(a-\delta))}}\;, \;\; \varphi_2^\delta: z\mapsto \frac{z^2+2z-1}{2z+1-z^2} \;,\;\; \varphi_3^\delta: z\mapsto \frac{z-(1-\eta)}{1-(1-\eta)z}$$ where  $(1-\eta)=\varphi_2^\delta \circ \varphi_1^\delta (w).$
 Then $\ph_1^\delta$ maps $\tilde{W}_a^\delta$ to the half disc $ \D \cap \{\Re (z) >0 \}$, $\tilde{W}_a^\delta = \{ |z| < 1- \delta: \arg (z) \in ( - a + \delta, a - \delta)\}$. It can also be checked using elementary properties of M\"obius maps that $\ph_2^\delta$ maps the half disc to the full disc $\D$, and $\ph_3^\delta$ maps $\D$ to itself so that $\ph_2^\delta \circ \ph_1^\delta(w)$ is sent to $0$. Hence $\varphi_3^\delta \circ \varphi_2^\delta \circ \varphi_1^\delta$ is a conformal map from $\tilde{W}_a^\delta$ to $\D$ sending $w$ to $0$,

A computation verifies that for any $z\in \D$, $\varphi_3^\delta \circ \varphi_2^\delta \circ \varphi_1^\delta(B_z(\delta)\cap\{r_1^\delta\cup r_2^\delta\} )$ is an arc of the unit circle with length less than $\frac{c\eps \delta \pi}{2a(a-\delta)}(\frac{|z|}{1-\delta})^{\frac{\pi}{2(a-\delta)}-1}$ for some universal constant $c$. In particular, we use that \begin{equation*}
 |(\varphi_1^\delta)'|\leq \frac{\pi}{2(a-\delta)}\left(\frac{|z|}{1-\delta}\right)^{\frac{\pi}{2(a-\delta)}-1}\;\;\text{on} \;\; \{r_1^\delta \cup r_2^\delta \}; \;\;\;  |(\varphi_2^\delta)'|\leq 4 \;\; \text{on} \;\; \{iy:y\in [-1,1]\} \;\; \end{equation*}
 and
\begin{equation*}|(\varphi_3^\delta)'|\leq 2\eta \;\; \text{on} \;\; \{\e^{i\theta}: \pi/2 \leq \theta \leq 3\pi/2\} \end{equation*}
where $\eta\leq \frac{1}{a}c\eps$ for some such $c$. By definition of $\hat{\nu}^\delta$, and the fact that the harmonic measure with respect to $W_a^\delta$ is less than the harmonic measure with respect to $\tilde{W}_a^\delta$ for any fixed subset of $\{r_1^\delta \cup r_2^\delta \}$, this finishes the proof of the claim.
\end{proof}

With this claim in hand, we have by Fatou's Lemma and Lemma \ref{lemma::4_point_kernel}
$$ \mathbb{E}[(h^\D,\rho_{w}^{\partial W_a})^4] \leq \liminf_{\delta\to 0} \iint_{\D^4} \tilde{K}_4^\D(x_1,\cdots,x_4) p^{\delta}(x_1) \cdots p^{\delta}(x_4) \, \prod dx_i $$
and then by Proposition \ref{prop:K_a.s._finite} and Remark \ref{remark::K_4_neater}, we see that this is less than or equal to
$$c \frac{\eps^4}{a^4} \liminf_{\delta\to 0} \delta^{-4} \iint_{\{\text{supp}(p^\delta)\}^4 } \big( 1+ \sum_{i\ne j} \log^2(|x_i-x_j|)\big) \prod_{i=1}^4 \frac{\pi}{2(a-\delta)}\left(\frac{|x_i|}{1-\delta}\right)^{\frac{\pi}{2(a-\delta)}-1} dx_i
$$
another universal $c$ (which may now change from line to line).

We can simplify this expression.
Because $p^\delta$ is supported in a strip of width $\delta/10$ around the lines $r_1^\delta$ and $r_2^\delta$, we can change of variables by considering the orthogonal projection onto $r_1^\delta \cup r_2^\delta$, so that we can write
$$(x_i)_{1\le i \le 4} = (z_i + y_i)_{1\le i \le 4} \text{ with }z_i \in r_1^\delta \cup r_2^\delta; \  0 \le |z_i| \le 1-\delta \text{ and }-\delta/10 \le |y_i| \le \delta /10.$$
Note then that $ \log^2 | x_i - x_j| \le \log^2 ( \big||z_i| - |z_j| \big|) $. Performing the change of variables $(u_i)_{1\leq i\leq 4} = (\frac{|z_i|}{1-\delta})_{1\leq i \leq 4}$) we obtain that the above is less than or equal to
$$c\frac{\eps^4}{a^4} \left(1+\liminf_{\delta\to 0} \sum_{1\leq i\ne j\leq 4} \int_{[0,1]^4} \log^2|u_i-u_j| \prod_{i=1}^4 \frac{\pi}{2(a-\delta)} u_i^{(\frac{\pi}{2(a-\delta)}-1)} du_i\right).$$
Thus, to conclude the proof in the case $a_j\ge \eps$, we need to show that
$$ \int_{[0,1]^2} \log^2|x-y| bx^{b-1}by^{b-1} \, dx \, dy \leq C\log^4 b $$
for some constant $C$ and all $b\geq 1$. To see this, we break up the integral into 4 regions. The first is $S_1:=\{x\leq 1-\log(b)/b\}\cap \{y\leq 1-\log(b)/b\}$, and on
this region $bx^{b-1}$ and $by^{b-1}$ are uniformly bounded in $b$ (indeed, one can easily check that $b(1-\log b/b)^{b-1}\to 1$ as $b \to \infty$). Since $\iint_{[0,1]^2} \log^2|x-y| dx dy$ is finite, this means that the integral over $S_1$ is less than or equal to a universal constant. The second is $S_2:=\{x \leq 1-\log(b)/b\}\cap\{y> 1-\log(b)/b\}$, and on this region, $bx^{b-1}y^{b-1}$ is uniformly bounded in $b$ for the same reason. Thus integrating over $S_2$, and using that $\int_0^a \log^2(u) \, du = O(a\log^2(a))$ as $a\to 0$, we obtain something of order at most $\log^3(b)$. Symmetrically, the integral  over the region $S_3:=\{y \leq 1-\log(b)/b\}\cap\{x> 1-\log(b)/b\}$ is at most order $\log^3(b)$. The last region is $S_4:=\{x\geq 1-\log(b)/b\}\cap \{y\geq 1-\log(b)/b\}$. Using that $\iint_{[0,a]^2} \log^2(|x-y|) dx dy=O(a^2\log^2(a))$ as $a\to 0$, we see that the integral over $S_4$ is $O(\log^4(b))$, and this completes this part of the proof.

Finally, suppose that $a_j<\eps$. Then we have $B_{z_j}(a_j/10)\subset V_j$, so by Lemma \ref{lemma::harm_av_monotone} $$\mathbb{E}[(h^\D,\rho_{z_j}^{\partial V_j})^4]\leq \mathbb{E}[(h^\D,\rho_{z_j}^{\partial B_{z_j}(a_j/10)})^4].$$
Using Proposition \ref{prop:K_a.s._finite}, we see that this is less than $c\log^2(a_j)$ for some universal $c$.
\end{proof}

\subsection{Proof of Gaussianity}

The proof of Proposition \ref{prop:circle_avg_pointwise_Gaussian} is based on the following lemma. Let $D'\Subset D$ be an analytic Jordan domain \footnote{by analytic Jordan domain we mean a simply connected domain bounded by a Jordan curve, where the curve is the image of the unit circle under a conformal map defined on an open neighbourhood of the unit circle.} containing $k$ pairwise distinct points $z_1,\cdots, z_k$.

\begin{prop}
\label{lemma::analytic_harm_avg_gaussian}
$((h^D,\rho_{z_1}^{\partial D'}), \cdots, (h^D,\rho_{z_k}^{\partial D'}))$
is a Gaussian vector.
\end{prop}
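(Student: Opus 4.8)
The plan is to reduce the statement to a one-dimensional one via the Cram\'er--Wold device, and then realise each scalar linear combination as the terminal value of a \emph{continuous} martingale with \emph{independent} increments, whose Gaussianity is supplied by L\'evy's characterisation (equivalently the Dubins--Schwarz theorem). Fix $\lambda_1,\dots,\lambda_k\in\R$; it suffices to show that $X:=\sum_{i=1}^k\lambda_i(h^D,\rho_{z_i}^{\partial D'})$ is a centered Gaussian random variable.

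To build the martingale I would choose a continuous increasing family of analytic Jordan domains $(D_t)_{t\ge 0}$, each $D_t\Subset D$ containing all the $z_i$, with $D_0=D'$ and $D_t\uparrow D$ as $t\to\infty$. Using the domain Markov decomposition $h^D=h_D^{D_t}+\varphi_D^{D_t}$ and its uniqueness (Lemma \ref{lem:unicity_decomposition}), set
$$M_t:=\sum_{i=1}^k\lambda_i\,(h_D^{D_t},\rho_{z_i}^{\partial D'}),\qquad \cG_t:=\sigma(h_D^{D_t}).$$
Nesting the decompositions shows that $\cG_t$ is a filtration (the inside-$D_s$ part of $h_D^{D_t}$ coincides with $h_D^{D_s}$) and that $(h_D^{D_t},\rho_{z_i}^{\partial D'})=\varphi_D^{D'}(z_i)-\varphi_D^{D_t}(z_i)$, so that for $s<t$ the increment is $M_t-M_s=\sum_i\lambda_i\,\varphi_{D_t}^{D_s}(z_i)$, a functional of the harmonic part of $h_D^{D_t}$ in $D_s$. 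Since that harmonic part is independent of the field inside $D_s$, the increments of $M$ over disjoint intervals are independent and centered, so $M$ is a $\cG_t$-martingale. At $t=0$ the harmonic average of the zero-boundary field $h_D^{D'}$ on its own boundary $\partial D'$ vanishes, giving $M_0=0$; as $t\to\infty$, $\varphi_D^{D_t}(z_i)=(h^D,\rho_{z_i}^{\partial D_t})\to 0$ in $L^2$ by the Dirichlet boundary condition (as in Lemma \ref{lemma::circle_average_defn}, together with Corollary \ref{cor::k_green}), whence $M_t\to X$ in $L^2$. Because the increments are independent, $M$ has deterministic quadratic variation $\langle M\rangle_t=\var(M_t)$, finite by the second moment hypothesis, which I would reparametrise to a continuous strictly increasing clock using $\tilde K_2^D=aG^D$.

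The crux, and the main obstacle, is to prove that $t\mapsto M_t$ admits a \emph{continuous} modification; this is the only place where the fourth moment assumption enters, its role being to rule out Poissonian-type jumps in the increments. I would establish a Kolmogorov-type bound $\E[(M_t-M_s)^4]\le C\,|v(t)-v(s)|^{1+\beta}$, expanding the fourth moment of the increment $\sum_i\lambda_i\varphi_{D_t}^{D_s}(z_i)$ into four-point functions of harmonic averages and controlling them via the sharp estimate of Proposition \ref{lemma::voronoi_estimates}. The point is to select the family $(D_t)$ --- equivalently, after transporting everything to $\D$ by conformal invariance (Proposition \ref{lemma:conformal_invariance}), a well-chosen family of conformal maps --- so that the increment over $[s,t]$ becomes a harmonic average of $h^\D$ seen from points confined to a thin region of width of order $|t-s|$ near $\partial\D$. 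Then the factor $\eps^4/a_j^4$ in Proposition \ref{lemma::voronoi_estimates}, which is precisely the quantitative trace of the Dirichlet boundary condition, beats the logarithmic divergence of $\tilde K_4^\D$ from Proposition \ref{prop:K_a.s._finite} and produces the extra power of $|t-s|$ required by Kolmogorov's criterion.

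With continuity in hand the conclusion is routine: $M$ is a continuous martingale with independent increments, hence by Dubins--Schwarz it is a \emph{deterministic} time change $M_t=B_{\langle M\rangle_t}$ of a Brownian motion $B$, so $M_t$ is Gaussian for every $t$ and therefore so is the $L^2$-limit $X=\lim_{t\to\infty}M_t$. Since $\lambda\in\R^k$ was arbitrary, the Cram\'er--Wold theorem yields that $((h^D,\rho_{z_1}^{\partial D'}),\dots,(h^D,\rho_{z_k}^{\partial D'}))$ is a Gaussian vector, as claimed.
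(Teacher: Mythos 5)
Your proposal is correct and follows essentially the same route as the paper's proof: Cram\'er--Wold, a martingale with independent increments built from nested domain Markov decompositions along an interpolating family of analytic domains, Kolmogorov continuity via fourth moments and Proposition \ref{lemma::voronoi_estimates}, and Dubins--Schwarz; the only cosmetic difference is that you grow the domains outward from $D'$, so that your $M_t=\sum_i\lambda_i\varphi_{D_t}^{D'}(z_i)$ is exactly $X$ minus the paper's martingale (which runs inward from $\partial \D$ to $\partial D'$), the two processes having the same increments $\sum_i\lambda_i\varphi_{E}^{E'}(z_i)$, $E'\subset E$, traversed in opposite order. The one step you defer --- choosing $(D_t)$ so that each increment becomes a harmonic average confined to an annulus of width $O(|t-s|)$ near $\partial\D$, uniformly in $s$, $t$ and the index $i$ --- is precisely the content of the paper's Lemma \ref{lemma::approximating_analytic_domains} (uniform bounds on the derivatives of the uniformising maps, proved there via Poisson kernel and Brownian motion estimates), which is the main technical work of that section rather than a mere choice.
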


By conformal invariance, we can assume for the proof that $D=\D$. To prove this we will need the following technical lemma.

\begin{lemma}
\label{lemma::approximating_analytic_domains}
Let $D'\Subset \D$ be an analytic Jordan domain containing $k$ pairwise distinct points $z_1,\cdots, z_k$.
Then there exists a sequence of \emph{increasing} domains $(D_s)_{s\in [0,1]}$ with $D_0=D'$ and $D_1=\D$, such that
\begin{itemize}
\item $D_s$ is an analytic Jordan domain for every $s\in [0,1]$.
\item $d_H(\overline{D}_s,\overline{D}_t)\leq c|s-t|$ for all $s,t\in [0,1]$ where $d_H$ is the Hausdorff distance and $c$ does not depend on $s,t\in [0,1]$.
\item If $\phi_{j,s}: \overline{D}_s\to \overline{\D}$ for each $1\leq j \leq k$ and $s\in [0,1]$ is the unique conformal map sending $z_j\mapsto 0$ and with $\phi_{j,s}'(z_j)>0$ then
\begin{equation*}
\sup_{s\in [0,1]}\sup_{1\leq j\leq k} \sup_{z\in D_s}|\phi_{j,s}'(z)| <\infty
\end{equation*}
\end{itemize}
\end{lemma}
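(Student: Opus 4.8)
The plan is to construct the family $(D_s)_{s\in[0,1]}$ by uniformising the topological annulus $\D\setminus\overline{D'}$ to a round annulus and pulling back concentric circles; the first two properties are then routine, and the uniform control of the conformal maps $\phi_{j,s}$ is the real difficulty. Concretely, since $D'\Subset\D$ is an analytic Jordan domain, $\D\setminus\overline{D'}$ is a non-degenerate annulus bounded by two analytic Jordan curves. Uniformising it gives a conformal map $F:\D\setminus\overline{D'}\to A_R:=\{1<|w|<R\}$, for $R>1$ its modulus, with $\partial D'\mapsto\{|w|=1\}$ and $\partial\D\mapsto\{|w|=R\}$. Because both boundary curves are analytic, $F$ extends to a conformal map with non-vanishing derivative on an open neighbourhood of the \emph{compact} closed annulus $\overline{A_R}$; in particular $F^{-1}$ is Lipschitz on $\overline{A_R}$. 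Setting $r(s):=1+s(R-1)$, I let $\Gamma_s:=F^{-1}(\{|w|=r(s)\})$ and let $D_s$ be the Jordan domain it bounds. Each $\Gamma_s$ is an analytic Jordan curve (the injective image of a circle under the analytic $F^{-1}$), the $D_s$ increase with $D_0=D'$ and $D_1=\D$, and each contains $D'$, hence all the $z_j$.

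For the Hausdorff bound, take $s\le t$, so that nesting gives $d_H(\overline{D_s},\overline{D_t})=\sup_{y\in\overline{D_t}}\dist(y,\overline{D_s})$. Writing $y=F^{-1}(w)$ with $r(s)\le|w|\le r(t)$ and radially projecting to $w':=r(s)w/|w|$, so that $F^{-1}(w')\in\Gamma_s\subset\overline{D_s}$, the Lipschitz bound yields $\dist(y,\overline{D_s})\le\mathrm{Lip}(F^{-1})(|w|-r(s))\le\mathrm{Lip}(F^{-1})(R-1)|t-s|$. This is the second bullet with $c=\mathrm{Lip}(F^{-1})(R-1)$.

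The uniform derivative bound is the crux. I reduce it to producing constants $\rho>0$ and $M<\infty$, independent of $s$ and $j$, such that each $\phi_{j,s}$ extends conformally to the $\rho$-neighbourhood $D_s^\rho:=\{z:\dist(z,\overline{D_s})<\rho\}$ with $|\phi_{j,s}|\le M$ there. Granting this, every $z\in\overline{D_s}$ satisfies $B_z(\rho)\subset D_s^\rho$, so Cauchy's estimate gives $|\phi_{j,s}'(z)|\le M/\rho$ uniformly. To produce the extension I use Schwarz reflection across the analytic curve $\Gamma_s$: reflection in $\Gamma_s$ is $R_s:=F^{-1}\circ\sigma_s\circ F$, where $\sigma_s(w)=r(s)^2/\bar w$ is inversion in $\{|w|=r(s)\}$, an anticonformal involution fixing $\Gamma_s$, and the reflection principle extends $\phi_{j,s}$ past $\Gamma_s$ via $\phi_{j,s}(R_s(z))=1/\overline{\phi_{j,s}(z)}$. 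The key point is uniformity: since $F$ and $F^{-1}$ are conformal with non-vanishing derivative on a neighbourhood of the compact set $\overline{A_R}$, the maps $\sigma_s$, hence the $R_s$, are defined and uniformly bi-Lipschitz on a collar of $\Gamma_s$ of width $\rho>0$ independent of $s\in[0,1]$, which yields the conformal extension to $D_s^\rho$. The bound $|\phi_{j,s}|\le M$ follows because $|\phi_{j,s}|\le1$ on $D_s$, while on the outer collar $|\phi_{j,s}(R_s(z))|=1/|\phi_{j,s}(z)|$ with $|\phi_{j,s}|$ bounded below on the corresponding inner collar (shrinking $\rho$ if needed, again uniformly by compactness). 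Independence of $j$ is automatic, since $\rho$ and $R_s$ do not involve $j$, and $z_j\in D'\subset D_s$ keeps $\dist(z_j,\partial D_s)\ge\dist(z_j,\partial D')>0$, so no degeneration occurs at the normalisation points.

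The step I expect to be delicate is precisely this last uniformity: a single analytic uniformising map $F$ of the compact closed annulus makes the whole one-parameter family of boundary curves $\Gamma_s$ \emph{uniformly analytic}, so that reflection furnishes conformal extensions of a uniform size on which $\phi_{j,s}$ stays uniformly bounded. Everything else is bookkeeping against the compactness of $[0,1]$ and of $\overline{A_R}$.
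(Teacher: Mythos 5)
Your construction of the family $(D_s)$ is the same as the paper's: uniformise the doubly connected domain $\D\setminus\overline{D'}$ onto a round annulus and pull back concentric circles, and your treatment of the first two bullets (analyticity of each $\Gamma_s$, and the Hausdorff bound via the Lipschitz constant of the boundary-extended uniformising map) also matches the paper. Where you genuinely diverge is the third bullet, which is indeed the crux. The paper never extends $\phi_{j,s}$ beyond $\overline{D_s}$: it identifies $|\phi_{j,s}'|$ on $\partial D_s$ with the boundary Poisson kernel, i.e.\ with $\partial_n G^{D_s}(z_j,\cdot)$ up to a factor $\sqrt{2}$, bounds that normal derivative uniformly in $s$ and $j$ by a Brownian-motion argument (decomposing the path from $z_j$ into a geometrically bounded number of excursions between two fixed curves $\partial U$, $\partial V$ inside $D'$, together with an explicit computation for the Green's function of the disc of radius $r+(1-r)s$), and then passes from $\partial D_s$ to $\overline{D_s}$ by the maximum principle applied to the analytic function $\phi_{j,s}'$. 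You instead extend $\phi_{j,s}$ itself across $\Gamma_s$ by Schwarz reflection on a collar of uniform width --- the uniformity supplied by the single map $F$ being conformal on a neighbourhood of the compact closed annulus --- and conclude with Cauchy's estimate. This is a purely complex-analytic route that bypasses the probabilistic machinery entirely and is arguably shorter; the price is that all the delicacy is concentrated in two uniformity claims about the reflection, whereas the paper's excursion decomposition, while longer, only ever uses objects ($U$, $V$, their Poisson kernel bound $M$) that are fixed once and for all, independent of $s$.

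Of your two uniformity claims, the collar width is fine as you argue it, but the uniform lower bound on $|\phi_{j,s}|$ on the inner collar deserves a real proof rather than an appeal to ``compactness'': both the maps $\phi_{j,s}$ and the collars move with $s$, so compactness of $[0,1]$ gives nothing by itself (one would need, e.g., Carath\'eodory kernel continuity of $s\mapsto\phi_{j,s}$ to exploit it). The clean fix is the Schwarz lemma: since $D_s\subset\D$, the map $\phi_{j,\D}\circ\phi_{j,s}^{-1}:\D\to\D$ fixes the origin, whence
\[
|\phi_{j,s}(z)|\;\ge\;|\phi_{j,\D}(z)|\;=\;\left|\frac{z-z_j}{1-\bar z_j z}\right|\;\ge\;\tfrac12\,|z-z_j|,\qquad z\in D_s .
\]
Since $D'\subset D_s$ forces $d(z_j,\Gamma_s)\ge d(z_j,\partial D')>0$, every point of the inner collar $R_s(\text{outer collar})$ lies at distance at least $d(z_j,\partial D')-O(\rho)$ from $z_j$, so for $\rho$ small this gives a lower bound on $|\phi_{j,s}|$ there that is uniform in $j$ and $s$ (and the same distance bound shows $R_s(z)\ne z_j$, so your reflection formula never divides by zero). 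With that inserted, your argument is complete and correct.
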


\begin{proof}
This fact seems intuitive and may well be known but we could not find a reference. The proof we give here is elementary and relies on Brownian motion estimates as well as explicit constructions of Riemann maps.

Consider the doubly connected domain $\D\setminus \overline{D'}$. Then, by the Riemann mapping theorem for doubly connected domains (\cite[Ch6, \S 5, Theorem 10]{ahlfors}), there exists a conformal map $\phi$ from $\D\setminus \overline{D'}$ to the annulus $\D\setminus r\overline{\D}$ for some unique $r<1$. We set $$D_s:=\{\phi^{-1}((r+(1-r)s)\D\setminus r\overline{\D})\}\cup D'$$ for each $s\in [0,1]$ so that $D_0=D'$, $D_1=\D$ and the $(D_s)_{s}$ are increasing as required. It is also clear that $D_s$ is an analytic Jordan domain for each $s$.

Moreover, as $D'=D_0$ has analytic boundary we know that $\phi^{-1}$ can be extended analytically, by Schwarz reflection, to $\D\setminus u\overline{\D}$ for some $u<r$ (and we can pick $u$ such that $z_i\notin \phi^{-1}(\D\setminus u\overline{\D})$ for each $1\le i\le k$). We also have that $|\phi'|$ is a continuous function on the compact set $\overline{\D}\setminus D' $ (because $\phi$ extends analytically to $\bar{\D}\setminus {D'}$) so is bounded above and below on this set. This provides the second statement of the lemma (concerning Hausdorff distance).

For the third statement, we pick $u<v<r$ and define $V$ to be the domain given by the interior of the Jordan curve $\phi^{-1}(\partial (v\D))$. Similarly, we define $U$ to be the domain bounded by the curve $\phi^{-1}(\partial (u\D))$, so that $U\Subset V\Subset D'$. Then we set $$M=\sup_{x\in \partial U} \sup_{y\in \partial V}  K^{V}(x,y)$$
where $K^{V}(x,\cdot)$ is the boundary Poisson kernel on $\partial V$. That is, $K^{V}(x,y)$ is the density, with respect to arc length, of the harmonic measure on $\partial V$ viewed from $x\in V$. We recall here that for an analytic Jordan domain $D$, and $x\in D$, $y\in \partial D$
\begin{equation} \label{eqn::poisson_kernel_facts}
K^D(x,y)= \partial_n G^{D}(x,y) = \sqrt{2}|\varphi_x'(y)|\end{equation} for $\varphi_x:\overline{D}\to \overline{\D}$ the unique conformal map with $\varphi_x(x)=0$ and $\varphi_x'(x)>0$. \footnote{this follows from the fact that for analytic $D$, $\varphi_{x}(z)=\e^{-G^D(x,z)-i\tilde{G}^D(x,z)}$ where $\tilde{G}^D(x,\cdot)$ is the harmonic conjugate of $G^D(x,\cdot)$, see for example \cite{krantz}.} In particular, since $\partial V$ is an analytic Jordan curve, $|\varphi_{x}'(y)|$ is a continuous function on $\partial U \times \partial V$, and this means that $M$ defined above is finite.

 We will use the fact that for any $s\in [0,1]$, by definition of $\phi$ and conformal invariance, the image under $\phi$ of a Brownian motion started at $y\in \partial V$ and stopped when it leaves $D_s\setminus \overline{U}$ is a Brownian motion started at $\phi(y)\in \partial (v\D)$ and stopped when it leaves $(r+(1-r)s)\D\setminus u\overline{\D}$. We refer to this elementary fact as ($\dagger$).

First, we will use ($\dagger$) to prove that for any $z\in \partial D_s$, if $\mathbf{n}(z)$ is the inward unit normal vector to $\partial D_s$ at $z$, then
\begin{equation} \label{eqn::G_bound} \partial_n G^{D_s}(z_i,z)=\lim_{\delta\to 0} \delta^{-1} G^{D_s}(z_i,z+\delta \mathbf{n}(z)) \leq c \end{equation} where the constant $c$ is independent of $1\leq i \leq k$, $s\in [0,1]$ and $z\in \partial D_s$. To do this, without loss of generality we take $i=1$. Assume that $\delta$ is always small enough that $z+\delta\mathbf{n}(z)$ does not intersect $\overline{V}$. Then we take a Brownian motion $(B_t)_{t\geq 0}$ in $\C$ started from $z_1$, and define the following series of stopping times:
$$T_1=\inf \{t\geq 0: B_t\notin V\}; \;\; S_1=\tau_{D_s}\wedge\inf \{t\geq T_1: B_t\in \overline{U} \}, $$
$$ T_j =\tau_{D_s}\wedge\inf \{t\geq S_{j-1}:B_t \notin V\} \;\; S_j = \tau_{D_s}\wedge\inf \{t\geq T_j: B_t\in \overline{U}\}\;\; \text{for} \, j\geq 2$$
where $\tau_{D_s}$ is the hitting time of $\partial D_s$.
Then for each time interval $[T_j,S_j]$, writing $p_t$ for the transition density of Brownian motion in $\C$, we have
\begin{eqnarray*}
\E_{z_1}\left[\int_{t=T_j}^{S_j} p_t(z_1,z+\delta \mathbf{n}(z)) \, dt\right] & = & \E_{z_1} \left[ \int_{t=0}^{S_j-T_j} p_t(B_{T_j},z+\delta \mathbf{n}(z)) \, dt \right] \\
&\leq & M |\partial V| \sup_{x\in \partial (v\D)} G^{(r+(1-r)s)\D}(x,\phi(z+\delta \mathbf{n}(z)))
\end{eqnarray*} where $|\partial V|$ is the length of the curve $\partial V$. The inequality follows from ($\dagger$) since the expected time that a Brownian motion started at $x\in \partial(v\D)$ spends at any given point before exiting $(r+(1-r)s)\D\setminus \overline{U}$ is less than the expected time spent there before exiting $(r+(1-r)s)\D$. This gives us that
$$\limsup_{\delta\to 0} \delta^{-1}G^{D_s}(z_i,z+\delta \mathbf{n}(z))\leq C \limsup_{\delta\to 0} \left( \delta^{-1} \sup_{x\in \partial (v\D)} G^{(r+(1-r)s)\D}(x,\phi(z+\delta \mathbf{n}(z)))\right) \E_{z_1}[\,|\{j: S_j<\infty\}|\,].$$ Now, since $|\{j: S_j<\infty\}|$ is dominated by a geometric random variable with success probability uniformly bounded below (for example, the probability that a Brownian motion started on $\partial (v\D)$ hits $\partial \D$ before $\partial(u\D )$) we see that the expectation is bounded, independently of $z$ and $s\in [0,1]$. Thus we only need to consider the limsup term in the above. For this, we first note that $|\phi(z+\delta \mathbf{n}(z))|\geq (r+(1-r)s)(1-K\delta)$ for some $K$ depending only on $\phi$ (since $\phi$ has uniformly bounded derivative). Then, an explicit calculation using the Green's function in the unit disc tells us that
$$\sup_{x\in \partial (v\D)} G^{(r+(1-r)s)\D}(x,\phi(z+\delta \mathbf{n}(z))) \leq \log \left(1+ K\delta\left(\frac{a^2-2+K\delta-K\delta a^2}{(a(1-K\delta)-1)^2}\right)\right) $$
where $a:=v/(r+(1-r)s)$. Since $|a|\leq v/r <1$, we obtain (\ref{eqn::G_bound}).

Now recall the definition of $\phi_{j,s}$ from the statement of the lemma. We have just proved, by the second equality in (\ref{eqn::poisson_kernel_facts}), that $$
\sup_{z\in \partial D_s}|\phi_{j,s}'(z)|\leq c$$ for some $c$ not depending on $j$ or $s$. However, since $\phi'_{j,s}$ is analytic up to the boundary of $D_s$ we obtain the same upper bound for $\sup_{z\in \overline D_s}|\phi_{j,s}'(z)|.$
\end{proof}

\begin{proof}[Proof of Proposition \ref{lemma::analytic_harm_avg_gaussian}] To prove Proposition \ref{lemma::analytic_harm_avg_gaussian}, we take a sequence of increasing domains $(D_s)_{s\in[0,1]}$ as described by Lemma \ref{lemma::approximating_analytic_domains}. Then we define $$X^{\ii}_s:= (h^\D,\rho^{\partial D_{1-s}}_{z_i})$$ for all $i,s$ and let $$\b X_s := (X^{(1)}_{s},\ldots, X^{(k)}_{s})$$ (note the reversal of time here - we want to now move inwards from $\partial \D$ to $\partial D_s$).
We will prove that for every $s$, $\b X_s$ is distributed as a multivariate Gaussian random vector. Setting $s=1$, this proves the lemma.

In fact, we will prove the following equivalent statement: for every vector $(a_1,\ldots, a_k) \in \R^k$, and $s>0$
\begin{equation*}
Y_s:= \sum_{i=1}^k a_i X^{(i)}_{s}
\end{equation*}
is a Gaussian random variable. Note that $Y_0=0$ because $h^\D$ has zero boundary conditions, and it is also straightforward to check using the domain Markov property that $Y_s$ has independent mean zero increments. By the Dubins--Schwarz theorem, these observations tell us that as long as $Y_s$ has a continuous modification, it must be a Gaussian process (because it is a continuous martingale with deterministic quadratic variation process).

To prove that $Y_s$ has continuous modification, we shall prove that for any $\eta>0$ there exists some constant $C$ such that for all $\ve>0$ and $s\in [0,1]$
\begin{equation}
\E[(Y_{s} - Y_{s+\ve})^4] \le C\ve^{2-\eta}\label{eq:fourth_moment}
\end{equation}
Using Kolmogorov's continuity criterion, \eqref{eq:fourth_moment} is enough to conclude that $Y_s$
admits a continuous modification.

Fix some $0<\eta<1$, let $s\in [0,1)$ and let $\gamma_\eps$ be the curve defined by $\partial D_{1-s-\eps}$ inside $D_{1-s}$. Then by definition, expansion and Cauchy-Schwarz,
$$\E[(Y_s-Y_{s+\eps})^4]=\E[\big(\sum_{i=1}^k a_i(h^{D_{1-s}},\rho_{z_i}^{\gamma_\eps})\big)^4]\leq  \sum_{1 \le i_1 \le \ldots \le i_4 \le k} a_{i_1} a_{i_2}a_{i_3} a_{i_4} \prod_{j=1}^4 \Big( \E[(h^{D_{1-s}},\rho^{\gamma_{\eps}}_{z_{i_j}})^4 ] \Big)^{1/4}.$$

 In light of the above inequality, it is enough to show that there exists a $C$ such that for all $1\le j\le k$, $s\in[0,1]$ and $\eps>0$
\begin{equation}
\E\big[ (h^{D_{1-s}},\rho_{z_j}^{\gamma_\eps})^4\big]  \le  C\ve^{2-\eta}.\label{eq:four_integral_bound}
\end{equation}

For this, we use our hypotheses on the family of domains $(D_s)_{0\leq s \leq 1}$. These tell us that if $\phi_{j,s}:D_s \to \D$ is the unique conformal map sending $z_j\mapsto 0$ and with $\phi'_{j,s}(z_j)>0$, we have that $\phi_{j,s}(\gamma_{\eps})$ is contained in $\{z: 1-b\eps<|z|<1\}$ for some $b>0$ not depending on $j,s$ or $\eps$. Then by conformal invariance, we can write
$$ \E[(h^{D_{1-s}},\rho_{z_j}^{\gamma_{\eps}})^4]=\E[(h^\D,\rho_0^{\phi_{j,s}(\gamma_{\eps})})^4]\leq \E[(h^\D,\rho_0^{\partial (1-b\eps)\D})^4],$$
where the inequality follows from Lemma \ref{lemma::harm_av_monotone}.

So we estimate the final quantity; without loss of generality, we assume that $b=1$. By Fatou's Lemma we have
\begin{equation}
\label{eqn::gaussianity_integral}
\E[(h^\D,\rho_0^{\partial (1-\eps)\D})^4]\leq \liminf_{\delta\to 0} \iint_{\D^4} \tilde K_4^\D(x_1,x_2,x_3,x_4) \hat\psi^{\delta}(x_1)\hat\psi^\delta(x_2)\hat\psi^\delta(x_3)\hat\psi^\delta(x_4)\, \prod dx_i,\end{equation}
recalling the definition of $\hat\psi^\delta$ from Section \ref{sec::harm_avg}: it is a smooth function, bounded above by some constant multiple of $\delta^{-1}$, that is supported on the annulus $\Omega_{\delta,\eps}:=\{z: (1-\eps)(1-11\delta/10) \leq |z| \leq (1-\eps)(1-4\delta/10) \}$. Suppose that $\delta<\eps$. Then the integrand is only supported on points $(x_1,x_2,x_3,x_4)$ all lying in $\Omega_{\delta,\eps}$. Moreover, if $(x_1,\cdots, x_4)$ are 4 such points, then by Proposition \ref{lemma::voronoi_estimates} $$\tilde{K}_4^\D(x_1,\cdots, x_4) \leq \big(\prod_{j=1}^4 \mathbb{E}[(h^\D,\rho_{z_j}^{\partial V_j})^4]\big)^{1/4}\leq c\big( \prod_j (\frac{\eps^4}{a_j^4}\wedge 1)\log^4(a_j)\big)^{1/4}$$ for some universal constant $c$, where $a_j=\min_{i\ne j}\{|u(x_i)-u(x_j)|_{1}\}$ (and $u(x)=x/|x|$.) Using the bound on $\hat \psi^\delta$, we see that (\ref{eqn::gaussianity_integral}) is bounded above by
\begin{equation}\label{eqn::gaussianity_int_2} c'\liminf_{\delta\to 0} \delta^{-4}\iint_{\Omega_{\delta,\eps}^4} \left(\prod_{j=1}^4 (\frac{\eps^4}{a_j^4}\wedge 1)\log^4(a_j)\right)^{1/4} \, dx_j\end{equation} for another universal $c'$.

Now, we rewrite the integral in polar coordinates $x_j=r_j\e^{i\theta_j}$ (so $u_j=\e^{i\theta_j}$) and then, noticing that $a_j$ depends only on the angular coordinate, integrate over $r_1,\cdots, r_4$. This gives us that (\ref{eqn::gaussianity_int_2}) is less than or equal to
$$c'' \iint_{0\leq \theta_1\leq \theta_2 \leq \theta_3 \leq \theta_4 \leq 2\pi} \left(\prod_{j=1}^4 (\frac{\eps^4}{a_j^4}\wedge 1)\log^4(a_j)\right)^{1/4} \, d\theta_j$$
where $a_j=a_j(\theta_1,\cdots, \theta_4).$ Now, we divide the integral over the $\theta_j$'s into several parts, depending on which $a_j$'s are smaller or bigger than $\eps$. Let
\begin{align}
(A_j)_{1 \leq j\leq 4} & := \{ a_{(j+k)\text{mod} 4} <\eps \text{ for } k=0,1; \;a_{(j+k)\text{mod} 4} \ge\eps \text{ for } k=2,3\} \nonumber\\
(B_j)_{1\leq j \leq 4} & := \{ a_{(j+k)\text{mod} 4} <\eps \text{ for } k=0,1,2; \; a_{(j+3)\text{mod} 4} \ge\eps \}  \nonumber\\
D& := \{ a_j <\eps \text{ for } j=1,2,3,4\},\nonumber \;\; \text{and} \\
E& := \{ a_j \ge\eps \text{ for } j=1,2,3,4\}.\nonumber
\end{align}
A computation yields that the integral over $A_j$ is $O(\eps^{2-\eta})$ for all $j$, the integral over $B_j$ is $O( \eps^{3-\eta})$ for all $j$, the integral over $D$ is $O(\eps^{2-\eta})$, and the integral over $E$ is $O(\eps^{2-\eta})$. This completes the proof of equation \ref{eq:four_integral_bound} and hence the lemma.
\end{proof}

\begin{proof}[Proof of \cref{prop:circle_avg_pointwise_Gaussian}]
The strategy of the proof is to construct a sequence of analytic Jordan domains $(D_n)_{n\geq 1}$, all contained in $D$, such that $((h^D,\rho_{z_1}^{D_n}),\cdots, (h^D,\rho_{z_k}^{D_n})) \to (h_\ve(z_1) ,\ldots, h_\ve(z_k))$ in a precise sense as $n\to \infty$.  More concretely, it is enough to show that for any $(a_1,\cdots, a_k)\in \R^k$, we can choose a sequence of analytic domains $(D_n)_{n\geq 1}$, such that setting $$Y_n:=\sum_{i=1}^k a_i (h^D,\rho_{z_i}^{D_n})  \;\; \text{and} \;\; Z:=\sum_{i=1}^k a_i h_\eps^{D}(z_i),$$ we have
\begin{equation}
\label{eqn::varto0}
\var(Y_n-Z)\to 0
\end{equation} as $n\to \infty$. Since $Y_n$ is Gaussian for every $n$ (by Proposition \ref{lemma::analytic_harm_avg_gaussian}) and $Z$ has finite variance, this shows that $Z$ is Gaussian.

So, we choose the $D_n$. This will involve first defining a sequence of auxiliary domains $D_n'$, that need not be analytic, and then using them to define the analytic domains $D_n$.

To begin, we observe that for $n\in \N$ with $1/n<\eps$, the balls $\{B_{z_i}(\eps+1/n): 1\le i \le k\}$ are disjoint. Let us choose a further point $z\in D$, that does not lie in any of these balls. It is easy to see (since the set $\{z_i\}_{1\le i \le k}$ is finite) that one can choose such a $z$, along with a smooth curve $\gamma_i$ from $z$ to $z_i$ for each $1\le i \le k$, and $c,c'\in (0,1)$ such that: 
\begin{itemize}
	\item $\gamma_i\cap \partial B(z_j,\eps)$ is empty for $i\ne j$ and consists of exactly one point when $i=j$, $1\le i \le k$; 
	\item the $c/n$ fattenings $\gamma_i^n:= \{z\in D: d(z,\gamma_i)<c/n\}$ of the $\gamma_i$ are such that $D'_n:=\bigcup_{1\le i \le k} \gamma_i^n \cup B_{z_i}(\eps+1/n)$ is a simply connected domain strictly contained in $D$ for every $n>1/\eps$;
	\item the boundary of $D_n'$ contains, for each $1\leq i \leq k$, the curve $\partial B_{z_i}(\eps+1/n)\setminus A_i^n$, where $A_i^n$ is an arc of $\partial B_{z_i}(\eps)$ that has length $\le c'/n$.
	\end{itemize}
We need the following basic statement that says, in some sense, that $D_n'$ is a good approximation to $\cup_i B_{z_i}(\eps)$ for large $n$.
\begin{lemma}\label{lem:dnclose}
	For every $1\le i \le k$, $$\sup_{D'_n\subset D'\subset D_{n/2}'}\iint G^{D'}(x,y) \rho_{z_i}^\eps(dx)\rho_{z_i}^\eps(dy)\to 0$$ as $n\to \infty$, where the supremum is over all simply connected domains $D'$ satisfying the indicated inclusions.
\end{lemma}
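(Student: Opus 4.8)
The plan is to evaluate the double integral in closed form for \emph{every} admissible $D'$, reducing the whole statement to a single convergence of conformal radii, and then to prove that convergence by a Carath\'eodory kernel argument.

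The key observation is that, because $\rho_{z_i}^\eps$ is rotationally invariant about $z_i$, for any simply connected domain $D'$ with $B_{z_i}(\eps)\subseteq D'$ one has the exact identity
\begin{equation}\label{eqn::dnclose_identity}
\iint G^{D'}(x,y)\,\rho_{z_i}^\eps(dx)\,\rho_{z_i}^\eps(dy)=\log\frac{R(z_i,D')}{\eps},
\end{equation}
where $R(z_i,D')$ is the conformal radius of $z_i$ in $D'$ as in \eqref{eq:rzw}. To see \eqref{eqn::dnclose_identity}, fix $x$ with $|x-z_i|\ge\eps$; then $G^{D'}(x,\cdot)$ is harmonic on $B_{z_i}(\eps)$, so the mean value property over the circle $\partial B_{z_i}(\eps)$ gives $\int G^{D'}(x,y)\,\rho_{z_i}^\eps(dy)=G^{D'}(x,z_i)$. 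Integrating in $x$ and writing $G^{D'}(x,z_i)=-\log|x-z_i|+h(x)$, where $h$ is the regular part (harmonic in $x$ on all of $D'$, with $h(z_i)=\log R(z_i,D')$ by definition of the conformal radius), the circle average of $-\log|x-z_i|$ contributes $-\log\eps$ and that of $h$ contributes $h(z_i)$, yielding \eqref{eqn::dnclose_identity}. Here I use the normalisation $G^{D'}(x,y)=-\log|x-y|+O(1)$, which is the one fixed by Lemma \ref{lem:k2_unit_disc} and Corollary \ref{cor::k_green}.

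Granting \eqref{eqn::dnclose_identity}, the proof finishes quickly. Since the conformal radius is monotone in the domain, for any $D'$ with $D_n'\subseteq D'\subseteq D_{n/2}'$ we have $\eps=R(z_i,B_{z_i}(\eps))\le R(z_i,D')\le R(z_i,D_{n/2}')$ (using $B_{z_i}(\eps)\subseteq D_n'\subseteq D'$ for the lower bound). Hence by \eqref{eqn::dnclose_identity},
\begin{equation*}
\sup_{D_n'\subseteq D'\subseteq D_{n/2}'}\;\iint G^{D'}(x,y)\,\rho_{z_i}^\eps(dx)\,\rho_{z_i}^\eps(dy)\;\le\;\log\frac{R(z_i,D_{n/2}')}{\eps},
\end{equation*}
so it suffices to prove that $R(z_i,D_m')\to\eps$ as $m\to\infty$ (writing $m=n/2$).

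The main obstacle, and the only place the geometry of $D_n'$ really enters, is this last convergence: note that the crude Koebe bound $R(z_i,D_m')\le 4\,\dist(z_i,\partial D_m')=4(\eps+1/m)$ is far too weak, since it only gives $R\le 4\eps$ in the limit. Instead I would show directly that the Carath\'eodory kernel of $(D_m')$ with respect to $z_i$ is exactly $B_{z_i}(\eps)$, and then invoke the Carath\'eodory kernel theorem to conclude $R(z_i,D_m')\to R(z_i,B_{z_i}(\eps))=\eps$. One inclusion is immediate from $B_{z_i}(\eps)\subseteq D_m'$. For the reverse, any point $q$ with $|q-z_i|>\eps$ lying on none of the connecting curves $\gamma_j$ and in none of the closed balls $\overline{B_{z_j}(\eps)}$ admits a small closed disc about it which, for $m$ large, is simultaneously disjoint from $B_{z_i}(\eps+1/m)$ (whose radius shrinks to $\eps$), from the pinching tubes $\gamma_j^m$ (of width $c/m\to 0$, Hausdorff-converging to the curves $\gamma_j$), and from the other balls $B_{z_j}(\eps+1/m)$ (which are at positive distance from $\overline{B_{z_i}(\eps)}$ by disjointness of the balls); such a disc is therefore not eventually contained in $D_m'$ and cannot lie in the kernel. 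Since $D_m'$ is connected and any open piece of a candidate kernel outside $\overline{B_{z_i}(\eps)}$ would have to escape through the pinching tube, no such piece survives, and the kernel is $B_{z_i}(\eps)$. Making this kernel computation fully rigorous — in particular ruling out contributions from the shrinking tubes and neighbouring balls — is the heart of the argument; everything else is a routine mean-value and monotonicity computation.
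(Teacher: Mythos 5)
Your proposal is correct, but it takes a genuinely different route from the paper's. The paper applies the mean value property only once, reducing the double integral to $\int G^{D'}(z_1,y)\,\rho_{z_1}^\eps(dy)$; it handles the supremum via monotonicity of the Green's function ($G^{D'}\le G^{D_{n/2}'}$), then writes $G^{D_{n/2}'}(z_1,y)=\log(1/\eps)-\E_y[\log(1/|B_{\tau}-z_1|)]$ probabilistically and lower-bounds the expectation by $\log(1/(\eps+2/n))(1-p_{y,n})$, where $p_{y,n}$ is the probability that a Brownian motion from $y$ escapes through the short boundary arc $A_1^{n/2}$; since $p_{y,n}\to 0$ for a.e.\ $y$, dominated convergence finishes (this needs the normalisation $\mathrm{diam}(D)\le 1$). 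You instead push the mean value property one step further to the exact identity $\iint G^{D'}\,d\rho\,d\rho=\log\bigl(R(z_i,D')/\eps\bigr)$, dispose of the supremum by monotonicity of the conformal radius, and prove $R(z_i,D_m')\to\eps$ by Carath\'eodory kernel convergence. Your identity is correct and in fact sharper than what the paper extracts: unwinding the paper's Brownian estimate, it is precisely showing $\log R(z_1,D_{n/2}')\to\log\eps$ without saying so, since $\E_y[\log|B_\tau-z_1|]$ is the regular part of the Green's function and its circle average is $\log R(z_1,D_{n/2}')$. So the two proofs carry the same content; yours isolates it as a clean closed form plus a soft complex-analysis limit, while the paper's is a hands-on estimate tied to the shrinking arcs $A_i^n$.

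Two small points to tighten. First, in your opening mean-value step the outer variable $x$ lies on $\partial B_{z_i}(\eps)$ itself, so $G^{D'}(x,\cdot)$ has a logarithmic singularity at the boundary point $y=x$, and the naive mean value property can fail for boundary singularities (think of a Poisson kernel); it does hold here, but the justification is the decomposition $G^{D'}(x,y)=-\log|x-y|+h(x,y)$ with $h$ harmonic in each variable, together with the classical circle-average formula $\int \log|x-y|\,\rho_{z_i}^\eps(dy)=\log\max(|x-z_i|,\eps)$, valid also when $|x-z_i|=\eps$ --- i.e.\ exactly the computation you already perform in your second step, so you should invoke it in the first step too. Second, the kernel computation you call ``the heart of the argument'' is easier than your sketch suggests, because the $D_m'$ decrease: any admissible kernel candidate $K$ is open with each of its points eventually in $D_m'$, hence $K$ lies in the interior of $\bigcap_m D_m'=\bigcup_j\bigl(\gamma_j\cup\overline{B_{z_j}(\eps)}\bigr)$; that interior is the disjoint union of the open balls $B_{z_j}(\eps)$ (the curves have empty interior, and the closed balls are pairwise disjoint), so connectedness of $K\ni z_i$ forces $K\subset B_{z_i}(\eps)$. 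Thus the kernel is $B_{z_i}(\eps)$ along every subsequence, and since $B_{z_i}(\eps)\subset D_m'$ for all $m$ the degenerate case of Carath\'eodory's theorem is excluded, giving $R(z_i,D_m')\to\eps$ as you claim.
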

\begin{proof}
Without loss of generality, we prove the result for $i=1$, and assume that $\text{diam}(D)\le 1$. 
Fix $D_n'\subset D' \subset D_{n/2}'$ simply connected. Then by harmonicity of the Green's function, we have 
\[ \iint G^{D'}(x,y) \rho_{z_1}^\eps(dx)\rho_{z_1}^\eps(dy)=\int G^{D'}(z_1,y) \rho_{z_1}^\eps(dy)\]
and also for $y\in \partial B_{z_1}(\eps)$, 
\begin{equation}
\label{eqn::greensmonbound}0\le G^{D'}(z_1,y)\le G^{D''}(z_1,y)=\log(1/\eps)-\mathbb{E}_y[\log(1/|B_{\tau_{D''}}-z_1|)],\end{equation} where the expectation $\mathbb{E}_y$ is for a Brownian motion $B$ starting from $y$, and $\tau_{D''}$ is its exit time from $D'':=D_{n/2}'$. 

Moreover, we have the upper bound $\mathbb{E}_y[\log(1/|B_{\tau_{D''}}-z_1|)]\ge \log(1/(\eps+2/n))(1-p_{y,n}),$ where $p_{y,n}$ is the probability that a Brownian motion started from $y$ exits $\partial B_{z_1}(\eps+2/n)$ through the boundary arc $A_1^{n/2}$. Since $p_{y,n}$ tends to $0$ as $n\to \infty$ for almost every $y\in B_{z_1}(\eps)$ (in fact, the only $y$ for which this fails to hold is the single intersection point of $\partial B_{z_1}(\eps)$ and $\gamma_1$), it follows by dominated convergence that $\int G^{D''}(z_1,y)\rho_{z_1}^\eps(dy)\to 0$ as $n\to \infty$. The lemma then follows from the inequalities on the left hand side of  \eqref{eqn::greensmonbound}.
\end{proof}
\begin{figure}[htbp]
	\begin{center}
		\includegraphics[scale = 0.5]{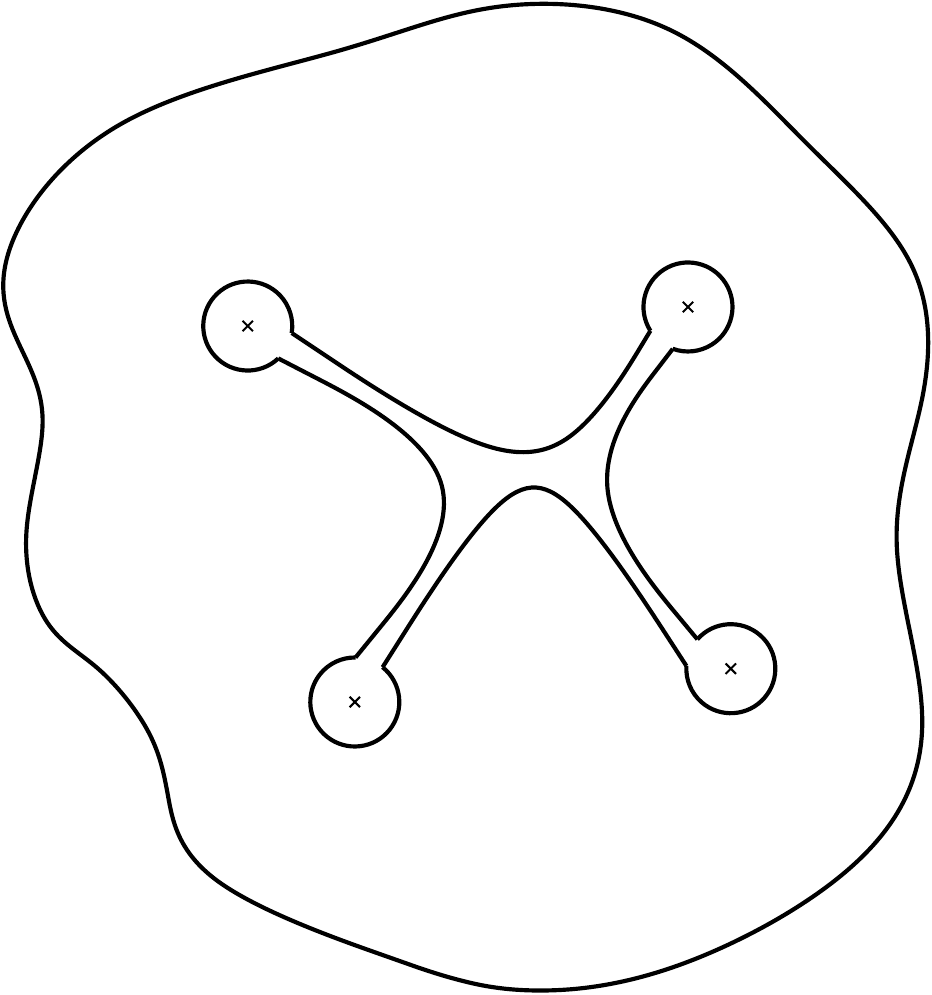}
		\caption{The domain $D_n'$ from the proof of \cref{prop:circle_avg_pointwise_Gaussian}. The boundary of $D_n'$ overlaps with $\partial B_{z_i}(\eps+1/n)$ for $1\leq i \leq k$ except on k small arcs $(A_i^n)_{1\le i\le k}$ with maximum length tending to $0$. Here $k=4$.}
		\label{fig:domain}
	\end{center}
\end{figure}
Now from the $(D_n')_n$ we define our sequence of domains $(D_n)_n$, such that $D_n$ is analytic, and also $D_n'\subset D_n\subset D_{n/2}'$ for each $n$. This second condition will allow us to apply Lemma \ref{lem:dnclose}. 

By the Riemann mapping theorem for doubly connected domains, we know that we can choose a conformal map $\phi$ from $D\setminus \overline{D_n'}$ to the annulus $\D\setminus \overline{r\D}$ for some unique $r\in (0,1)$.  For each $r<s<1$, denote by $D_n'(s)$ the complement in $D$ of the preimage of $\D\setminus \overline{s\D}$ under $\phi$. Then $D_n'(s)$ is a simply connected domain containing $D_n'$ for every $s\in (r,1)$, and $\cap_{s\in (r,1)}D_n'(s)$ is equal to $D_n'$. Hence there exists some $1>s_n>r$ such that $D'_n(s)$ is contained in $D_{n/2}'$. We then define $$D_n:= D'_n(s_n).$$
It is clear that $D_n$ is analytic for every $n$ (since by definition its boundary is the image of the unit circle under a conformal map that is defined in a neighbourhood of the circle) and also, by construction, that $D_{n/2}'\subset D_n\subset D_n'$. 

Having defined the $D_n$, we just need to prove (\ref{eqn::varto0}). Without loss of generality it is enough to show that
	$\mathbb{E}[(h^D_\eps(z_1)-(h^D,\rho_{z_1}^{\partial D_n}))^2]\to 0$ as $n\to \infty$. For this, write $h^D=h_D^{D_n}+\varphi_D^{D_n}$ using the domain Markov decomposition, so that $(h^D,\rho_{z_1}^{\partial D_n})=\varphi_D^{D_n}(z_1).$ Then since $B_{z_1}(\eps)\subset D_n$ we can further write $h_D^{D_n}=h_{D_n}^{B_{z_1}(\eps)}+\varphi_{D_n}^{B_{z_1}(\eps)}$, and by uniqueness, we must have $$h^D_\eps(z_1)=\varphi_{D_n}^{B_{z_1}(\eps)}(z_1)+\varphi_D^{D_n}(z_1).$$ Thus, we need to show that $\mathbb{E}[\varphi_{D_n}^{B_{z_1}(\eps)}(z_1)^2]\to 0$ as $n\to \infty$. However, from the definition of the circle average as an $L^2$ limit (Lemma \ref{lemma::circle_average_defn}) and the identification of the covariance structure \eqref{eqn:varhDphi}, we know that $$\mathbb{E}[\varphi_{D_n}^{B_{z_1}(\eps)}(z_1)^2]=\iint G^{D_n}(x,y)\rho_{z_1}^\eps(dx)\rho_{z_1}^\eps(dy).$$  The result then follows from Lemma \ref{lem:dnclose}.

\end{proof}

\section{Proof of Theorem \ref{thm::characterisation_gff}}\label{sec:Concl}
To conclude we prove convergence of the circle average field, which then implies Theorem \ref{thm::characterisation_gff} by Lemma \ref{lemma::circ_avg_good_approx}.

\begin{lemma}\label{lemma::conv_circ_avg_field}
	For any $\phi\in C_c^\infty (D)$, $(h_\eps^D,\phi)$ converges to $(h^D_{\text{GFF}},\phi)$ in distribution as $\eps\to 0$.
\end{lemma}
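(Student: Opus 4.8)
The plan is to realise $(h^D,\phi)$ as an $L^2$--limit of genuinely Gaussian random variables and then invoke the elementary fact that an $L^2$--limit of centred Gaussians is centred Gaussian. Since Lemma \ref{lemma::circ_avg_good_approx} already gives $(h_\eps^D,\phi)=\int_D h_\eps^D(z)\phi(z)\,dz \to (h^D,\phi)$ in $L^2$, it suffices to approximate each $(h_\eps^D,\phi)$ in $L^2$ by a Gaussian, well enough as $\eps\to 0$. The natural candidate is a Riemann sum of the circle average field, and the difficulty — the step I expect to be the main obstacle — is that Proposition \ref{prop:circle_avg_pointwise_Gaussian} only yields joint Gaussianity of $(h_\eps^D(z_i))_i$ when the $z_i$ are pairwise more than $2\eps$ apart, whereas a faithful Riemann approximation of the integral naively wants a mesh much finer than $\eps$.

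I would resolve this tension by coupling the mesh to $\eps$: fix a grid of mesh $\delta=3\eps$, let $Q_i$ denote the cells meeting $K:=\mathrm{supp}(\phi)$, let $z_i$ be their centres, and set $w_i=\int_{Q_i}\phi$ and $S_\eps:=\sum_i h_\eps^D(z_i)\,w_i$. The centres are then pairwise at distance $\ge 3\eps>2\eps$, and (for $\eps$ small, since $K\Subset D$) at distance $>2\eps$ from $\partial D$, so Proposition \ref{prop:circle_avg_pointwise_Gaussian} shows $(h_\eps^D(z_i))_i$ is jointly Gaussian and hence $S_\eps$ is a centred Gaussian. It remains to prove $\E[((h_\eps^D,\phi)-S_\eps)^2]\to 0$; combined with Lemma \ref{lemma::circ_avg_good_approx} this gives $S_\eps\to (h^D,\phi)$ in $L^2$, so $(h^D,\phi)$ is a centred Gaussian, of variance $\iint_{D^2} aG^D(x,y)\phi(x)\phi(y)\,dx\,dy$ by \eqref{eqn:varhDphi}. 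That is, $(h^D,\phi)$ has the law of $(\alpha h^D_{\gff},\phi)$ with $\alpha=\sqrt a$, and since $L^2$ convergence implies convergence in distribution, the stated convergence of $(h_\eps^D,\phi)$ follows at once.

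The heart of the matter is the error estimate. Writing $(h_\eps^D,\phi)-S_\eps=\int_K \big(h_\eps^D(z)-h_\eps^D(z_{i(z)})\big)\phi(z)\,dz$, where $z_{i(z)}$ is the centre of the cell containing $z$, I would expand the second moment as $\iint_{K^2}F(z,w)\phi(z)\phi(w)\,dz\,dw$ with $F(z,w)=\E[(h_\eps^D(z)-h_\eps^D(z_{i(z)}))(h_\eps^D(w)-h_\eps^D(z_{i(w)}))]$, and split into a near region $\{|z-w|\le 20\eps\}$ and a far region $\{|z-w|>20\eps\}$. On the near region, whose Lebesgue measure in $K^2$ is $O(\eps^2)$, the crude Cauchy--Schwarz bound $|F|\le C\log(1/\eps)$ coming from Lemma \ref{Variance} and Corollary \ref{cor:var_near_boundary} already yields a contribution of order $\eps^2\log(1/\eps)$. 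On the far region all four points $z,z_{i(z)},w,z_{i(w)}$ are pairwise more than $2\eps$ apart (as each cluster has radius $\le 5\eps$), so every cross--covariance equals $aG^D$ by Corollary \ref{cor::k_green} together with Lemma \ref{lemma::kernel_function}; thus $F(z,w)$ is a mixed second difference of $G^D$ with increments $O(\eps)$, and the off--diagonal smoothness estimate $|\partial^2 G^D(x,y)|\le C|x-y|^{-2}$ gives $|F(z,w)|\le C a\,\eps^2/|z-w|^2$. Integrating, the far contribution is $O\big(\eps^2\int_{|z-w|>20\eps}|z-w|^{-2}\,dw\,dz\big)=O(\eps^2\log(1/\eps))$. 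Hence $\E[((h_\eps^D,\phi)-S_\eps)^2]=O(\eps^2\log(1/\eps))\to 0$, which closes the argument. The only points needing care are the uniformity in $\eps$ of all constants and choosing the near/far threshold consistently with the mesh so that "far'' genuinely forces the four associated circles to be disjoint, allowing the exact identity $\E[h_\eps^D(x)h_\eps^D(y)]=aG^D(x,y)$ to be used there.
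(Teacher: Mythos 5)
Your proof is correct, but it takes a genuinely different route from the paper's. The paper proceeds by the method of moments: it shows $\E[(h_\eps^D,\phi)^n]\to\E[(h_{\gff}^D,\phi)^n]$ for every $n$, splitting the integral over $D^n$ into the set $A_\eps$ of pairwise $2\eps$-separated points, where Proposition \ref{prop:circle_avg_pointwise_Gaussian} and Isserlis' theorem give a Wick expansion that is passed to the limit by dominated convergence using the logarithmic covariance bound, and the complementary set $E_\eps$, whose contribution is shown to vanish via the a priori variance bounds. You instead build an explicit Gaussian approximation: a Riemann sum $S_\eps$ on a grid of mesh $3\eps$, the coupling of mesh to $\eps$ being precisely what makes Proposition \ref{prop:circle_avg_pointwise_Gaussian} applicable to the grid centres, and then conclude from the closure of centred Gaussians under $L^2$ limits. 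Your route needs only second-moment estimates (no Wick expansion, no $n$-point bounds, no appeal to moment determinacy), gives a quantitative rate $O(\eps^2\log(1/\eps))$, and concentrates the work in one clean estimate, namely the mixed second difference of the covariance on the far region; the price is one ingredient external to the paper, the off-diagonal bound $|\partial_x\partial_y G^D(x,y)|\le C_K|x-y|^{-2}$ on compacts, which is standard (write $G^D(x,y)=-\log|x-y|-H(x,y)$ with $H$ harmonic in each variable, hence with locally bounded derivatives) but should be stated as such. Two bookkeeping remarks. First, you cite Lemma \ref{Variance} and Corollary \ref{cor:var_near_boundary} for the variance of the circle-average field $h_\eps^D(z)$, but those are stated for the mollified field $\tilde h_\eps^D$; the bound you need follows instead from Lemma \ref{lemma::variance_harm_avg} together with Corollary \ref{cor::k_green} (the paper's own proof commits the same slight imprecision). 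Second, your limit law is that of $(\sqrt{a}\,h_{\gff}^D,\phi)$; this is the honest statement — the paper silently drops the constant $a$ in its final display — and it is exactly what the proof of Theorem \ref{thm::characterisation_gff} requires.
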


We first see how this implies Theorem \ref{thm::characterisation_gff}.

\begin{proof}[Proof of Theorem \ref{thm::characterisation_gff}]
	To prove that $h^D\overset{(d)}{=} h^D_{\text{GFF}}$ we need to show that for any $(\phi_1,\cdots, \phi_n)$ with $(\phi_i)_{1\leq i \leq n}\in C_c^\infty(D)$, $((h^D,\phi_1), \cdots, (h^D,\phi_n))$ is a Gaussian vector with mean $0$ and the correct covariance matrix. Equivalently, we need to show that for any $(u_1,\cdots, u_n)\in \R^n$, the sum $\sum_1^n u_i (h^D,\phi_i)$ is a centered Gaussian variable with the correct variance. By linearity, we therefore need only prove that for any $\phi\in C_c^\infty(D)$, $$(h^D,\phi)\overset{(d)}{=}(h^D_{\text{GFF}},\phi).$$
	
	So, we fix such a $\phi$. By Lemma \ref{lemma::circ_avg_good_approx}, we know that $\Var((h_\eps^D,\phi)-(h^D,\phi))\to 0$ as $\eps \to 0$. Thus $(h_\eps^D,\phi)$ converges to $(h^D,\phi)$ in distribution. From here, Lemma \ref{lemma::conv_circ_avg_field} implies the result.
	\end{proof}

\begin{proof}[Proof of Lemma \ref{lemma::conv_circ_avg_field}.]
	We will prove that for every $n\in \N$, $\mathbb{E}[(h_\eps^D,\phi)^n]\to \mathbb{E}[(h_{\text{GFF}}^D,\phi)^n]$ as $\eps\to 0$, which implies the result by the method of moments, \cite[Theorem 30.2]{billingsley}. This requires a bit of care however, since a priori it is not even clear that this moment is well defined when $n \ge 4$. 
	
	To show the convergence, we need to compute the limit as $\eps\to 0$ of
	\begin{equation}\label{eqn::moments_circ_avg}
	\mathbb{E}[(h_\eps^D,\phi)^n] =  \mathbb{E}\left[\iint_{A_\eps} \big(\prod_{i=1}^n h_\eps^D(z_i)\phi(z_i) dz_i\big)\right] +   \mathbb{E}\left[\iint_{E_\eps} \big(\prod_{i=1}^n h_\eps^D(z_i)\phi(z_i) dz_i\big)\right]=:I^{A_\eps}+I^{E_\eps},
	\end{equation}
	where in the middle term, we have decomposed the integral over $D^n$ into the integrals over $A_\ve  := \{ (z_1,\ldots, z_n) \in D^n: |z_i- z_j|>2\ve \text{ for all } i,j\}$ and $E_\ve := D^n \setminus A_\ve$. We assume that $\eps>0$ is always small enough that $d(z,\partial D)>2\eps$ for every $z$ in the support of $\phi$. We will consider the right hand side and show that both terms are well defined and finite, from which it will follow by Fubini's theorem that the moment on the left hand side is also finite. 
	
Let us first show that $I^{E_\eps}\to 0$ as $\eps\to 0$. This follows from our a priori bounds on the two point function in \cref{Variance}. Indeed, for any $z_1,\ldots, z_n$ in the support of $\phi$, we have that the $h_\ve(z_i)$  are marginally Gaussian (Proposition \ref{prop:circle_avg_pointwise_Gaussian}), and therefore the $n$th moment of $|h_\eps (z_i)|$ is at most $c \E( h_\eps(z_i)^2)^{n/2}$ for some constant $c$ depending only on $n$. Therefore by H\"{o}lder's inequality and \cref{Variance}, we have
	$$\mathbb{E} (\big(\prod_{i=1}^n |h_\eps^D(z_i|)) \le c(\log (1/\ve))^{n/2},$$
	for some constant $c$ depending on $n$ but not on $\eps$ (note this already implies that for fixed $\eps>0$, $(h_\eps, \phi)$ has finite $n$th moment). Hence we can apply Fubini to bring the expectation inside the integral in $I^{E_\eps}$, and conclude that
\begin{equation} \label{eqn:Eeps0}
\lim_{\ve\to 0}	I^{E_\eps} = \lim_{\ve \to 0} \, \left[\iint_{E_\ve} \mathbb{E} \big(\prod_{i=1}^n| h_\eps^D(z_i)\phi(z_i)| dz_i\big)\right] \le \lim_{\ve \to 0} c(\log (1/\ve))^{n/2} \ve^2 = 0.
	\end{equation}
Here we have used that the integral of $\prod_i |\phi(z_i)|$ over $E_\ve$ is $O(\ve^2)$: indeed the $n$-dimensional volume of $E_\ve$ is $O(\ve^2)$ by definition for fixed $n \ge 2$, and $\phi$ is bounded. 

Consequently, we need only consider the term $I^{A_\eps}$ on the right-hand side of \eqref{eqn::moments_circ_avg}.	
For this we use Proposition \ref{prop:circle_avg_pointwise_Gaussian}, which tells us that for every $(z_1,\cdots, z_n)\in A_\eps$ , $(h_\eps^D(z_1),\cdots, h_\eps^D(z_n))$ is multivariate normal with mean $(0, \ldots, 0)$. Therefore, by the Wick rule (to be more precise, Isserlis' theorem), we have that \begin{equation}
\label{eqn::wick}\mathbb{E}\big[\prod_{i=1}^n h_\eps^D(z_i)\big]= \mathbf{1}_{n \, \text{even}}\sum_{P:\text{pairings}}  \prod_{(i,j) \in P}\mathbb{E}[h^D_\eps(z_i)h^D_\eps(z_j)], \, \end{equation}
on $A_\eps$,
where the above sum is over all pairings of $\{1,2,\ldots, n\}$. In fact, by (\ref{eqn::lim_circav_kernel}), Proposition \ref{prop:K_a.s._finite}, Lemma \ref{lemma::variance_harm_avg} and Cauchy--Schwarz, we know that for any $z_i,z_j \in A_\ve$,

\begin{equation}
	\label{eqn::final_cov_bounds}
	|\E[h_\eps^D(z_i)h_\eps^D(z_j)] |\leq \log\left(\frac{1}{ |z_i-z_j|}\right)+O(1).
	\end{equation}  This allows us to deduce that the right hand side of (\ref{eqn::wick}) is bounded above by a function independent of $\ve$, that  is also integrable over $D^n$. Thus we can apply Fubini and then the dominated convergence theorem in (\ref{eqn::moments_circ_avg}), to see that \begin{align*}
	\lim_{\ve \to 0}\mathbb{E}[(h_\eps^D,\phi)^n]  =	\lim_{\ve\to 0} I^{A_\eps}
	& =  \mathbf{1}_{n \, \text{even}} \; \lim_{\eps\to 0} \,  \iint_{A_\ve}  \sum_{P:\text{pairings}}  \prod_{(i,j) \in P}\mathbb{E}[h^D_\eps(z_i)h^D_\eps(z_j)]  \prod_{j} \phi(z_j) dz_j \\
	& =  \mathbf{1}_{n \, \text{even}} \; \iint_{A_\ve} \, \lim_{\ve  \to 0}  \sum_{P:\text{pairings}}  \prod_{(i,j) \in P}\mathbb{E}[h^D_\eps(z_i)h^D_\eps(z_j)]  \prod_{j} \phi(z_j) dz_j \\
	& =  \mathbf{1}_{n \, \text{even}} \iint_{A_\eps}
\sum_{P:\text{pairings}}  \prod_{(i,j) \in P}G^D(z_i,z_j) \prod_{j} \phi(z_j) dz_j \\
& =  \mathbf{1}_{n \, \text{even}} \iint_{D^n}
\sum_{P:\text{pairings}}  \prod_{(i,j) \in P}G^D(z_i,z_j) \prod_{j} \phi(z_j) dz_j
\end{align*} where the penultimate line follows by \eqref{eqn::lim_circav_kernel}, and the final line by the same reasoning as in \eqref{eqn:Eeps0}. From this, it follows that that $\lim_{\ve\to 0} \mathbb{E}[(h_\eps^D,\phi)^n]= \mathbb{E}[(h_{\text{GFF}}^D,\phi)^n]$, and hence we have concluded the proof of Lemma \ref{lemma::conv_circ_avg_field} and of Theorem \ref{thm::characterisation_gff}.
	\end{proof}

\section{Proof of Theorem \ref{thm:BB}} \label{sec:BB}

First, we prove that the family $\{X^{[0,2^n]}(1)\}_{n\in \N}$ is tight:
	
	\begin{lemma}\label{lemma::tightness}
	For any $\eps>0$ there exists $M>0$ such that $\P(X^{[0,2^n]}(1)\ge M)\le \eps$ for all $n\in \N$.
	\end{lemma}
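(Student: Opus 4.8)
The plan is to set $M_n := X^{[0,2^n]}(1)$ and to derive a distributional recursion relating $M_{n+1}$ to $M_n$ by combining the domain Markov property (iv) with Brownian scaling (v). Concretely, take $I_{n+1}=[0,2^{n+1}]$ and the subinterval $I'=[0,2^n]$. The zero boundary condition gives $X^{I_{n+1}}(0)=0$, so the interpolating function $L$ in (iv) satisfies $L(1)=2^{-n}X^{I_{n+1}}(2^n)$, and hence $X^{I_{n+1}}(1)\eqd 2^{-n}X^{I_{n+1}}(2^n)+\tilde X^{[0,2^n]}(1)$, where $\tilde X^{[0,2^n]}$ is an independent copy of $X^{[0,2^n]}$, independent of $X^{I_{n+1}}(2^n)$. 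Applying the scaling relation with $c=2^{n+1}$ at the midpoint gives $X^{I_{n+1}}(2^n)\eqd 2^{(n+1)/2}G$ where $G:=X^{[0,1]}(1/2)$, so that $2^{-n}X^{I_{n+1}}(2^n)\eqd \sqrt{2}\,2^{-n/2}G$. Writing $\tilde X^{[0,2^n]}(1)\eqd M_n$ and using independence of the two summands, we obtain the recursion $M_{n+1}\eqd \sqrt{2}\,2^{-n/2}G + M_n$ with $G$ independent of $M_n$, together with $M_0=X^{[0,1]}(1)=0$ by (iii).

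Iterating this recursion, I would pass to a coupled representation: let $(G_k)_{k\ge 0}$ be i.i.d.\ copies of $G$ and set $\hat M_n:=\sum_{k=0}^{n-1}\sqrt{2}\,2^{-k/2}G_k$. A straightforward induction (using that the law of a sum of two independent variables is the convolution of their laws) shows $\hat M_n\eqd M_n$ for every $n$, so it suffices to prove tightness of the partial sums $\hat M_n$. Note the coefficients $c_k:=\sqrt{2}\,2^{-k/2}$ are summable, but the only integrability available for $G$ is the logarithmic tail bound $\E[\log^+|G|]<\infty$ coming from (i), so one cannot simply control $\E|\hat M_n|$ by $L^1$ estimates.

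The tightness itself I would obtain by a truncation argument, and this is the main obstacle, since only a logarithmic moment is assumed. Split each term at level $2^{k/4}$: write $\hat M_n=A_n+B_n$, where $A_n$ collects the contributions of $\{|G_k|\le 2^{k/4}\}$ and $B_n$ the rest. On the truncated part $|c_kG_k\mathbf{1}_{|G_k|\le 2^{k/4}}|\le \sqrt2\,2^{-k/4}$, whence $|A_n|\le \sum_{k\ge 0}\sqrt2\,2^{-k/4}=:C_0<\infty$ deterministically, so $A_n$ is automatically tight. For the overshoot part, the logarithmic moment gives $\sum_{k\ge0}\P(|G_k|>2^{k/4})=\sum_{k\ge0}\P(\log^+|G|>\tfrac{k}{4}\log 2)<\infty$ (using that $\sum_k\P(Y>ck)<\infty$ for any nonnegative $Y$ with $\E Y<\infty$ and $c>0$), so by Borel--Cantelli almost surely only finitely many $k$ satisfy $|G_k|>2^{k/4}$. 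Consequently $B_\infty:=\sum_{k\ge0}c_k|G_k|\mathbf{1}_{|G_k|>2^{k/4}}$ is an almost surely finite sum of finite terms and dominates $|B_n|$ for all $n$. Thus $|\hat M_n|\le C_0+B_\infty$ uniformly in $n$ with $C_0+B_\infty<\infty$ almost surely, and choosing $M$ so that $\P(C_0+B_\infty>M)\le\eps$ yields $\sup_n\P(M_n\ge M)\le\eps$, as required. The crux is that the rapid geometric decay of $c_k$ must be played off against the slowly growing truncation level $2^{k/4}$ so that both the deterministic bound on $A_n$ and the Borel--Cantelli summability for $B_n$ hold simultaneously.
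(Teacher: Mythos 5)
Your proof is correct, and its first half coincides with the paper's: the paper likewise combines Brownian scaling with a dyadic application of the domain Markov property to represent $X^{[0,2^n]}(1)$ in law as $2^{-n/2+1}\sum_{k=0}^{n-1}2^{k/2}X_k^{[0,1]}(1/2)$, a weighted sum of independent copies of $X^{[0,1]}(1/2)$ with geometrically decaying coefficients --- exactly your $\hat M_n$ up to reversing the index (your recursion $M_{n+1}\eqd M_n+\sqrt{2}\,2^{-n/2}G$ and the paper's recursion $Y_{n+1}\eqd Y_n/\sqrt{2}+2X_n^{[0,1]}(1/2)$ are the same identity read from opposite ends of the sum). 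Where you genuinely diverge is in how tightness is extracted from the bare assumption $\E[\log^+|G|]<\infty$. The paper stays entirely at the level of tail probabilities: choosing $a\in(1,\sqrt{2})$ and $b=1-a/\sqrt{2}$, it iterates the inequality $\P(|Y_{n+1}|\ge M)\le\P(|Y_n|\ge aM)+\P\bigl(|X|\ge \tfrac{b}{2}M\bigr)$ to obtain $\P(|Y_n|\ge M)\le\sum_{k=0}^n\P\bigl(|X|\ge a^k\tfrac{b}{2}M\bigr)$, and bounds this sum by $\frac{1}{\log a}\E\bigl[\log^+\bigl(\tfrac{2|X|}{bM}\bigr)\bigr]$, which tends to $0$ as $M\to\infty$ uniformly in $n$. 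You instead pass to a coupled realisation of the series, truncate at the geometric level $2^{k/4}$, and invoke Borel--Cantelli. Both arguments ultimately rest on the same fact --- a logarithmic moment makes tail probabilities at geometrically growing thresholds summable --- but the payoffs differ slightly: your route shows that $\sum_k c_kG_k$ converges absolutely almost surely, so in the coupling the variables $\hat M_n$ have an a.s.\ limit, which is strictly more than tightness of the laws; the paper's route produces an explicit quantitative modulus of tightness, namely the bound $\frac{1}{\log a}\E\bigl[\log^+\bigl(\tfrac{2|X|}{bM}\bigr)\bigr]$ valid for every $n$ simultaneously. Your appeals to the domain Markov property (with a subinterval sharing the left endpoint, interpolating against the value at the subinterval's right endpoint) match how the paper itself uses that assumption, so there is no gap there.
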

	
\begin{proof}
	First observe that $X^{[0,2^n]}(1)\eqd 2^{\frac{n}{2}} X^{[0,1]}(2^{-n})$, by the assumption of Brownian scaling. Then, by iteratively dividing the interval $[0,1]$ into two and using scaling and the Markov property again, we can write
	\begin{equation}
	\label{eqn::domain_m_decomp}
	2^{\frac{n}{2}}X^{[0,1]}(2^{-n}) \eqd 2^{-\frac{n}{2}+1} \sum_{k=0}^{n-1}2^{\frac{k}{2}}X^{[0,1]}_k(1/2),
	\end{equation}
	where the $(X^{[0,1]}_k \, : \, 0\leq k \leq n-1)$ are independent copies of $X^{[0,1]}$. Write $Y_n$ for the right hand side of \eqref{eqn::domain_m_decomp}, and let $X$ have the law of $X^{[0,1]}(1/2)$. By Assumptions \ref{ass:BB} we know that
	\begin{equation}
	\label{eqn::poly_tails_X}
\mathbb{E}[\log^+(|X|)]<\infty.
	\end{equation} The idea is to derive a uniform bound (in $n$) for $\P(|Y_n|>M)$ by recursion.
	
	To do this, write \[Y_{n+1}=\frac{Y_n}{\sqrt{2}} + 2X_n^{[0,1]}(1/2)\] (where $X_n^{[0,1]}(1/2)$ has the same distribution as $X$). This means that if we pick some $a\in (1,\sqrt{2})$ and set $b=1-\frac{a}{\sqrt{2}}\in (0,1)$ we have that
	\[ \mathbb{P}(|Y_{n+1}|\ge M) \leq \mathbb{P}(|Y_n|\geq aM)+\P(|X|\ge \frac{b}{2}M).\]
	Since $\mathbb{P}(|Y_0|\ge M)=\P(|X|\ge M/2)$ we have by iteration that
	\[ \P(|Y_n|\geq M)\le \P(|X|\ge a^n \frac{M}{2}) + \sum_{k=0}^{n-1} \P(|X|\ge a^k b \frac{M}{2}) \le \sum_{k=0}^n \P(|X|\ge a^k b \frac{M}{2}),\]
	and we can bound this sum above by
	\[\sum_{k=1}^\infty \P\left(\log^+\left(\frac{2|X|}{bM}\right)\ge k \log a\right) \le \frac1 {\log a}\int _0^\infty\P\left(\log^+\left(\frac{2|X|}{bM}\right) \ge t\right) dt= \frac1 {\log a}\E\left[\log^+\left(\frac{2|X|}{bM}\right)\right].\]
	By \eqref{eqn::poly_tails_X} the right hand side converges to $0$ as $M\to \infty$, and it is clearly uniform in $n$, which completes the proof.
\end{proof}

We now claim that, locally, the process $X^{[0,2^n]}$ (in the large $n$ limit) has to be  a constant times a Brownian motion.

\begin{lemma}\label{lem:local_BM}
	We have the following convergence in the sense of finite dimensional distributions:
	$$
	(X^{[0,2^n]}(t))_{t\in[0, 1]} \xrightarrow[(d)]{n \to \infty} \sigma(B(t))_{t \in [0,1]}
$$
	for some constant $\sigma \ge 0$ where $(B(t))_{t\geq 0}$ is a standard Brownian motion.
\end{lemma}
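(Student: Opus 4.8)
The plan is to prove that the family $\{(X^{[0,2^n]}(t))_{t\in[0,1]}\}_n$ is tight at the level of finite-dimensional distributions, and then to show that \emph{every} subsequential limit is $\sigma B$ for one and the same constant $\sigma\ge 0$; convergence in finite-dimensional distributions follows. For tightness, fix times $0\le t_1<\dots<t_m\le 1$. Tightness of the single coordinate $X^{[0,2^n]}(1)$ is exactly Lemma \ref{lemma::tightness}, and by Brownian scaling the same bound controls each $X^{[0,2^n]}(t_i)$. To get joint tightness I would use the domain Markov property to write the vector $(X^{[0,2^n]}(t_i))_i$ in terms of the values of $X^{[0,2^n]}$ on a fixed dyadic grid refining $\{t_i\}$: each coordinate is a convex combination of grid values plus independent bridge contributions whose laws, by translation invariance and scaling, are governed by copies of $X^{[0,1]}(1/2)$. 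This reduces multi-point tightness to the single-point estimate.

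Next I would record the properties that a subsequential limit $Y=(Y(t))_{t\in[0,1]}$ inherits. Clearly $Y(0)=0$ by the zero boundary condition, and $Y$ is stochastically continuous by Assumption \ref{ass:BB}(ii). The key inherited feature is Brownian scaling: since $X^{[0,2^n]}(t/2)\eqd \tfrac{1}{\sqrt2}X^{[0,2^{n+1}]}(t)$ as processes on $[0,1]$, and (passing to a further subsequence if needed) $X^{[0,2^{n+1}]}$ has the \emph{same} limit $Y$, taking limits gives $(Y(t/2))_t\eqd \tfrac{1}{\sqrt2}(Y(t))_t$, hence $Y(2^{-k}\cdot)\eqd 2^{-k/2}Y(\cdot)$ by iteration. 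Finally one must show that $Y$ satisfies the harness property on $[0,1]$, namely that for $0\le s<u\le 1$ the conditional expectation of $Y$ on $[s,u]$ given its values off $(s,u)$ is the linear interpolation of $Y(s)$ and $Y(u)$; I would establish this through the two- and four-point moments computed in the prelimit rather than by conditioning directly in the limit, which is delicate. I stress that one should \emph{not} expect independence or stationarity of increments to be available at finite $n$: the prelimit $X^{[0,2^n]}$ is a bridge and therefore has correlated increments, so such properties can emerge only in the limit and cannot themselves drive the proof.

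With these properties in hand, I would conclude that $Y=\sigma B$ by a Williams-type, stochastic-calculus argument in the spirit of the introduction. The harness property, applied on all sub-intervals together with an appropriate centering and filtration, produces a continuous martingale; Brownian scaling forces its quadratic variation to be the deterministic function $\sigma^2 t$; and Lévy's characterisation (equivalently the Dubins--Schwarz theorem) then identifies $Y$ as $\sigma$ times a standard Brownian motion. Any additive drift $m(t)$ is linear in $t$ (from the limiting independent-increment structure) while Brownian scaling forces $m(t/2)=\tfrac{1}{\sqrt2}m(t)$, so that $m\equiv 0$; combined with $Y(0)=0$ this kills the drift. Since the resulting $\sigma^2$ is the common limit of the relevant second-moment quantity, all subsequential limits agree, which gives the claim.

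The main obstacle is precisely the extraction of Gaussianity from the harness property, and this is genuinely a \emph{global} use of the intervals. It cannot be deduced from scaling and the single-time Markov recursion alone: the one-point recursion $Y_{n+1}\eqd \tfrac{1}{\sqrt2}Y_n+2X^{[0,1]}(1/2)$ from the proof of Lemma \ref{lemma::tightness} admits non-Gaussian fixed points whenever $X^{[0,1]}(1/2)$ is non-Gaussian, so the dyadic self-similar structure by itself is consistent with non-Gaussian self-similar solutions. It is the harness property for \emph{all} (not merely dyadic) intervals, together with the continuity and integrability upgrade, that rules these out and yields the martingale with deterministic quadratic variation. Making that upgrade rigorous is the technical heart of the argument, since we assume only $\E[\log^+|X^I(t)|]<\infty$ rather than finite variance; a priori moments must therefore be bootstrapped from the structure before Lévy's characterisation can be applied.
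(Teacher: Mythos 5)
Your first step (reduction to subsequential limits of the single random variable $X^{[0,2^n]}(1)$ via Lemma \ref{lemma::tightness} and the decomposition $X^{[0,2^n]}(t)=\tilde X^{[0,1]}(t)+tX^{[0,2^n]}(1)$) matches the paper. But your route from there to Gaussianity has a genuine gap, and the gap is created by a claim you make explicitly: that independence and stationarity of increments ``cannot themselves drive the proof'' because the prelimit is a bridge. This dismisses exactly the mechanism that works. At finite $n$ the increments of $X^{[0,2^n]}$ are indeed correlated, but when one applies the domain Markov property to $[0,1]\subset[0,2^{n(k)}]$, the bridge (linear-interpolation) corrections enter with coefficients of order $(t_{j+1}-t_j)/(2^{n(k)}-t_j)\to 0$, so the subsequential limit $Y$ has genuinely independent and stationary increments; being stochastically continuous it is a L\'evy process, and the scaling relation $Y(t)\eqd\sqrt{t}\,Y(1)$ then forces the L\'evy--Khinchin exponent to satisfy $t\psi(\theta)=\psi(\sqrt{t}\,\theta)$, hence $\psi(\theta)=\theta^2\psi(1)$, which already identifies $Y$ as a multiple of Brownian motion with no stochastic calculus at all. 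This also disposes of your proposed counterexample: the point is not the dyadic one-point recursion alone, but the emergent independent-increment structure together with scaling.

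The alternative you propose in its place cannot be completed as described. First, you want to establish the harness property of $Y$ ``through the two- and four-point moments computed in the prelimit,'' but under Assumptions \ref{ass:BB} the only integrability available is $\E[\log^+|X^I(t)|]<\infty$; the two- and four-point kernels belong to the two-dimensional Assumptions \ref{ass:ci_dmp} and simply do not exist here a priori, so this step has no foundation (as you yourself concede, moments must be bootstrapped --- but the bootstrap in the paper, via domination of the tails of $X^{[0,1]}(t)$ by the Gaussian tails of $Y(t)$, is only possible \emph{after} $Y$ has been identified as a multiple of Brownian motion). Second, L\'evy's characterisation requires a \emph{continuous} martingale, and path continuity of $Y$ is not available at this stage: in the paper, continuity of $X^{[0,1]}$ (Corollary \ref{C:contmod}) is a \emph{consequence} of Lemma \ref{lem:local_BM}, so invoking it inside the proof of the lemma would be circular. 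A smaller, fixable issue: your derivation of scaling compares limits of $X^{[0,2^{n(k)}]}$ and $X^{[0,2^{n(k)+1}]}$, which may a priori have different subsequential limits; the paper avoids this by writing $\sqrt{t}\,X^{[0,2^{n(k)}/t]}(1)\eqd\sqrt{t}\,\tilde X^{[0,2^{n(k)}]}(1)+2^{-n(k)/2}X^{[0,1]}(t)$ and letting the second term vanish, so that the scaling identity holds for one and the same limit $Y$.
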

\begin{proof}
	Step one is to show that for any sequence of natural numbers going to infinity, there exists a subsequence $n(k)$ such that $(X^{[0,2^{n(k)}]}(t))_{t\in[0, 1]}$ converges as $k \to \infty$ (in the sense of finite-dimensional distributions). To do this, we write by the domain Markov property applied to the subinterval $[0,1] \subset [0,2^n]$:
	\begin{equation}\label{eqn::local_decomposition_1d}
		(X^{[0,2^n]}(t))_{t\in[0, 1]} = (\tilde{X}^{[0,1]}(t) + tX^{[0,{2^n}]}(1)) _{t\in[0, 1]},
	\end{equation}
	where $\tilde{X}^{[0,1]}$ is an independent copy of $X^{[0,1]}$.
This means that to show {convergence of (all) the finite dimensional distributions of $X^{[0,2^n]}$ along (the same) subsequence}, it suffices to show that $X^{[0,2^n]}(1)$ has subsequential limits. However, this is just a consequence of Lemma \ref{lemma::tightness}.

So now assume that we have a subsequence $(n(k): k\ge 1)$ such that $(X^{[0,2^{n(k)}]}(t))_{t\in[0, 1]}$ converges to $(Y(t))_{t\in[0, 1]}$ in law for finite-dimensional distributions. If we can show that $Y_t = \sigma B_t$ in the sense of finite-dimensional distributions for some $\sigma\ge 0$ (not depending on the subsequence), then we will have completed the proof.

\medskip We first show that $Y$ has independent and stationary increments. Pick $0=t_0\leq t_1\leq \cdots \leq t_l \leq t_{l+1} =1$, and observe that by the Markov property,
	\begin{equation}
		\label{eqn::law_subseq_lim}
		\big(Y(t_1),Y(t_2)-Y(t_1),\cdots, Y(t_l)-Y(t_{l-1}), Y(1)-Y(t_l)\big)\end{equation} is a limit in distribution as $k\to \infty$ of
	$$
\big(X^{[0,2^{n(k)}]}_1(t_1)\,,\, \frac{t_2-t_1}{2^{n(k)}-t_1}Z^{k}_1 + X^{[0,2^{n(k)}-t_1]}_2(t_2-t_1)\,, \cdots,\frac{1-t_l}{2^{n(k)}-t_l} Z^{k}_l +X^{[0,2^{n(k)}-t_l]}_{l+1}(1-t_l) \big)
$$
	where the $X^{[0,2^{n(k)}-t_{j-1}]}_j$ (for $1\leq j\leq l+1$) are independent copies of $X^{[0,2^{n(k)}-t_{j-1}]}$ ; and
$Z^{k}_j$ for $2\le j \le l$ is defined recursively by
$$
Z^{k}_1=X^{[0,2^{n(k)}]}_1(t_1)\;\; ; \;\; Z^{k}_j=\sum_{i=1}^{j-1} \frac{t_{i+1}-t_i}{2^{n(k)}-t_i}Z^{k}_i + \sum_{i=1}^{j} X^{[0,2^{n(k)}-t_{i-1}]}_i(t_i-t_{i-1}) \text{ for } 2\le j\le l+1.
$$
	
	Now we claim that for any $s\in [0,1]$ and $u\in [0,1)$, $X^{[0,2^{n(k)}-s]}(u)$ converges in distribution as $k\to \infty$ to the same limit as $X^{[0,2^{n(k)}]}(u)$. To see this, we write by scaling and \eqref{eqn::local_decomposition_1d}, whenever $k$ is large enough that $2^{n(k)}(2^{n(k)}-s)^{-1}u\le 1$:
	\begin{align*}
	X^{[0,2^{n(k)}-s]}(u) & \overset{(d)}{=} \sqrt{\frac{2^{n(k)}-s}{2^{n(k)}}} X^{[0,2^{n(k)}]}\left(\frac{2^{n(k)}}{2^{n(k)}-s} u\right) \\
	& \overset{(d)}{=} \sqrt{\frac{2^{n(k)}-s}{2^{n(k)}}} \left( \tilde{X}^{[0,1]}\left(\frac{2^{n(k)}}{2^{n(k)}-s} u\right)+\left(\frac{2^{n(k)}}{2^{n(k)}-s} u\right)X^{[0,2^{n(k)}]}(1)\right)
	\end{align*}
	where $\tilde{X}^{[0,1]}$ is an independent copy of $X^{[0,1]}$. Since $X^{[0,1]}$ is stochastically continuous, the claim follows.
	
	By the above claim, an induction argument, and the fact that $(t_{j+1}-t_j)/(2^{n(k)}-t_j) \to 0$ as $k\to \infty$, it follows that $(t_{j+1}-t_j)/(2^{n(k)}-t_j) \times Z_j^k$ converges to $0$ in distribution as $k\to \infty$ for every $1\le j\le l$. This means that
 the law of (\ref{eqn::law_subseq_lim}) is the same as the limit in distribution of
	$$\big(X^{[0,2^{n(k)}]}_1(t_1)\,,\,  X^{[0,2^{n(k)}]}_2(t_2-t_1)\,, \cdots, X^{[0,2^{n(k)}]}_{l+1}(1-t_l) \big).$$
	For this last step we have also used the independence of the $(X_j)$, the fact that the $(t_{j+1}-t_j)/(2^{n(k)}-t_j)\times Z_j^k$ actually converge in probability (because they converge in distribution to a constant), and the claim one more time.
	
 Finally, by independence of the $X_j$ again, we deduce that the entries in \eqref{eqn::law_subseq_lim} (and so the increments of $Y$) must be independent.
	Furthermore the distribution of the $j$th entry depends only on $t_j-t_{j-1}$ and so the increments are stationary. Hence, $(Y(t))_{t\in [0,1]}$ has independent and stationary increments. $Y$ is also continuous in probability at every $t$, because of (\ref{eqn::local_decomposition_1d}) and Assumptions \ref{ass:BB}. Thus $Y$ is a L\'evy process on $[0,1]$ (and can be extended to a L\'evy process on all of $[0, \infty)$ by adding independent copies on $[1,2], [2,3], \ldots $).
	
	Now it is clear that $Y$ also enjoys the scaling property: for $t \le 1$,
\begin{equation*}
		Y(t) = \lim_{k \to \infty}  X^{[0,2^{n(k)}]}(t)  = \lim_{k\to \infty} \sqrt{t}X^{[0,2^{n(k)}/t]}(1)  =  \sqrt{t}Y(1)
	\end{equation*}
	where all the equalities above are in law and the limits are in the sense of distribution.
To justify the last equality we write, by the domain Markov property,
		\[
		\sqrt{t} X^{[0,2^{n(k)}/t]}(1)\overset{(d)}{=} \sqrt{t} \tilde{X}^{[0,2^{n(k)}]}(1)+\sqrt{t}2^{-n(k)}X^{[0,2^{n(k)}/t]}(2^{n(k)}), \] where $X$ and $\tilde{X}$ are independent. Since the first term converges to $\sqrt{t}Y(1)$ in distribution, and the second, by scaling, is equal in distribution to $2^{-n(k)/2} X^{[0,1]}(t)$, we obtain the result.
	
Because $Y$ is a L\'evy process, we know that for any $\theta \in \R$, the characteristic function of $Y$ can be written as
	$$
	\E[e^{i\theta Y(t)}] = e^{\psi(\theta)t}.
	$$
(In fact, by the L\'evy--Khinchin theorem, $\psi$ has an explicit representation which will not be required here). By scaling,
$$
\E[e^{i \theta Y(t)}] = \E[e^{i \theta \sqrt{t} Y(1)}]
$$
so that
$$
t \psi (\theta) = \psi (\sqrt{t} \theta)
$$
for any $\theta >0$ and any $t \ge 0$. Set $\sqrt{t} \theta =1$ so that $t = 1/\theta^2$. Then we deduce that
$$
\psi(\theta) = \theta^2 \psi(1)
$$
for all $\theta>0$. Since $| \E[e^{i \theta Y(t)} ] | \le 1$ we see that $\psi(\theta) \le 0$ and hence it follows that $Y$ is a multiple of Brownian motion. (While we only know the characteristic function in the positive half-line, this is enough to compute the moments and check that this matches with those of a Gaussian random variable). In other words, $Y $ is $\sigma$ times a standard Brownian motion.

The final thing to check is that $\sigma$ does not depend on the subsequence along which we assumed convergence. We first argue that, {for any fixed $t \in [0,1]$}, $X^{[0,1]}(t)$ has Gaussian tails and thus has moments of arbitrary order. Applying the Markov property, $Y$ is the limit in distribution as $k\to \infty $ of $(X^{[0,1]}(t)+tX^{[0,2^{n(k)}]}(1))_{t\in [0,1]}$, where the two terms on the right are independent. Hence we can write
\begin{equation}\label{relYX}
Y(t) \overset{(d)}{=} X^{[0,1]}(t) + t \tilde Y(1).
\end{equation}
From this it follows that the tails of $X^{[0,1]}(t)$ are dominated by those of $Y(t)$. Indeed, for any fixed $t\in[0,1]$ fix a constant $c \in \R$ such that $\P( t \tilde Y(1) \ge c ) > 0$ and $\P( t \tilde Y(1) \le c ) >0$. Then for all $x>0$,
$$
\P( Y (t) \ge x +c) \ge \P(X^{[0,1]}(t) \ge x ) \P( t \tilde Y(1) \ge c)
$$
so that
$$
\P(X^{[0,1]}(t) \ge x )  \le \frac1{\P( t \tilde Y(1) \ge c)} \P( Y (t) \ge x +c).
$$
This means that the right tail of $X^{[0,1]}(t)$ is at most a constant times that of $Y(t)$, which is Gaussian as $Y$ is a multiple of Brownian motion. A similar argument can be made for the left tail of $X^{[0,1]}(t)$. Hence we have proved that this random variable has moments of arbitrary order as claimed.

Furthermore, observe that by the domain Markov property and scaling,
	\begin{align*}X^{[0,T]}(1) &\overset{(d)}{=} \tilde{X}^{[0,S]}(1)+\frac{1}{S}X^{[0,T]}(S) \\
& = \tilde{X}^{[0,S]}(1)+ \frac{1}{\sqrt{S}}X^{[0,T/S]}(1)
	\end{align*}
	for any $T\ge S$, where $\tilde{X}$ and $X$ are independent. This implies $\var X^{[0,T]}(1)$ (which is well defined by the above) is an increasing function of $T$. Moreover, referring back to \eqref{eqn::domain_m_decomp}, we see that this variance is uniformly bounded and hence $ \var X^{[0,T]}(1)$ converges to a limit {as $T\to \infty$}: call it $s^2$.  By \eqref{relYX}, $\E[X^{[0,1]}(t)] = 0$, and so using  \eqref{eqn::domain_m_decomp} and the same argument again, we see that in fact the fourth moment of $X^{[0, 2^n]}(1)$ is bounded in $n$. Hence $\E[X^{[0,2^{n(k)}]}(1)^2]$ converges to $\E[ Y(1)^2] = \sigma^2$, but this limit must also be $s^2 = \lim_{T \to \infty } \E[X^{[0,T]}(1)^2]$ and so cannot depend on the subsequence $n(k)$. This means that the subsequential limit $Y$ does not depend on the subsequence, and hence the lemma is proved.
	
\end{proof}
In particular, an important consequence of this convergence is the following corollary:

\begin{corollary}\label{C:contmod}
   $X^{[0,1]}$ has a continuous modification.
\end{corollary}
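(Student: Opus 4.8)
The plan is to exploit the finite-dimensional convergence already obtained in Lemma \ref{lem:local_BM}, which identifies the scaling limit $Y=\sigma B$ of the endpoint-rescaled processes, and to show from it that $X^{[0,1]}$ has exactly the finite-dimensional distributions of a scaled Brownian bridge; continuity of a modification then follows from Kolmogorov's criterion. The crucial input is that the identity underlying \eqref{relYX} is a \emph{joint} statement in $t\in[0,1]$: it arises from the process-level limit of $(X^{[0,1]}(t)+tX^{[0,2^{n(k)}]}(1))_{t\in[0,1]}$, so we may read it as
$$(Y(t))_{t\in[0,1]}\overset{(d)}{=}\big(X^{[0,1]}(t)+t\,\tilde Y(1)\big)_{t\in[0,1]},$$
where $Y=\sigma B$, $\tilde Y(1)\overset{(d)}{=}Y(1)$, and $\tilde Y(1)$ is independent of $X^{[0,1]}$.

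First I would pass to the bridge by applying the (measurable, hence finite-dimensional-distribution preserving) functional $(\omega(t))_{t}\mapsto(\omega(t)-t\omega(1))_{t}$ to both sides. On the right, the zero boundary condition $X^{[0,1]}(1)=0$ (Assumptions \ref{ass:BB}, part (iii)) forces the endpoint value to equal $\tilde Y(1)$, so the term $t\,\tilde Y(1)$ cancels and one is left precisely with $X^{[0,1]}(t)$; on the left one obtains the scaled bridge $\sigma(B(t)-tB(1))$. This yields
$$(X^{[0,1]}(t))_{t\in[0,1]}\overset{(d)}{=}\big(\sigma(B(t)-tB(1))\big)_{t\in[0,1]},$$
i.e.\ $X^{[0,1]}$ has the finite-dimensional distributions of $\sigma$ times a standard Brownian bridge.

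With this identity the continuity estimate transfers for free. For $0\le s\le t\le 1$ the increment $X^{[0,1]}(t)-X^{[0,1]}(s)$ has the law of the centered Gaussian bridge increment, of variance $\sigma^2(t-s)(1-(t-s))$, whence
$$\E\big[|X^{[0,1]}(t)-X^{[0,1]}(s)|^4\big]=3\sigma^4(t-s)^2(1-(t-s))^2\le 3\sigma^4|t-s|^2.$$
Kolmogorov's continuity criterion (with exponents $4$ and $2$) then produces a continuous, indeed locally H\"older, modification of $X^{[0,1]}$, proving the corollary.

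I do not expect a genuine obstacle here; the one point requiring care is the first step, namely checking that \eqref{relYX} may legitimately be used jointly over all times in $[0,1]$ (including the endpoint $t=1$) so that the bridge functional can be applied simultaneously to every finite-dimensional marginal. Once that is granted, finiteness of the fourth moments is automatic because the comparison process is Gaussian, so all moments of the increments are finite and explicitly computable, and no further input beyond Lemma \ref{lem:local_BM} and the zero boundary condition is needed.
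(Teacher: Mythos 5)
Your proof is correct, but it takes a genuinely different route from the paper's, and in fact proves strictly more. The paper's argument for Corollary \ref{C:contmod} is soft: since $Y=\sigma B$ has a continuous modification, since by the joint-in-$t$ form of \eqref{relYX} the process $(X^{[0,1]}(t)+t\tilde Y(1))_{t\in[0,1]}$ has the same finite-dimensional distributions as $Y$, and since existence of a continuous modification is a property of those distributions alone, one may subtract the a.s.\ linear (hence continuous) summand $t\mapsto t\tilde Y(1)$ and conclude that $X^{[0,1]}$ inherits a continuous modification — no law is identified and no moment bound is needed. You instead push the same joint identity through the bridge map $\omega\mapsto(\omega(t)-t\omega(1))_{t}$, which is legitimate not merely because the map is measurable but because each finite-dimensional marginal of the image is a continuous function of a finite-dimensional marginal of the original (the chosen times together with $t=1$); the zero boundary condition $X^{[0,1]}(1)=0$ then cancels the linear term on the right, giving $(X^{[0,1]}(t))_{t\in[0,1]}\overset{(d)}{=}\bigl(\sigma(B(t)-tB(1))\bigr)_{t\in[0,1]}$, after which Kolmogorov's criterion applies with the explicit Gaussian bound. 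Your key step — using \eqref{relYX} jointly in $t$ with $\tilde Y(1)$ independent of $X^{[0,1]}$ — is exactly what the domain Markov property delivers, and is asserted in this joint form by the paper itself when deriving \eqref{relYX}, so there is no gap. What your route buys is substantial: the displayed identity \emph{is} the conclusion of Theorem \ref{thm:BB} for $I=[0,1]$ (the general interval follows by translation and scaling invariance), so your argument short-circuits the remainder of Section \ref{sec:BB} — the auxiliary process $W$, the independent-increments and quadratic-variation computations, and L\'evy's characterisation all become logically unnecessary. What the paper's route buys is economy at this particular point: two lines, no computation, and no use of the boundary condition.
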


\begin{proof}
By Lemma \ref{lem:local_BM}, the limit $Y$ in distribution as $n\to \infty $ of $(X^{[0,1]}(t)+tX^{[0,2^n]}(1))_{t\in [0,1]}$ is a multiple of Brownian motion and so has a continuous modification. Since the first of the two summands is simply $X^{[0,1]}$ and does not depend on $n$, and since the second summand is in the limit a.s. a linear function, we deduce that $X^{[0,1]}$ has a continuous modification.
\end{proof}

Now the main idea is to use the following change of variables which turns a Brownian motion to a Brownian bridge:

\begin{lemma}\label{lem:bb}
Let $(X(t))_{t \in [0,1]}$ be a process defined by $X(1) :=0$ and
$$
X(t) := (1-t) Z\left(\frac{t}{1-t}\right) \quad ;\quad t \in [0,1)
$$
where $(Z(s))_{s \in [0,\infty)}$ is a standard Brownian motion. Then $(X(t))_{t \in [0,1]}$ is a standard Brownian bridge on $[0,1]$.
\end{lemma}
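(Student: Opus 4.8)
The plan is to identify $(X(t))_{t\in[0,1]}$ by the three properties that characterise a standard Brownian bridge: it is a centered Gaussian process, it has the covariance $\cov(X(s),X(t)) = s\wedge t - st$, and it has continuous sample paths. Since $X$ is by construction a pointwise (deterministic, affine in each coordinate) functional of the Gaussian process $Z$, the first two properties reduce to linear algebra, and the only genuine work is at the endpoint $t=1$.

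First I would check Gaussianity and the mean. For any $0\le t_1<\cdots<t_n<1$ the vector $(X(t_1),\ldots,X(t_n))$ is a deterministic linear image of $(Z(t_1/(1-t_1)),\ldots,Z(t_n/(1-t_n)))$, hence jointly Gaussian and centered; adjoining the degenerate coordinate $X(1)=0$ changes nothing. So $X$ is a centered Gaussian process. Next I would compute the covariance: for $s\le t<1$ set $u=s/(1-s)$ and $v=t/(1-t)$, and note that $x\mapsto x/(1-x)$ is increasing on $[0,1)$, so $u\le v$. Then
$$\cov(X(s),X(t)) = (1-s)(1-t)\,\cov(Z(u),Z(v)) = (1-s)(1-t)\,u = (1-s)(1-t)\frac{s}{1-s} = s(1-t),$$
which is exactly $s\wedge t - st$. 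The cases involving $t=1$ are immediate because $X(1)=0$.

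The main obstacle is continuity of the sample paths, and specifically at $t=1$, where the time change sends $Z$ out to infinity. On $[0,1)$ continuity is clear since $Z$ has continuous paths and $t\mapsto\big((1-t),\,t/(1-t)\big)$ is continuous. At the endpoint I would invoke the strong law of large numbers for Brownian motion, namely $Z(s)/s\to 0$ almost surely as $s\to\infty$. Substituting $s=t/(1-t)$ gives $1-t=1/(1+s)$, so
$$X(t) = \frac{1}{1+s}\,Z(s) = \frac{s}{1+s}\cdot\frac{Z(s)}{s} \xrightarrow[s\to\infty]{} 0 \quad\text{a.s.},$$
i.e. $X(t)\to 0 = X(1)$ almost surely as $t\uparrow 1$. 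Hence $X$ has an almost surely continuous modification on all of $[0,1]$ with the finite-dimensional distributions computed above, which identifies it as a standard Brownian bridge and completes the proof.
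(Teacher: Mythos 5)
Your proposal is correct and takes essentially the same route as the paper, which disposes of this lemma with a one-line remark that it ``can easily be verified by checking that the covariance of $X$ agrees with that of a Brownian bridge (and observing that $X$ retains the Gaussian character of $Z$)'' --- precisely the Gaussianity and covariance computation $\cov(X(s),X(t)) = s(1-t)$ for $s \le t$ that you carry out. Your extra step establishing almost sure path continuity at $t=1$ via the law of large numbers $Z(s)/s \to 0$ is a detail the paper leaves implicit, and you handle it correctly.
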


This elementary and standard lemma can easily be verified by checking that the covariance of $X$ agrees with that of a Brownian bridge (and observing that $X$ retains the Gaussian character of $Z$).

We can now start the proof of Theorem \ref{thm:BB}.

\begin{proof}[Proof of Theorem \ref{thm:BB}]
First of all, note that by the scaling relation it is enough to prove the theorem for $I= [0,1]$. Consider the process
$$
W(t) := (1+t)X^{[0,1]}\left(\frac{t}{1+t}\right) \quad ; \quad t \in [0,\infty).
$$
In view of Lemma \ref{lem:bb} it suffices to show that $W$ is a multiple of Brownian motion.

We first claim that $(W(t))_{t\ge 0}$ {has independent increments}. Indeed, note that from the domain Markov property (applied to the interval $[s/(1+s), 1] \subset [0,1]$), we have for all $s<t$:
$$X^{[0,1]}(\frac{t}{1+t})  = X^{[0,1]}\left(\frac{s}{1+s}\right)\frac{1+s}{1+t}  + \tilde X^{[s/(1+s),1]}\left(\frac{t}{1+t}\right),$$
where $\tilde X^{[s/(1+s),1]}$ is a copy of $X^{[s/(1+s),1]}$ that is independent of $(X^{[0,1]}(u), u \le s/(1+s))$.
Then the above equation implies that
\begin{equation}
\label{eqn::wii}
W(t )  = W(s)+  (1+t) \tilde X^{[s/(1+s),1]} \left(\frac{t}{t+1}\right) \end{equation}
which proves {the claim}, since $\tilde X$ is independent of $(W(u), u \le s)$.

Now observe that by Corollary \ref{C:contmod}, $W$ admits a continuous modification. Moreover, since $X^{[0,1]}$ has zero mean (as already observed in the proof of Lemma \ref{lem:local_BM}), we see that $W$ is a martingale.

Finally, note that by \eqref{eqn::wii} and translation/scaling invariance, we can write
$$\E[(W(t) - W(s))^2] = (t-s) \frac{1+t}{1+s}\E[X^{[0,\frac{1+t}{t-s}]}(1)^2].
$$
We have already noted in the proof of Lemma \ref{lem:local_BM} that $\var(X^{[0,T]}(1))$ increases towards $\sigma^2$ as $T \to \infty$.
Hence,
letting $s \to t $, we obtain that
$$\E[ (W(t) - W(s))^2] \sim \sigma^2 (t-s)
$$
in the sense that the ratio of the two sides tends to 1 as $s \to t \ge 0$. Since $W$ has independent increments, we conclude that the quadratic variation of the continuous modification of $W$ is given by
$$ \langle W\rangle_t = \sigma^2 t.$$
Moreover we have $W(0) = 0$ a.s. Therefore, by L\'evy's characterisation of Brownian motion, we see that \[(W(t))_{t\geq 0} \eqd \sigma (B(t))_{t\geq 0},\] where $(B(t))_{t\geq 0}$ is a standard Brownian motion and $\sigma$ is the constant from Lemma \ref{lem:local_BM}. Thus
$$
X^{[0,1]}(t) \eqd \sigma(1-t)B\left(\frac{t}{1-t}\right) \quad ; \quad t \in  [0,1]
$$
which is an equivalent definition of a constant $\sigma$ times a Brownian Bridge in $[0,1]$ by Lemma \ref{lem:bb}.
\end{proof}

\section{Open problems}
\label{S:problems}

We end this article with a few open questions raised by our results. The most obvious ones are the following two:

\begin{problem}\label{Problem:moments}
  Is Theorem \ref{thm::characterisation_gff} true assuming only $\E[(h, \ph)^2] < \infty$ instead of $\E[(h, \ph)^4]< \infty$? Is it true without any moment assumption at all?
  \end{problem}

\begin{problem}\label{prob: high dimensions}
  Does an analogue of Theorem \ref{thm::characterisation_gff} hold in dimensions $d\ge 3$ (and if so, under what natural assumptions)?
\end{problem}

For Problem \ref{Problem:moments}, we believe that no moment assumptions (or perhaps only very weak moment assumptions) are necessary for the theorem to hold. In this direction, we were able to prove that certain averages of the field are Gaussian with moments assumption no stronger than Theorem \ref{thm:BB}. This is the case if we consider  a realisation of the It\^o \emph{excursion} measure in the upper half plane starting from zero (i.e., a process whose real coordinate is a Brownian motion, and whose imaginary coordinate is a sample from one-dimensional It\^o measure), and consider the hitting distribution by this process of a semicircle of radius $r$ centered at zero. Equivalently, this is the derivative at zero of the the harmonic measure on a semi-circle of radius $r$ centered at zero. Indeed, it can be shown that the field integrated against this measure is a time-change of Brownian motion (as a function of the radius). This is because there are martingale, Markovian properties together with scaling properties, which are sufficient to characterise Brownian motion. While this argument is very suggestive that no moments assumptions are needed, we could not exploit this (and so have chosen not to include a proof).


\medskip This makes it likely that no heavy-tailed analogue version of the GFF can exist if we insist on conformal invariance. Nevertheless it is interesting to try and investigate what are natural analogues (if any) of the GFF such that the integral against test function gives a heavy-tailed random variable.

\begin{problem}\label{Problem:stable}
  Does there exist a ``natural" stable version of the GFF?
\end{problem}

Let us give more details about what we mean in this question. In this paper, the domain Markov property is formulated in terms of harmonic functions, but in the context of Problem \ref{Problem:stable} it seems clear the notion of Markov property needs to be changed.
Indeed, one might hope that by adapting this definition of this hypothetical process to the one-dimensional case, one would recover the bridge of a stable L\'evy process, about which very little seems in fact to be known in general (see e.g. the recent paper \cite{ChaumontUB} for some basic properties). In particular, there does not seem to be an explicit relation between a stable bridge from 0 to 0 of duration one and a stable bridge from $a$ to $b$ of same duration for arbitrary values of $a,b$. This suggests that if a natural stable version of a GFF exists, it may be characterised by a more complex Markov property.

A natural way to ask the question precisely would be to try and discretise the problem, by considering the Ginzburg--Laundau $\nabla \varphi$ interface model. That is, for a domain $D \subset \C$, consider $D^{\delta}$ a fine mesh lattice approximation of $D$. On $D^\delta$, consider the random function $h^\delta$ defined on the vertices of $D^\delta$ through the law
$$
\P(  dh^\delta) \propto \prod_{x \sim y} \Phi\left( h^\delta(x) - h^\delta(y)\right) \prod_{x \in D^\delta } d h^\delta (x)
$$
where $\prod_x dh(x)$ is the product Lebesgue measure on $\R$ for all vertices in the graph, and $V$ is some fixed symmetric nonnegative function which decays to zero sufficiently fast that the total mass of the measure is finite. A priori this only defines a law up to a global additive constant, which can be fixed by requiring $h^\delta(x_0) = 0$ at some fixed vertex $x_0 \in D^\delta$. Then the question is to identify the limit (if it exists) as $\delta \to 0$ of the height function $h^\delta$, extended in some natural way to all of $D$ and viewed as a random distribution on $D$. Moreover, one can ask how the limit depends on the choice of $\Phi$. When $\Phi$ decays very fast at infinity (say if $\Phi$ is supported on a bounded interval) it is expected -- but not proved -- that the limit is a Gaussian free field. This is currently known only in the case where we can write $\Phi = e^{- V}$ for $V$ uniformly convex and $V''$  a Lipschitz function: see Miller \cite{MillerGL}, who relied on earlier work of Giacomin, Olla and Spohn \cite{GOS} and Naddaf and Spencer \cite{NS} for the analogous result in the full plane. However the case of bounded support remains wide open at the moment. To formulate the above problem concretely, we ask what happens when $\Phi$ is heavy-tailed: in particular, does the limit as $\delta \to 0$ exist? If so, what sort of Markov property does it satisfy?

\medskip In another direction, it is not entirely clear how to characterise other versions of the GFF in a similar way. For instance:

\begin{problem}
  What is the analogue of Theorem \ref{thm::characterisation_gff} for a GFF with free boundary conditions?
\end{problem}

(See e.g. \cite{LQGNotes} for a definition of the GFF with free (or Neumann) boundary conditions.)

\medskip Another natural family of random fields which arises naturally are the so-called fractional Gaussian fields (FGF for short), see \cite{lodhia2016} for a definition and survey of basic properties. Roughly, they are defined as $(-\Delta)^{-s/2} W$ where $W$ is white noise on $\R^d$, and $(-\Delta)^{-s/2}$ is the fractional Laplacian for a given $s \in \R$. By contrast with the hypothetical ``stable" GFF discussed above, FGFs can be seen as Gaussian free fields with long range interactions (see section 12.2 in \cite{lodhia2016}).
This includes the Gaussian free field (corresponding to $s=1$) and many other natural Gaussian fields. It turns out that FGFs enjoy a Markov decomposition similar to that of the GFF, where the notion of harmonic function is replaced by the notion of $s$-harmonic function (i.e., harmonic with respect to the fractional Laplacian $(-\Delta)^s$, see Proposition 5.4 in \cite{lodhia2016}). However, note that the fractional Laplacian is a nonlocal operator so this Markov decomposition is not a Markov property in the usual sense: the conditional law of the field given the values outside of some domain $U$ depend on more than just the boundary values.

In dimension two, FGFs are not conformally invariant at least in the sense of this article except if $s = 1$, since in general for a given $a \in \R$, we have $h(ax) = a^{s- d/2} h(x)$ in distribution (see below (3.4) in \cite{lodhia2016}). Nevertheless, it is natural to ask:

\begin{problem}
What properties characterise fractional Gaussian fields for a given $s \in \R$ ?
\end{problem}

\medskip Finally,
it is natural to ask what can be said on a given Riemann surface. In this case, the field $h$ should also have an ``instanton'' component, which describes the amount of height that one picks up as one makes a noncontractible loop over the surface. It is natural to allow this quantity to be nonzero in general, and to depend only on the equivalence class of the loop (for the homotopy relation). In the language of forms, this means that $\nabla h$ will be a closed one-form but not exact.

Characterising conformally invariant random fields with a natural Markov property would be particularly interesting because (a) there exists more than one natural field in this context (e.g., there is at least the standard Gaussian free field with mean zero as well as the so-called \emph{compactified GFF} which arises as the scaling limit of the dimer model on the torus, see \cite{dubedat_torsion} and \cite{BLRtorus}); and (b) in the context of the dimer model, there are natural situations (see again \cite{BLRtorus}) where a conformally invariant scaling limit is obtained but its law is unknown. Hence it would be of great interest to prove an analogue of Theorem \ref{thm::characterisation_gff} in the context of Riemann surfaces.

\begin{problem}
 Characterise fields on a given Riemann surface (including an instanton component) which enjoy a domain Markov property and conformal invariance.
\end{problem}

\small

\bibliographystyle{abbrv}
\bibliography{RGM}

\bigskip
\noindent
{\sc Nathanael Berestycki}, {\em Universit\"at Wien}\footnote{On leave from the University of Cambridge}, {\tt <nathanael.berestycki@univie.ac.at>}\\
{\sc Ellen Powell}, {\em ETH Z\"{u}rich}, {\tt <egpowell12@gmail.com>}\\
{\sc Gourab Ray}, {\em University of Victoria}, {\tt <gourabray@uvic.ca>} \\

\end{document}